\def\row#1/#2!{#1_{\IfStrEq{#2}{}{n}{#2}} & \dynkin{#1}{#2}\\}
\newcommand{\tble}[1]{
   \renewcommand*\do[1]{\row##1!}
   \[
      \begin{array}{ll}\docsvlist{#1}\end{array}
   \]
}
\definecolor{YK}{rgb}{9,0,0}
\definecolor{YKb}{rgb}{0,0,9}
 \date{\vspace{-4ex}}
\begin{document}



\setlength{\parindent}{5mm}
\renewcommand{\leq}{\leqslant}
\renewcommand{\geq}{\geqslant}
\newcommand{\N}{\mathbb{N}}
\newcommand{\sph}{\mathbb{S}}
\newcommand{\Z}{\mathbb{Z}}
\newcommand{\R}{\mathbb{R}}
\newcommand{\Q}{\mathbb{Q}}
\newcommand{\C}{\mathbb{C}}
\newcommand{\curlL}{\mathcal{L}}
\newcommand{\A}{\mathcal{A}}
\newcommand{\curlP}{\mathcal{P}}
\newcommand{\F}{\mathbb{F}}
\newcommand{\g}{\mathfrak{g}}
\newcommand{\h}{\mathfrak{h}}
\newcommand{\K}{\mathbb{K}}
\newcommand{\RN}{\mathbb{R}^{2n}}
\newcommand{\ci}{c^{\infty}}
\newcommand{\derive}[2]{\frac{\partial{#1}}{\partial{#2}}}
\renewcommand{\S}{\mathbb{S}}
\renewcommand{\H}{\mathbb{H}}
\newcommand{\eps}{\varepsilon}
\newcommand{\lag}{Lagrangian}
\newcommand{\sub}{submanifold}
\newcommand{\homo}{homogeneous}
\newcommand{\qmor}{quasimorphism}
\newcommand{\enum}{enumerate}
\newcommand{\sa}{symplectically aspherical}
\newcommand{\ovl}{\overline}
\newcommand{\wt}{\widetilde}
\newcommand{\hamd}{Hamiltonian diffeomorphism}
\newcommand{\QH}{quantum cohomology ring}
\newcommand{\thm}{Theorem}
\newcommand{\cor}{Corollary}
\newcommand{\hamil}{Hamiltonian}
\newcommand{\propo}{Proposition}
\newcommand{\conjec}{Conjecture}
\newcommand{\asympt}{asymptotic}
\newcommand{\PR}{pseudo-rotation}
\newcommand{\fuk}{\mathscr{F}}
\newcommand{\specinv}{spectral invariant}
\newcommand{\wrt}{with respect to}
\newcommand{\suphv}{superheavy}
\newcommand{\suphvness}{superheaviness}
\newcommand{\symp}{symplectic}
\newcommand\oast{\stackMath\mathbin{\stackinset{c}{0ex}{c}{0ex}{\ast}{\bigcirc}}}
\newcommand{\diffeo}{diffeomorphism}
\newcommand{\quasi}{quasimorphism}
\newcommand{\univham}{\widetilde{\Ham}}
\newcommand{\BK}{Biran--Khanevsky}
\newcommand{\BC}{Biran--Cornea}
\newcommand{\TV}{Tonkonog--Varolgunes}
\newcommand{\varol}{Varolgunes}
\newcommand{\mfd}{manifold}
\newcommand{\smfd}{submanifold}
\newcommand{\EP}{Entov--Polterovich}
\newcommand{\pol}{Polterovich}
\newcommand{\suppot}{superpotential}
\newcommand{\CY}{Calabi--Yau}
\newcommand{\degen}{degenerate}
\newcommand{\coeff}{coefficient}
\newcommand{\acs}{almost complex structure}
\newcommand{\prl}{pearl trajectory}
\newcommand{\prls}{pearl trajectories}
\newcommand{\holo}{holomorphic}
\newcommand{\trans}{transversality}
\newcommand{\pert}{perturbation}
\newcommand{\idem}{idempotent}
\newcommand{\critpt}{critical point}
\newcommand{\tordeg}{toric degeneration}
\newcommand{\nbhd}{neighborhood}
\newcommand{\polar}{polarization}
\newcommand{\symplecto}{symplectomorphism}
\newcommand{\symplectotic}{symplectomorphic}
\newcommand{\FOOO}{Fukaya--Oh--Ohta--Ono}
\newcommand{\quadr}{quadric hypersurface}
\newcommand{\monoconst}{monotonicity constant}
\newcommand{\constr}{construction}
\newcommand{\coord}{coordinate}
\newcommand{\decomp}{decomposition}
\newcommand{\hypsurf}{hypersurface}
\newcommand{\NNU}{Nishinou--Nohara--Ueda}
\newcommand{\transv}{transversality}
\newcommand{\fibr}{fibration}
\newcommand{\atf}{almost toric fibration}
\newcommand{\degtion}{degeneration}
\newcommand{\config}{configuration}
\newcommand{\sing}{singular}
\newcommand{\lagsph}{Lagrangian sphere}
\newcommand{\splgen}{split generation}
\newcommand{\wlg}{without loss of generality}
\newcommand{\AG}{algebraic geometry}
\newcommand{\linindep}{linearly independent}
\newcommand{\RDP}{rational double point}
\newcommand{\grass}{Grassmannian}

\theoremstyle{plain}
\newtheorem{theo}{Theorem}
\newtheorem{theox}{Theorem}
\renewcommand{\thetheox}{\Alph{theox}}
\numberwithin{theo}{subsection}
\newtheorem{prop}[theo]{Proposition}
\newtheorem{lemma}[theo]{Lemma}
\newtheorem{definition}[theo]{Definition}
\newtheorem*{notation*}{Notation}
\newtheorem*{notations*}{Notations}
\newtheorem{corol}[theo]{Corollary}
\newtheorem{conj}[theo]{Conjecture}
\newtheorem{guess}[theo]{Guess}
\newtheorem{claim}[theo]{Claim}
\newtheorem{question}[theo]{Question}
\newtheorem{prob}[theo]{Problem}
\numberwithin{equation}{subsection}

\newenvironment{demo}[1][]{\addvspace{8mm} \emph{Proof #1.
    ~~}}{~~~$\Box$\bigskip}

\newlength{\espaceavantspecialthm}
\newlength{\espaceapresspecialthm}
\setlength{\espaceavantspecialthm}{\topsep} \setlength{\espaceapresspecialthm}{\topsep}

\newenvironment{example}[1][]{\refstepcounter{theo} 
\vskip \espaceavantspecialthm \noindent \textsc{Example~\thetheo
#1.} }%
{\vskip \espaceapresspecialthm}

\newenvironment{remark}[1][]{\refstepcounter{theo} 
\vskip \espaceavantspecialthm \noindent \textsc{Remark~\thetheo
#1.} }%
{\vskip \espaceapresspecialthm}

\def\bb#1{\mathbb{#1}} \def\m#1{\mathcal{#1}}

\def\momeg{(M,\omega)}
\def\co{\colon\thinspace}
\def\Homeo{\mathrm{Homeo}}
\def\Diffeo{\mathrm{Diffeo}}
\def\Symp{\mathrm{Symp}}
\def\Sympeo{\mathrm{Sympeo}}
\def\id{\mathrm{id}}
\newcommand{\norm}[1]{||#1||}
\def\Ham{\mathrm{Ham}}
\def\lagham#1{\mathcal{L}^\mathrm{\Ham}({#1})}
\def\Hamtilde{\widetilde{\mathrm{\Ham}}}
\def\cOlag#1{\mathrm{Sympeo}({#1})}
\def\Crit{\mathrm{Crit}}
\def\diag{\mathrm{diag}}
\def\Spec{\mathrm{Spec}}
\def\osc{\mathrm{osc}}
\def\Cal{\mathrm{Cal}}
\def\Ker{\mathrm{Ker}}
\def\Hom{\mathrm{Hom}}
\def\FS{\mathrm{FS}}
\def\tor{\mathrm{tor}}
\def\Int{\mathrm{Int}}
\def\PD{\mathrm{PD}}
\def\Spec{\mathrm{Spec}}
\def\momeg{(M,\omega)}
\def\co{\colon\thinspace}
\def\Homeo{\mathrm{Homeo}}
\def\Hameo{\mathrm{Hameo}}
\def\Diffeo{\mathrm{Diffeo}}
\def\Symp{\mathrm{Symp}}
\def\Sympeo{\mathrm{Sympeo}}
\def\id{\mathrm{id}}
\def\Im{\mathrm{Im}}
\def\Ham{\mathrm{Ham}}
\def\lagham#1{\mathcal{L}^\mathrm{Ham}({#1})}
\def\Hamtilde{\widetilde{\mathrm{Ham}}}
\def\cOlag#1{\mathrm{Sympeo}({#1})}
\def\Crit{\mathrm{Crit}}
\def\dim{\mathrm{dim}}
\def\Spec{\mathrm{Spec}}
\def\osc{\mathrm{osc}}
\def\Cal{\mathrm{Cal}}
\def\Fix{\mathrm{Fix}}
\def\det{\mathrm{det}}
\def\Ker{\mathrm{Ker}}
\def\coker{\mathrm{coker}}
\def\Per{\mathrm{Per}}
\def\rank{\mathrm{rank}}
\def\Span{\mathrm{Span}}
\def\Supp{\mathrm{Supp}}
\def\Hof{\mathrm{Hof}}
\def\grad{\mathrm{grad}}
\def\ind{\mathrm{ind}}
\def\Hor{\mathrm{Hor}}
\def\Vert{\mathrm{Vert}}
\def\Re{\mathrm{Re}}

\title{Isolated hypersurface singularities, spectral invariants, and quantum cohomology}
\author{Yusuke Kawamoto}

\newcommand{\Addresses}{{
  \bigskip
  \footnotesize

   \textsc{Yusuke Kawamoto, Institute for Mathematical Research (FIM), R\"amistrasse 101, 8092 Z\"urich Switzerland}\par\nopagebreak
  \textit{E-mail address}: \texttt{yusukekawamoto81@gmail.com, yusuke.kawamoto@math.ethz.ch} }}

\maketitle

\begin{abstract}
We study the relation between isolated hypersurface singularities (e.g. ADE) and the quantum cohomology ring by using spectral invariants, which are symplectic measurements coming from Floer theory. We prove, under the assumption that the quantum cohomology ring is semi-simple, that (1) if the smooth Fano variety degenerates to a Fano variety with an isolated hypersurface singularity, then the singularity has to be an $A_m$-singularity, (2) if the symplectic manifold contains an $A_m$-configuration of Lagrangian spheres, then there are consequences for the Hofer geometry, and that (3) the Dehn twist reduces spectral invariants.
\end{abstract}

\tableofcontents

\section{Introduction}

\subsection{Context}\label{context}


Degeneration is a theme that originates in classical algebraic geometry that is still actively studied in the context of various modern topics such as the minimal model program \cite{[KM98]}, K\"ahler--Einstein metric \cite{[DK01]}, mirror symmetry, and the SYZ conjecture \cite{[Gr13]}. Its importance in {\symp} topology was noticed by Arnold \cite{[Arn95]} and Donaldson \cite{[Don00]}, especially that {\lag}s can appear as vanishing cycles. Seidel largely developed this idea and obtained various results on the {\symp} aspect of the Dehn twist \cite{[Sei97],[Sei99],[Sei00],Sei08}. In this paper, we study degeneration by {\symp} topology, and vice versa. Note that such an attempt was also made by Biran which was highlighted in his ICM address \cite[Section 5.2]{[Bir02]}. 

Understanding the type of {\sing}ities an algebraic variety can {\degen} to is an important subject in algebraic geometry. \textit{Isolated hypersurface {\sing}ities} have been fundamental in the study of {\sing}ities since \cite{[Mil68]}. Any isolated hypersurface {\sing}ity can be associated to a positive integer called \textit{modality}, which roughly speaking, expresses the complexity of the {\sing}ity. Arnold classified isolated hypersurface {\sing}ities up to modality two \cite{[Arn76],[AGLV93]} and according to his classification, the ones with modality zero are called the \textit{simple (ADE) {\sing}ities}. Up to right equivalence, $A_m$, $D_m\ (m\geq 4)$, and $E_m\ (m=6,7,8)$ {\sing}ities are given as
$$x_1 ^2 + x_2 ^2 + \cdots + x_{n-1} ^2+ p(y,z)=0$$
where $p(y,z)$ is 
$$y^2 + z^{m+1},\ 
     y z ^2 +     z ^{m-1} ,\
 y ^3  +z ^{4} ,\
y ^3 + y z ^{3} ,\
 y^3 + z ^{5} ,$$
respectively. The vanishing cycle of an A, D, E {\sing}ity forms a {\config} of {\lagsph}s that intersect as expressed in the A, D, E type Dynkin diagram (Figure \ref{ADE dynkin diagram}), respectively. We call these {\config}s of {\lagsph}s ADE {\config}s.

\begin{figure}[h]
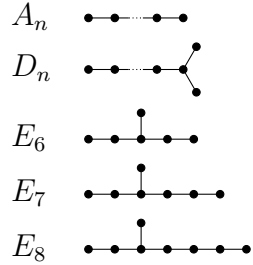

    \tble{A/{},D/{},E/6,E/7,E/8}
    \caption{Dynkin diagrams of type $A_{n},D_{n},E_6,E_7,E_8$.}
    \label{ADE dynkin diagram}
\end{figure}

 Isolated hypersurface {\sing}ities of modality one consists of three types, namely the \textit{parabolic (or simple elliptic) {\sing}ities} $\wt{E}_{6},\wt{E}_{7},\wt{E}_{8}$, \textit{hyperbolic {\sing}ities} $T_{p,q,r}$, and \textit{14 exceptional {\sing}ities}. Similarly to the simple {\sing}ities, all the isolated hypersurface {\sing}ities of positive modality also give rise to {\config}s of {\lagsph}s by taking the vanishing cycles. We refer to Section \ref{hypsing} for further information.

Note that for surfaces, i.e. complex dimension two, simple {\sing}ities have many different characterizations such as du Val {\sing}ities, rational double points, Kleinian {\sing}ities \cite{[Rei]}. For this case, algebraic geometers have a fairly good understanding of {\degtion}s. In fact, for Fano surfaces, i.e. the del Pezzo surfaces, du Val classified all the possible simple singularities that can occur on {\sing} del Pezzo surfaces \cite{[DV34]}. On the other hand, very little is known for higher dimensional spaces, and the importance of studying the higher dimensional case is emphasized by Arnold in \cite{[Arn95]}.

Another object that has been of interest to both algebraic and {\symp} geometers is the \textit{quantum cohomology}. After its introduction in string theory by Vafa and Witten \cite{[Vaf91],[Wit91]}, the algebro-geometric formulation was found by Kontsevich--Manin in \cite{[KM94]}, shortly followed by the {\symp} formulation in \cite{[RT95]} due to Ruan--Tian. An important case is when the {\QH} is semi-simple; the (small) quantum cohomology ring\footnote{In this paper, {\QH} will always refer to the small one unless mentioned otherwise. See Section \ref{Semi-simplicity of QH} for further comments on different notions of semi-simplicity for small and big {\QH}s.} $QH(X,\omega)$ of a {\symp} {\mfd} $(X,\omega)$ is semi-simple if it splits into a direct sum of finitely many fields $\{Q_j\}_{1 \leq j \leq k}$:
$$QH(X,\omega) = \bigoplus_{1 \leq j \leq k} Q_j.$$
Examples of {\symp} {\mfd}s equipped with monotone {\symp} forms\footnote{Recall that a {\symp} {\mfd} $(X,\omega)$ is monotone if we have
$[\omega]|_{\pi_2(X)} = \kappa_X \cdot c_1(TX)|_{\pi_2(X)}$ for some $\kappa_X>0$.} that have semi-simple {\QH}s over $\C$-{\coeff}s\footnote{Note that semi-simplicity depends on the choice of {\coeff}s. For example, over $\F_p$, a field of characteristic $p$, the {\QH} of $\C P^n$ is not semi-simple when $p$ divides $n+1$. In this paper, we work over $\C$-{\coeff}s.} include 
\begin{itemize}
    \item the complex projective space $\C P^n$ \cite{[MS04]},
    \item the quadric hypersurfaces $Q^n$ \cite{[Abr00]},
    \item the del Pezzo surfaces $\mathbb{D}_k:=\C P^2 \# k ( \overline{\C P^2})$ with $0 \leq k \leq 4$ (where $k$ is the number of the points blown up in such a way that the {\symp} {\mfd} become monotone) \cite{[BM04],[CM95]},
    \item  the complex Grassmannians $Gr_{\C} (k,n)$ (i.e. the space of complex $k$ dimensional linear subspace in $\C ^n$) \cite{[Abr00]},
    \item some homogeneous spaces \cite{[CMP10],[Per]} and some generalized {\grass}s \cite{[Gra]},
    \item products of monotone {\symp} {\mfd}s with semi-simple {\QH}s \cite{[EP08]}.
\end{itemize}

In addition to the above, examples of {\symp} {\mfd}s equipped with generic {\symp} forms that have semi-simple {\QH}s include 

\begin{itemize}
    \item any {\symp} toric Fano {\mfd} \cite{[FOOO10],[OT09],[Ush11]},

    \item 36 out of the 59 Fano 3-folds with no odd rational cohomology \cite{[Cio05]},

    \item one-point blow up of any of the above \cite{[Ush11]}.
\end{itemize}

   The semi-simplicity of the (small/big) {\QH} has also shared interest in algebraic and {\symp} geometry; for example, see \cite{[BM04],[Dub96], [KM94]} for the algebraic geometry side, \cite{[EP03],[EP08],[EP09],[BC],[FOOO10]} for the symplectic side. See Section \ref{Semi-simplicity of QH} for more information on the above examples and the relation between different notions of semi-simplicity. The semi-simplicity also has important implication for physics; we refer the readers to \cite{[HKK$^+$]}.

In this paper, we study the interaction between isolated {\hypsurf} singularities and the semi-simplicity of quantum cohomology rings, which are both objects lying in the intersection of algebraic and {\symp} geometry. Our method is based on Floer theory, more precisely the theory of {\specinv}s, which allows us to have less dimensional restrictions than in the current algebraic geometry.

\subsection{Isolated {\hypsurf} {\sing}ities and quantum cohomology}

We have seen in the previous section that the theories of {\sing}ities and the quantum cohomology are both of interest to algebraic and symplectic geometers. However, the interaction between the two theories has not been studied.\footnote{The author thanks Kaoru Ono and Rahul Pandharipande for pointing this out.} Our first main result, which can be formulated in terms of algebraic and {\symp} geometry, claims that semi-simplicity of the quantum cohomology ring excludes most of the isolated {\hypsurf} {\sing}ities. We first state the algebro-geometric version.

     \begin{theox}[{Algebraic geometry version}]\label{no DE config}
    Let $X$ be a complex $n$ dimensional smooth Fano variety. Assume $QH(X,\omega)$ is semi-simple, where $\omega$ is the anti-canonical form of $X$. If $X$ degenerates to a Fano variety with an isolated {\hypsurf} {\sing}ity, then the {\sing}ity has to be
    \begin{itemize}
        \item  an $A_m$-{\sing}ity with $m\geq 1$, if $n$ is even.

        \item  an $A_m$-{\sing}ity with $m= 1,2$, if $n$ is odd and $\frac{\dim_\C X +1}{2r_X} \notin \Z$ where $r_X$ is the Fano index.
    \end{itemize}
    
    \end{theox}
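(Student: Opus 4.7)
The plan is to reduce the statement to a {\symp} question about {\config}s of {\lagsph}s in $(X,\omega)$ and then use semi-simplicity of $QH(X,\omega)$, via the theory of {\specinv}s, to rule out every non-$A_m$ {\config}. Given a one-parameter {\degtion} of $X$ to a Fano variety $X_0$ with an isolated {\hypsurf} {\sing}ity at $p\in X_0$, the classical theory of {\symp} vanishing cycles---going back to Arnold and developed further by Seidel---attaches to the Milnor fibre of $p$ a {\config} of {\lagsph}s in the generic fibre $X$ whose mutual intersection pattern reproduces the Coxeter--Dynkin diagram of the {\sing}ity. A case-by-case inspection of these diagrams shows that, outside the $A_m$ family, every one contains a $D_4$-type subconfiguration: a central {\lagsph} $L_0$ together with three pairwise disjoint {\lagsph}s $L_1,L_2,L_3$, each meeting $L_0$ transversely in a single point. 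It therefore suffices to rule out a $D_4$ {\config} of {\lagsph}s in a monotone smooth Fano whose {\QH} is semi-simple.

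For that exclusion, I would employ the {\EP} machinery attached to the idempotent {\decomp} $QH(X,\omega)=\bigoplus_j Q_j$. Each field factor $Q_j$ produces a partial {\quasi} $\mu_{e_j}$ on $\widetilde{\Ham}(X,\omega)$ together with a notion of $e_j$-{\suphvness}, whose crucial property is that any two $e_j$-{\suphv} subsets sharing the same idempotent $e_j$ must intersect. The technical core is to attach, to every {\lagsph} $L\subset X$ whose self-Floer cohomology is nonzero, a well-defined idempotent type $e_j(L)$ for which $L$ is $e_j(L)$-{\suphv}; this comes out of the closed--open map together with the field property of $Q_j$. The rigidity of the assignment $L\mapsto e_j(L)$ under Dehn twists, which is essentially item (3) of the paper's main results on reduction of {\specinv}s under $\tau_L$, then forces $L_1,L_2,L_3$ to carry the same idempotent type as $L_0$. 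But $L_1,L_2,L_3$ are pairwise disjoint, contradicting the intersection property of $e_j$-{\suphv} subsets and giving the desired exclusion.

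The parity-sensitive restriction in the odd $n=\dim_\C X$ case reflects that a {\lagsph} $S^n$ with odd $n$ has $\chi=0$, so its self-Floer cohomology can a priori vanish; the hypothesis $\frac{n+1}{2r_X}\notin\Z$ is precisely the Maslov-degree obstruction isolating the two generators of $H^*(S^n)$ from the $2r_X$-periodic Floer differential, and the same degree analysis shows that $A_m$-chains with $m\geq 3$ produce a second, independent spectral constraint of the same shape as the $D_4$ one, leaving $A_1$ and $A_2$ as the only survivors. The main obstacle I anticipate is the rigidity statement attaching a well-defined idempotent type to a {\lagsph} $L$ and propagating it to the image $\tau_{L_0}(L)$: this is the point where semi-simplicity of $QH(X,\omega)$, the closed--open map, and the field property of the $Q_j$'s must be synthesised, and it is where the Floer-theoretic weight of the proof is concentrated.
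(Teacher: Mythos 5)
Your plan has the right overall shape (reduce to a symplectic exclusion and use idempotents plus superheaviness), but the mechanism you propose in the even-$n$ case would not work, and the odd-$n$ case is handled by a different exclusion than the $D_4$ one.

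For even $n$, the crucial point — which your proposal misses — is that a monotone Lagrangian sphere $L$ with $[L]\neq 0$ does \emph{not} carry a single well-defined idempotent type $e_j(L)$: via the Biran--Membrez cubic equation $[L]^3 = 4\beta_L[L]$ with $\beta_L\neq 0$, it carries a \emph{pair} of orthogonal idempotents $e^L_{\pm}$, and $L$ is superheavy with respect to both. The exclusion of a $D_4$-configuration is then a pigeonhole argument, not a ``same type'' argument. The paper's Lemma \ref{lemma spheres idemp} shows that $[L]\ast[L']\neq 0$ forces the two-element sets $\{e^L_+,e^L_-\}$ and $\{e^{L'}_+,e^{L'}_-\}$ to overlap; applying this to the central sphere $S$ of a $D_4$ and its three neighbours $S_1,S_2,S_3$, each $S_i$ must share one of $S$'s two idempotents, but since there are three $S_i$ and only two idempotents of $S$, and $S_1,S_2,S_3$ are pairwise disjoint (hence cannot share any idempotent with each other by {\propo} \ref{suphv constant}), one gets a contradiction. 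Your claim that ``$L_1,L_2,L_3$ carry the same idempotent type as $L_0$'' is not true and is not what drives the proof; in fact $S_1$ and $S_2$ necessarily share \emph{different} idempotents of $S$. Moreover, the Dehn twist rigidity statement ({\thm} \ref{Dehn twist spec inv}) is not an ingredient in the proof of {\thm} \ref{no DE config}; the needed rigidity is the algebraic Lemma \ref{lemma spheres idemp}, proved directly from $[L]\ast[L']\neq 0$, the idempotent identity $\alpha^2=\alpha$, and the field-factor property of $e^L_\pm$ (Claim \ref{sphere unit field factor}).

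For odd $n$ with $\frac{n+1}{2N_X}\notin\Z$, the argument excludes an $A_3$-configuration, not a $D_4$. There, $HF(L)$ has $1_L$ as its only idempotent (since $p_L^2=0$ by degree), so $\mathcal{CO}^0$ being a unital ring map isolates a unique field-factor unit $e^L$ with $\mathcal{CO}^0(e^L)=1_L$; an $A_2$-pair forces $e^L=e^{L'}$, and then an $A_3$-chain forces $e^{L_1}=e^{L_2}=e^{L_3}$ while $L_1\cap L_3=\emptyset$, a contradiction. Your description of this as ``a second, independent spectral constraint of the same shape as the $D_4$ one'' does not reflect the actual structure, which is genuinely simpler (one idempotent per sphere, $A_3$ forbidden directly). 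Finally, the reduction from algebraic to symplectic geometry is not as immediate as ``the vanishing cycles form a configuration in $(X,\omega)$'': one needs the relative anti-canonical bundle on the total space of the degeneration to be $\pi$-relatively ample in a neighbourhood of the origin, so that the smoothing carries a \emph{monotone} symplectic form; the paper establishes this via Gorenstein-ness of the total space and Grothendieck's openness-of-ampleness result (Claim \ref{relative ampleness near origin}). Your proposal omits this step entirely, and it is not automatic.
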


    \begin{remark}\label{Fano symp form}
    Here are some remarks on {\thm} \ref{no DE config}:
    \begin{enumerate}
        \item A smooth Fano variety $X$ carries a natural {\symp} form called the anti-canonical form which comes from the projective embedding $f: X \hookrightarrow \C P^N$ for some $N\in \N$. From the {\symp} perspective, this is a monotone {\symp} form, see \eqref{Fano is monotone}.

        \item It would be very interesting to study the case of other classes of {\sing}ities, for example cyclic quotient {\sing}ities. See Remark \ref{Cyclic quotient remark}.
    
       \item It would be interesting to study the remaining case, i.e. when $n$ is odd and $\frac{\dim_\C X +1}{2r_X} \in \Z$ where $r_X$ is the Fano index. 

       \item {\thm} \ref{no DE config} is expected to be slightly generalized, see {\conjec} \ref{ideal statement}.
    \end{enumerate}
        
    \end{remark}

To prove {\thm} \ref{no DE config}, we reduce it to the following {\thm} \ref{no DE config SG}, which could be regarded as the {\symp} version of {\thm} \ref{no DE config} (We point out that this translation from algebraic geometry to {\symp} topology is not immediate. See Section \ref{From SG to AG} for further information).

    \begin{theox}[{Symplectic topology version}]\label{no DE config SG}
    Let $(X,\omega)$ be a real $2n$ dimensional closed monotone {\symp} {\mfd}. Assume $QH(X,\omega)$ is semi-simple. Then $(X,\omega)$ does not contain 
    \begin{itemize}
        \item a $D_4$-{\config} of {\lagsph}s, if $n$ is even.
        \item an $A_3$-{\config} of {\lagsph}s,  if $n$ is odd and $\frac{n +1}{2N_X} \notin \Z$ where $N_X$ is the minimal Chern number.
    \end{itemize}
    
    \end{theox}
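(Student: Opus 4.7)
The approach combines the \EP{} theory of partial symplectic quasi-states coming from idempotents of a semi-simple {\QH} with Seidel's long exact sequence for Dehn twists, and with the paper's ``Dehn twist reduces {\specinv}s'' principle (item (3) of the abstract). Write $QH(X,\omega)=\bigoplus_{j=1}^{k} Q_j$ for the semi-simple decomposition, and let $e_j\in Q_j$ be the unit of each field factor. Each $e_j$ gives rise to a partial symplectic quasi-state $\zeta_{e_j}$ and a notion of $e_j$-\suphvness{} for compact subsets of $X$; the two facts I will use are that any pair of $e_j$-\suphv{} sets intersect, and that $e_j$-\suphvness{} is preserved by {\hamil} isotopy. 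In the monotone setting every {\lagsph} has non-vanishing Lagrangian Floer cohomology over a suitable Novikov field of characteristic zero, and via the closed--open map it is therefore $e_j$-\suphv{} for at least one $j$.

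The strategy in both cases is to exhibit two disjoint {\lagsph}s in the forbidden {\config} that are forced to be \suphv{} with respect to the \textit{same} $e_j$, yielding the required contradiction. For the $A_3$ case with $n$ odd, denote the {\config} by $L_1\text{--}L_2\text{--}L_3$, so in particular $L_1\cap L_3=\emptyset$. Seidel's exact triangle for the Dehn twist $\tau_{L_2}$ reads
\[
HF(\tau_{L_2}(L_1),L_3)\longrightarrow HF(L_1,L_3)\longrightarrow HF(L_1,L_2)\otimes HF(L_2,L_3)\longrightarrow [1],
\]
with the middle term vanishing by disjointness and each tensor factor one-dimensional (single transverse intersection point). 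This identifies $\tau_{L_2}(L_1)$ with $L_3$, up to a grading shift, in the monotone Fukaya category. The arithmetic hypothesis $\frac{n+1}{2N_X}\notin\Z$ enters precisely to control this $\Z/2N_X$-grading shift and rule out that the shift transfers \suphvness{} to a different idempotent. Combined with the quantitative ``Dehn twist reduces {\specinv}s'' estimate, which transports the $e_j$-\suphvness{} class of $L_1$ to $\tau_{L_2}(L_1)$, one concludes that $L_3$ is \suphv{} for the same $e_j$ as $L_1$, contradicting $L_1\cap L_3=\emptyset$.

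The $D_4$ case with $n$ even proceeds along the same lines, now with a central sphere $L_0$ and three mutually disjoint leaves $L_1,L_2,L_3$. Iterating Seidel's triangle centered at $\tau_{L_0}$ and invoking the $D_4$ braid relations among $\tau_{L_0},\tau_{L_1},\tau_{L_2},\tau_{L_3}$ forces at least two of the three disjoint leaves to share their $e_j$-\suphvness{} class, again violating the intersection property. The parity hypothesis $n$ even is used to ensure that the relevant squares and compositions of Dehn twists are {\hamil} isotopic to the identity, so that the transport of \suphvness{} across Seidel's triangle is actually available.

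The main obstacle is upgrading the Fukaya-categorical identification produced by Seidel's triangle to an identification of \suphvness{} classes at the level of partial quasi-states. This is exactly where the paper's quantitative ``Dehn twist reduces {\specinv}s'' inequality is used in tandem with the arithmetic/parity conditions: the inequality supplies the spectral-invariant comparison that aligns the two quasi-states, while the numerical condition rules out the grading shifts that could in principle reshuffle the idempotents and destroy the final application of the intersection property of \suphv{} sets.
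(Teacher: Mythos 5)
Your proposal diverges substantially from the paper's argument and, as written, has a genuine gap that you yourself flag but do not close. The paper's proof of the even-$n$ case uses no Dehn twists and no exact triangles at all: it observes that a monotone Lagrangian sphere $L$ with $\beta_L\neq 0$ gives two idempotents $e^L_\pm$ that are units of field factors (Claim \ref{sphere unit field factor}), that $L$ is superheavy with respect to \emph{both} $e^L_\pm$ (Lemma \ref{sphere spec inv relation}), and that two cohomologically intersecting spheres must share at least one of their idempotents (Lemma \ref{lemma spheres idemp}). For a $D_4$-configuration the central sphere $S$ has only two idempotents to distribute among its three pairwise-disjoint neighbors $S_1,S_2,S_3$; by pigeonhole, two disjoint $S_i$ are forced to share an idempotent, contradicting Proposition \ref{suphv constant}. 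The odd-$n$ case is similar but simpler: the arithmetic hypothesis $\frac{n+1}{2N_X}\notin\Z$ ensures for degree reasons that $\dim_\Lambda HF(L)=2$ with $p_L^2=0$, so $1_L$ is the only idempotent and each sphere picks out a \emph{single} field-factor unit $e^L$; an $A_2$-pair forces $e^L=e^{L'}$, and an $A_3$-chain then makes the disjoint endpoints superheavy with respect to the same idempotent. Your interpretation of the arithmetic hypothesis as "controlling a $\Z/2N_X$-grading shift across the exact triangle" is not what the paper does with it.

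The concrete gap in your route is the step you label the "main obstacle": Seidel's exact triangle identifies $\tau_{L_2}(L_1)$ as an object of the Fukaya category up to quasi-isomorphism, but that is far weaker than a Hamiltonian isotopy between $\tau_{L_2}(L_1)$ and $L_3$, and superheaviness is not a Fukaya-categorical invariant — it is a pointwise property of the underlying subset of $X$. The paper's Theorem \ref{Dehn twist spec inv}, which you invoke to bridge this, gives only an inequality $\ovl{\ell}_{\tau_L(L')}\leq\max\{\ovl{\ell}_L,\ovl{\ell}_{L'}\}$, which does not transport a superheaviness class; and moreover Theorem \ref{Dehn twist spec inv} is derived in the paper \emph{from} the ingredients of the proof of Theorem \ref{no DE config SG} (Lemmas \ref{sphere spec inv relation}, \ref{lemma spheres idemp}, Proposition \ref{intersection and idempotent}), so using it here inverts the paper's logical order and risks circularity. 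Finally, in the $D_4$ case your appeal to braid relations and to "squares and compositions of Dehn twists being Hamiltonian isotopic to the identity" is not justified — for higher-dimensional spheres Dehn twists typically have infinite order in $\pi_0(\Symp)$ — and even if it were available it would not produce the idempotent-level contradiction that the proof requires. What your proposal is missing is precisely the structural input from Biran--Membrez/Sanda (the cubic equation \eqref{cubic equation}, the idempotents \eqref{sphere idempotents}, Claim \ref{sphere unit field factor}) that turns each sphere into a pair of field-factor units and makes the pigeonhole argument work.
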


   As we pointed out in Section \ref{context}, the classification of {\sing}ities for varieties is extremely important in {\AG} and mainly due to the lack of methods, very little is known about the possible {\sing}ity types for the higher dimensional varieties. {\thm} \ref{no DE config} has no dimensional restriction, as our approach is based on Floer theory.

We now look at relation to other works. 

\textit{Simple {\sing}ities on surfaces.} As we mentioned earlier, isolated {\hypsurf} {\sing}ities that can occur on surfaces are fairly well understood. 
\begin{itemize}
    \item For the Fano case (i.e. del Pezzo surfaces), the simple (ADE) {\sing}ities that can occur on singular Fano surfaces (i.e. singular del Pezzo surfaces) were completely classified by du Val \cite{[DV34]} (see also \cite[Section 1]{[Sta21]}). Denote the smooth del Pezzo surface of degree $9-k$ by $\mathbb{D}_k$. According to it, $\mathbb{D}_k$ can {\degen} to a singular Fano surface with D or E type {\sing}ities when $5 \leq k \leq 8$,
while it can only {\degen} to a singular Fano surface with A type {\sing}ities when $0 \leq k \leq 4$. The quantum cohomology ring $QH(\mathbb{D}_k)$ is semi-simple (when $\mathbb{D}_k$ is equipped with a monotone {\symp} form) if and only if $0 \leq k \leq 4$, so this is consistent with {\thm} \ref{no DE config}.

\item 
For the Calabi--Yau case, it is also well-known that D,E and the 14 exceptional {\sing}ities can appear in {\degtion}s of the K3 surface. However, Calabi--Yau {\mfd}s, i.e. $c_1|_{\pi_2}=0$, cannot have semi-simple {\QH}s. Once again, this is consistent with {\thm} \ref{no DE config}.

\end{itemize}

\textit{Compactification of Milnor fibers.}
First of all, notice that  {\thm} \ref{no DE config SG} immediately implies the following.

\begin{corol}\label{cannot cptify}
    The Milnor fiber of an isolated {\hypsurf} {\sing}ity that is not of type A cannot be compactified to a {\symp} {\mfd} with semi-simple {\QH}.
\end{corol}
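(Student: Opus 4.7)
The plan is to deduce this {\cor} directly from {\thm} \ref{no DE config SG}. Suppose, for contradiction, that the Milnor fiber $F$ of an isolated {\hypsurf} {\sing}ity that is not of type $A$ embeds {\symp}ally in a closed {\symp} {\mfd} $(W,\omega)$ whose {\QH} is semi-simple. The real dimension of $W$ is then $2n$, where $n$ is the complex dimension of the {\sing}ity.

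The first step is to recall, from standard Picard--Lefschetz theory, that $F$ carries a distinguished basis of vanishing cycles, each realized as a {\lagsph} in $F$, with intersection pattern prescribed by a Coxeter--Dynkin diagram attached to the {\sing}ity; these {\lagsph}s persist as {\lagsph}s of $W$ under the {\symp} embedding $F \hookrightarrow W$.

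The second step is to exhibit, in every such Coxeter--Dynkin diagram of a non-$A$ {\sing}ity, a $D_4$-subconfiguration (when $n$ is even) or an $A_3$-subconfiguration (when $n$ is odd, so that the arithmetic hypothesis of {\thm} \ref{no DE config SG} applies to $W$). For the simple {\sing}ities $D_m$ ($m\geq 4$) and $E_m$ ($m=6,7,8$), the Dynkin diagram contains a trivalent vertex which, together with its three neighbors, realizes a $D_4$ subdiagram; an $A_3$-subdiagram is then visible as the path through the center using any two of these neighbors. For the modality-$\geq 1$ {\sing}ities (parabolic $\wt{E}_m$, hyperbolic $T_{p,q,r}$, the fourteen exceptional {\sing}ities, and every higher-modality {\sing}ity), I would invoke Arnold's adjacency hierarchy: every non-$A$ isolated {\hypsurf} {\sing}ity is adjacent to $D_4$, i.e. the semi-universal unfolding admits a $D_4$-stratum. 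Therefore, by a small perturbation of the defining equation, one finds inside $F$ the Milnor fiber of $D_4$ together with its canonical $D_4$-{\config} of vanishing {\lagsph}s, yielding the required subconfiguration.

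The third step is to invoke {\thm} \ref{no DE config SG} applied to $W$: semi-simplicity of $QH(W,\omega)$ excludes both $D_4$-{\config}s (when $n$ even) and $A_3$-{\config}s (when $n$ odd with the stated arithmetic condition), contradicting the conclusion of the second step.

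The main obstacle I foresee is the verification in the second step that every non-$A$ {\sing}ity really contains a $D_4$-{\config} among its vanishing cycles; for simple $DE$ {\sing}ities this is combinatorially obvious, but for higher modality {\sing}ities it relies on the adjacency structure documented in \cite{[AGLV93]}, which I would quote rather than re-prove. A secondary subtlety is making sure that a purely combinatorial sub-diagram of the Coxeter--Dynkin diagram is geometrically realized by pairwise disjoint (rather than merely non-labeled) {\lagsph}s; this is why I prefer to produce the $D_4$-{\config} via the adjacency $D_4 \leftarrow X$, because then the sub-configuration arises as the genuine vanishing cycle system of $D_4$ embedded inside $F$ and its intersection geometry is exactly that of the $D_4$ diagram.
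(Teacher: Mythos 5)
Your proposal is correct and follows the same overall strategy the paper intends: embed the Milnor fiber symplectically into the hypothetical compactification, locate a $D_4$- (resp.\ $A_3$-) configuration of vanishing cycles there, and contradict Theorem \ref{no DE config SG}. The paper treats the corollary as immediate, the configuration-hunting being carried out in the proof of Theorem \ref{no DE config}: for the simple $D,E$ singularities it reads off the trivalent vertex of the Dynkin diagram, exactly as you do, while for positive modality it quotes Keating's results (Propositions \ref{parab dynkin} and \ref{reduction to parab}) to embed the vanishing-cycle system of a parabolic singularity $\wt{E}_6,\wt{E}_7,\wt{E}_8$ and then finds the trivalent vertex in Gabrielov's diagram. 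Your alternative for the positive-modality case --- every non-$A$ (equivalently, corank $\geq 2$) germ is adjacent to $D_4$, and adjacency yields a symplectic embedding of the $D_4$ Milnor fiber together with its genuine, geometrically realized vanishing-cycle configuration --- is a legitimate and arguably more uniform route; it rests on the same general principle (adjacency induces Liouville-subdomain embeddings carrying distinguished vanishing cycles) that underlies Keating's lemma, so it buys you a single argument covering all non-$A$ singularities at once, at the cost of invoking the adjacency hierarchy of \cite{[AGLV93]} rather than a ready-made symplectic statement. Your attention to the disjointness issue (a vanishing ``$0$'' in the Coxeter--Dynkin diagram records only a homological intersection number, whereas the superheaviness argument behind Theorem \ref{no DE config SG} needs genuine disjointness) is well placed and is resolved the same way the paper resolves it. Two hypotheses left implicit in the corollary's statement should be made explicit, as you partly do: the compactification must be to a closed \emph{monotone} symplectic manifold for Theorem \ref{no DE config SG} to apply, and in the odd-dimensional case the conclusion is only available under the arithmetic condition $\frac{n+1}{2N_X}\notin\Z$.
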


Milnor fiber is, loosely speaking, a smoothing of the {\sing}ity (see Definition \ref{def Milnor fiber}).

\begin{itemize}
    \item In \cite{[Kea15]}, where Keating studied the {\symp} topology of the Milnor fibers of isolated {\hypsurf} {\sing}ities of positive modality, an important step was to find compactifications of the Milnor fibers to del Pezzo surfaces $\mathbb{D}_k$. In \cite[{\propo} 5.19]{[Kea15]}, Keating proves that the Milnor fibers of $\wt{E}_{6},\wt{E}_{7},\wt{E}_{8}$ (isolated {\hypsurf} {\sing}ities of modality one) can be compactified to $\mathbb{D}_6,\mathbb{D}_7,\mathbb{D}_8$, respectively. This is compatible with {\cor} \ref{cannot cptify}, as $\mathbb{D}_6,\mathbb{D}_7,\mathbb{D}_8$ do not have semi-simple {\QH}s. 

    \item 
It is also well-known that the Milnor fiber of the 14 exceptional {\sing}ities (modality one) can be compactified to the K3 surface \cite{[Pin77],[Nik79],[Dol96]}, see also \cite{[KMU13]}. This is also consistent with {\cor} \ref{cannot cptify}.

\end{itemize}

\subsection{$A_m$ {\sing}ities and Hofer geometry}

{\thm} \ref{no DE config SG} implies that there is a possibility that a {\symp} {\mfd} with semi-simple quantum cohomology ring can contain an $A_m$-{\config}. In fact, this can happen; for example, del Pezzo surfaces $ \mathbb{D}_3$ and $\mathbb{D}_4$ have an $A_2$-{\config} and an $A_4$-{\config}, respectively. If we are in such a situation, we get some implication for the Hofer geometry. Before stating the result, recall that the set of {\hamil} {\diffeo}s, denoted by $\Ham(X,\omega)$, form a group and has a remarkable bi-invariant metric called the Hofer metric \cite{[Hof93]}. The study of geometric properties of the group $\Ham(X,\omega)$ {\wrt} the Hofer metric has been an important subject of the field \cite{[Pol01]}. For readers who are not familiar with the subject, we refer to \cite[Section 1.1]{[Kaw]} for a rapid overview of the aspects that are relevant to this paper. Our second main result is the following.

     \begin{theox}\label{Am config many qmor}
    Let $(X,\omega)$ be a real $2n$ dimensional closed monotone {\symp} {\mfd} with even $n$. Assume $QH(X,\omega)$ is semi-simple. If $(X,\omega)$ contains an $A_m$-{\config}, then there are $m-1$ pairwise distinct {\EP} {\qmor}s on $\wt{\Ham} (X,\omega)$.
    \end{theox}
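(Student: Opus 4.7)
The plan is to attach to each Lagrangian sphere in the given $A_m$-{\config} an idempotent of the semi-simple quantum cohomology ring, exhibit at least $m-1$ distinct such idempotents among them, and invoke the {\EP} machine that produces a {\qmor} from each primitive idempotent. Since $QH(X,\omega) = \bigoplus_{j=1}^{k} Q_j$ with orthogonal primitive idempotents $e_1, \ldots, e_k$, each $e_j$ yields a homogeneous {\qmor} $\mu_{e_j} \colon \wt{\Ham}(X,\omega) \to \R$ via asymptotic {\specinv}s, so the task reduces to detecting how many of the $\mu_{e_j}$ the configuration can non-trivially see.

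First I would show that each sphere $L_i$ in the {\config} is superheavy with respect to some idempotent. The assumption that $n$ is even ensures that $L_i$ is monotone and that its self-Floer cohomology $HF(L_i, L_i)$ is non-zero (Euler characteristic $2$); the unital ring map $QH(X,\omega) \to HF(L_i, L_i)$ coming from the open--closed morphism then sends exactly one primitive $e_{\sigma(i)}$ to a non-zero idempotent, pinning down a character $\sigma(i) \in \{1, \ldots, k\}$ with respect to which $L_i$ is superheavy. Next I would invoke the fundamental intersection property of {\EP}: two superheavy sets for a common idempotent must intersect. Since non-adjacent spheres $L_i, L_j$ (with $|i-j| \geq 2$) are disjoint by the definition of the $A_m$-{\config}, the assignment $\sigma$ must satisfy $\sigma(i) \neq \sigma(j)$ whenever $|i-j| \geq 2$---but this alone only yields $\lceil m/2 \rceil$ distinct idempotents, which falls short of the desired $m-1$.

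The hard part is closing this gap. My plan is to enlarge the family $\{L_i\}$ by iterated Dehn twists, following Seidel, to obtain {\lagsph}s $L_\alpha$ indexed by all positive roots $\alpha$ of the $A_m$ root system (with $L_{\alpha_i} = L_i$), whose pairwise intersection pattern is governed by the Cartan pairing: two such spheres are disjoint precisely when the positive roots $e_i - e_j$ and $e_k - e_l$ have disjoint supports $\{i,j\}, \{k,l\} \subset \{1, \ldots, m+1\}$. Re-running the open--closed argument on each $L_\alpha$ produces a character $\sigma(\alpha)$, and the disjointness requirement translates into a proper vertex coloring of the Kneser graph $K(m+1, 2)$, whose chromatic number is $m-1$ by the Lov{\'a}sz--Kneser theorem. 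The anticipated main obstacle is the Floer-theoretic input needed to make this work uniformly on all Dehn-twisted spheres---verifying that each $L_\alpha$ has non-vanishing self-Floer cohomology and the appropriate {\suphvness}---for which the third main theorem of the paper, asserting that Dehn twists reduce {\specinv}s, is the expected technical tool. Once $m-1$ distinct idempotents are produced in this way, the corresponding $\mu_{e_{j_s}}$ are pairwise distinct because $\mu_e - \mu_{e'}$ for $e \neq e'$ is detected by any {\hamil} supported near a region separating the superheavy supports of $e$ and $e'$.
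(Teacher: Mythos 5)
There is a genuine gap, and it sits at the very first step. For \emph{even}-dimensional monotone Lagrangian spheres the claim that the unital ring map $\mathcal{CO}^0\colon QH(X,\omega)\to HF(L_i)$ ``sends exactly one primitive $e_{\sigma(i)}$ to a non-zero idempotent'' is false: semi-simplicity forces $[L_i]\neq 0$ to be non-nilpotent, so the Biran--Membrez cubic equation $[L_i]^3=4\beta_{L_i}[L_i]$ gives $\beta_{L_i}\neq 0$, hence $p_{L_i}^2=\beta_{L_i}1_{L_i}\neq 0$ and $HF(L_i)\cong\Lambda\times\Lambda$ splits into \emph{two} fields. Consequently exactly \emph{two} primitive idempotents $e^{L_i}_{\pm}=\pm\tfrac{1}{4\sqrt{\beta}}[L_i]+\tfrac{1}{8\beta}[L_i]^2$ survive under $\mathcal{CO}^0$, and $L_i$ is superheavy with respect to \emph{both} (Lemma \ref{sphere spec inv relation}). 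Your single character $\sigma(i)$ is therefore not well-defined, and the $\lceil m/2\rceil$ bound you derive from it is an artifact of discarding half of the available structure. The paper's proof uses precisely this doubling: by Lemma \ref{lemma spheres idemp} adjacent spheres share exactly one of their two idempotents, say $e^{S_j}_+=e^{S_{j+1}}_-$, producing a chain of idempotents along the $A_m$-diagram; the $m-1$ quasimorphisms $\zeta_{e^{S_j}_+}$, $1\le j\le m-1$, are then pairwise distinct because for $i<j$ the sphere $S_i$ is $e^{S_i}_+$-superheavy, $S_{j+1}$ is $e^{S_j}_+$-superheavy, and $S_i\cap S_{j+1}=\emptyset$, contradicting Proposition \ref{suphv constant} if the two functionals coincided. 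No Dehn twists are needed.

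Your proposed workaround---spheres indexed by all positive roots of $A_m$, disjointness governed by root supports, and the Lov\'asz--Kneser chromatic number $\chi(K(m+1,2))=m-1$---is an appealing combinatorial idea, but as written it does not close the gap: it still rests on the false one-idempotent-per-sphere premise, and the geometric (not merely homological) disjointness of the Dehn-twisted spheres $L_\alpha$ required to run the coloring argument is asserted rather than established. Invoking Theorem \ref{Dehn twist spec inv} here would also be methodologically backwards, since that theorem is itself derived from the same spectral-rigidity lemmas that prove the statement at hand directly.
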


\begin{remark}
     {\EP} {\qmor}s are special maps on $\wt{\Ham} (X,\omega)$, i.e. the universal cover of $\Ham(X,\omega)$, constructed from Floer theory that have powerful applications to Hofer geometry. See Section \ref{EP qmors and superheaviness} for further information.
\end{remark}

\textit{Application.} The del Pezzo surface $\mathbb{D}_4$ has semi-simple {\QH}, and by combining some {\tordeg} and (complex) 2-dimensional techniques with {\thm} \ref{Am config many qmor}, we obtain the following result on the Hofer geometry for $\mathbb{D}_4$.

    \begin{theo}[Kapovich--{\pol} question, Entov--{\pol}--Py question]\label{D4 four qmors}
    There are four pairwise distinct {\EP} {\qmor}s on $\Ham(\mathbb{D}_4)$. Thus, $ \Ham( \mathbb{D}_4 ) $ admits a quasi-isometric embedding of $\R^4$. Moreover, there are three linearly independent {\qmor}s on $\Ham(\mathbb{D}_4)$ that are both $C^0$ and Hofer-Lipshitz continuous. In particular, the group $ \Ham( \mathbb{D}_4 ) $ is not quasi-isometric to the real line $\mathbb{R}$ {\wrt} the Hofer metric.
    \end{theo}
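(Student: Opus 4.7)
The plan is to combine Theorem~\ref{Am config many qmor} applied to an $A_4$-configuration of Lagrangian spheres in $\mathbb{D}_4$ with a toric degeneration argument producing one additional EP quasimorphism, and then to invoke $C^0$-continuity criteria for spectral invariants to upgrade three of them to the required continuity class.

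First I would exhibit an $A_4$-configuration of Lagrangian spheres in $\mathbb{D}_4$. This is visible from a toric degeneration of $\mathbb{D}_4$ to a singular toric Fano surface containing an $A_4$-chain of rational double points (reflecting the $A_4$ root system structure of the Picard lattice of the quintic del Pezzo); the vanishing cycles of the smoothing realize the required configuration. Since $\dim_{\mathbb{C}} \mathbb{D}_4 = 2$ is even and $QH(\mathbb{D}_4)$ is semi-simple over $\mathbb{C}$, Theorem~\ref{Am config many qmor} then yields three pairwise distinct EP quasimorphisms $\mu_1, \mu_2, \mu_3$ on $\widetilde{\Ham}(\mathbb{D}_4)$. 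For the fourth one, I would use a monotone Lagrangian torus $T \subset \mathbb{D}_4$ obtained from an almost toric fibration: by the Fukaya--Oh--Ohta--Ono superpotential computation, $T$ has nonvanishing Floer cohomology and is superheavy with respect to some idempotent $e_4 \in QH(\mathbb{D}_4)$, producing $\mu_4 := \mu_{e_4}$. To separate $\mu_4$ from each $\mu_i$, I would build autonomous test Hamiltonians supported in small Weinstein neighborhoods of the Lagrangian spheres of the $A_4$-chain but disjoint from $T$ and compare spectral invariants. This pairwise separation is the principal technical obstacle; it relies crucially on the explicit $2$-dimensional combinatorics of the moment polytope of the toric degeneration to arrange the disjointness.

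Once four pairwise distinct homogeneous EP quasimorphisms are in hand, they are automatically $\mathbb{R}$-linearly independent, and being Hofer--Lipschitz they produce a quasi-isometric embedding $\mathbb{R}^4 \hookrightarrow \widetilde{\Ham}(\mathbb{D}_4)$ via four commuting autonomous Hamiltonians with pairwise disjoint support. These quasimorphisms descend to $\Ham(\mathbb{D}_4)$ by standard arguments, since homogeneous quasimorphisms kill the torsion contribution of $\pi_1(\Ham(\mathbb{D}_4))$ and the remainder can be checked directly on generators.

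For the $C^0$-continuity claim, I would invoke the Entov--Polterovich--Py criterion together with recent $C^0$-continuity results for spectral invariants on symplectic $4$-manifolds, applied to three of the four quasimorphisms---specifically those whose superheavy supports are Lagrangian spheres in the $A_4$-chain---for which an energy-capacity inequality promotes Hofer-Lipschitz continuity to $C^0$-continuity. The hardest step here is verifying the hypotheses of the $C^0$-continuity criterion for the specific superheavy Lagrangians in $\mathbb{D}_4$, as this requires controlling spectral invariants under $C^0$-small perturbations of Hamiltonians. The final assertion that $\Ham(\mathbb{D}_4)$ is not quasi-isometric to $\mathbb{R}$ is then immediate from the quasi-isometric embedding of $\mathbb{R}^4$.
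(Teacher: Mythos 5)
Your overall strategy — combine Theorem~\ref{Am config many qmor} (or a close variant) applied to spheres coming from a degeneration of $\mathbb{D}_4$ with one additional quasimorphism coming from a monotone Lagrangian torus, and then descend and upgrade continuity — is the right one and matches the paper's structure. However, there are two concrete gaps.

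First, you assert the existence of a genuine $A_4$-configuration of Lagrangian spheres in $\mathbb{D}_4$. The paper does not actually produce one. Sun's toric degeneration \cite{[Sun20]} yields a mutually disjoint monotone torus, an $A_2$-configuration $\{S_1,S_2\}$, and an $A_1$-configuration $\{S_3\}$, with classes $[S_1]=E_2-E_3$, $[S_2]=E_3-E_4$, $[S_3]=H-E_2-E_3-E_4$. The missing link $S$ (with $[S]=E_4-E_1$) is obtained from \cite[Lemma 5.3]{[BLW14]}, but only under the weaker homological control $[S]\cdot[S_2]=[S]\cdot[S_3]=1$, $[S]\cdot[S_1]=0$; it is not shown that $S$ intersects $S_2$ and $S_3$ transversally in exactly one point, which is what a genuine $A_m$-configuration requires. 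The paper therefore works with a \emph{partial} $A_4$-configuration and observes that Lemma~\ref{lemma spheres idemp} and Lemma~\ref{sphere spec inv relation} only need $\beta_L\neq 0$ and $[L]\cdot[L']\neq 0$, so the homological data suffices. If you insist on a genuine $A_4$-configuration you would need an additional geometric argument that, as far as the paper is concerned, is not available; you should either supply it or retreat to the homological version as the paper does.

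Second, your justification for descending the quasimorphisms from $\widetilde{\Ham}(\mathbb{D}_4)$ to $\Ham(\mathbb{D}_4)$ is not correct as stated. Homogeneous quasimorphisms do kill torsion in $\pi_1$, but that observation alone does not finish the job unless you know $\pi_1(\Ham(\mathbb{D}_4))$ is torsion, and there is no a priori reason for that. The paper instead invokes Evans' theorem \cite[Theorem 1.3]{[Eva11]} that $\Symp_0(\mathbb{D}_4)=\Ham(\mathbb{D}_4)$ is weakly contractible, so $\pi_1(\Ham(\mathbb{D}_4))$ is trivial and descent is automatic. You need to cite this (or prove $\pi_1=0$) rather than gesture at generators. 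The $C^0$-continuity statement similarly needs a precise reference; the paper uses the construction of \cite[Theorem 22]{[Kaw22]}, which is a specific mechanism rather than a generic $C^0$-continuity criterion for spectral invariants.
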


    \begin{remark}
 The Kapovich--{\pol} question, which asks whether for a closed {\symp} {\mfd}  $(X,\omega)$, the group $\Ham(X,\omega)$ is quasi-isometric to the real line {\wrt} the Hofer metric, has been an important open problem in Hofer geometry for a long time, and at the time of writing, it has been answered in the negative for {\symp} {\mfd}s that satisfy some dynamical condition \cite{Ush13}, the monotone $S^2 \times S^2$ \cite{[FOOO19]} (see also \cite{[EliPol]}), and the $2$-sphere \cite{[CGHS],[PS]} and the del Pezzo surfaces $\mathbb{D}_3, \mathbb{D}_4 $ \cite{[Kaw]}. {\thm} \ref{D4 four qmors} improves what was known about the Kapovich--{\pol}, and the Entov--{\pol}--Py questions for $\mathbb{D}_4$ from \cite[{\thm} C(2),D,E]{[Kaw]}.
\end{remark}

\begin{remark}
    It seems likely that the number of pairwise distinct {\EP} {\qmor}s on $\Ham(\mathbb{D}_4)$ in {\thm} \ref{D4 four qmors} can be improved from four to six, see Remark \ref{six qmor}.
\end{remark}

\subsection{Dehn twist and {\specinv}s}

    The proofs of Theorems \ref{no DE config}, \ref{no DE config SG}, and \ref{Am config many qmor} are based on the theory of {\specinv}s (see Section \ref{prelim specinv} for the definition of {\specinv}s). As mentioned earlier in Section \ref{context}, {\config}s of {\lagsph}s were used by Seidel to study the Dehn twist. Recall that the Dehn twist is a (class of) {\symplecto}(s) that is defined for a {\lagsph}. By using some ingredients of the proof of {\thm} \ref{no DE config SG}, we get the following result which describes the effect of the Dehn twist on {\specinv}s.

\begin{theox}\label{Dehn twist spec inv}
    Let $(X,\omega)$ be a real $2n$ dimensional closed monotone {\symp} {\mfd}. Assume $QH(X,\omega)$ is semi-simple and also either one of the following: 
    \begin{itemize}
        \item $n$ is even,
        \item $n$ is odd and $\frac{n +1}{2N_X} \notin \Z$ where $N_X$ is the minimal Chern number.
    \end{itemize}
     If $(X,\omega)$ contains an $A_2$-{\config} of {\lagsph}s $\{L,L'\}$, then we have
      $$  \ovl{\ell}_{\tau_{L}(L')} (H) \leq \max \{ \ovl{\ell}_{L} (H) , \ovl{\ell}_{L'} (H) \} $$
   for any {\hamil} $H$, where $\tau_{L} $ is the Dehn twist about $L$.
\end{theox}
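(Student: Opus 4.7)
The plan is to deduce the inequality from the Biran--Cornea machinery of monotone Lagrangian cobordisms together with Seidel's exact triangle for the Dehn twist. In short, one promotes the derived-categorical identity
\[
\tau_{L}(L') \simeq \mathrm{Cone}\bigl(L \otimes HF(L,L') \longrightarrow L'\bigr)
\]
to a geometric Lagrangian cobordism $V\subset \C \times X$ with one negative end $\tau_L(L')$ and two positive ends $L,L'$, and then reads off the required max-inequality from the general principle that Lagrangian cobordisms dominate spectral invariants. This is the same mechanism that underlies the non-existence statement in Theorem \ref{no DE config SG}, so much of the setup should be reusable.

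First I would construct the cobordism. In an $A_2$-configuration the spheres $L$ and $L'$ meet transversely in a single point $p$, so Lagrangian surgery $L'\#_p L$ at that point is well-defined, and a classical computation identifies it, up to Hamiltonian isotopy, with $\tau_L(L')$. A theorem of Biran--Cornea (together with the work of Mak--Wu on the monotone surgery cobordism) then furnishes a monotone Lagrangian cobordism $V\co \tau_L(L')\leadsto (L,L')$ with the same monotonicity constant as $(X,\omega)$. Monotonicity is automatic once $L$ and $L'$ themselves are monotone spheres, which is part of the $A_2$-configuration hypothesis in a monotone $(X,\omega)$.

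Next I would invoke the Biran--Cornea--Leclercq--Zapolsky spectral inequality: for any monotone Lagrangian cobordism with negative end $N$ and positive ends $N_1,\dots,N_k$, all non-narrow and sharing a common idempotent $e\in QH(X,\omega)$, one has
\[
\overline{\ell}_{N}(H)\leq \max_{i}\ \overline{\ell}_{N_i}(H)
\]
for every Hamiltonian $H$. Applied to $V$ with $N=\tau_L(L')$ and $(N_1,N_2)=(L,L')$, this is exactly the desired bound. The semi-simplicity of $QH(X,\omega)$ combined with the parity/Chern-number hypothesis on $n$ and $N_X$ enters exactly as in the proof of Theorem \ref{no DE config SG}: the arithmetic condition rules out the Maslov-index obstruction that would force $HF(L,L)=0$, hence guarantees that all three of $L, L', \tau_L(L')$ are non-narrow Lagrangian spheres, while semi-simplicity provides a field summand of $QH$ whose unit is simultaneously heavy for the three spheres, so that the spectral invariants $\overline{\ell}_{\bullet}(H)$ are defined relative to a single common idempotent.

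The main obstacle I anticipate is the Floer-theoretic bookkeeping required to ensure that the surgery cobordism genuinely realizes Seidel's exact triangle at the chain level in the monotone Fukaya category, with degrees/shifts arranged so that the Biran--Cornea inequality applies directly (as opposed to with an extra shift term). Once this is in place, the arithmetic hypothesis takes care of non-narrowness uniformly for the three ends, and the inequality follows formally. I expect no genuinely new geometric input beyond what is already being assembled for Theorems \ref{no DE config} and \ref{no DE config SG}; the content of Theorem \ref{Dehn twist spec inv} is essentially to isolate this cobordism step as a statement about spectral invariants in its own right.
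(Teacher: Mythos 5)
Your proposal takes a genuinely different route from the paper, but there is a critical gap at its center. The paper does not use Lagrangian cobordisms or Seidel's exact triangle at all; instead, for even $n$ it computes the idempotents $e^{\tau_L(L')}_{\pm}$ directly from the Picard--Lefschetz formula $(\tau_L)_\ast[L']=[L']\pm[L]$ together with the Biran--Membrez expression $[L]=2\sqrt{\beta}(e^L_+-e^L_-)$, obtains via {\propo} \ref{intersection and idempotent} that $\{e^{\tau_L(L')}_+, e^{\tau_L(L')}_-\}\subset\{e^L_\pm\}\cup\{e^{L'}_\pm\}$, and then invokes Lemma \ref{sphere spec inv relation}, which identifies $\ovl{\ell}_L(H)=\max\{\zeta_{e^L_+}(H),\zeta_{e^L_-}(H)\}$; for odd $n$ a parallel argument with the single distinguished idempotent $e^L=e^{L'}=e^{\tau_L(L')}$ gives equality throughout. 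Nothing cobordism-theoretic enters.

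The gap in your approach is precisely the ``Biran--Cornea--Leclercq--Zapolsky spectral inequality'' you invoke: a statement that for any monotone Lagrangian cobordism $V\colon N\leadsto(N_1,\dots,N_k)$ one has $\ovl{\ell}_N(H)\le\max_i\ovl{\ell}_{N_i}(H)$ for the asymptotic Lagrangian spectral invariants attached to the units $1_{N_i}\in HF(N_i)$. No such theorem exists in the literature in this form, and the paper explicitly flags this: the introduction states that ``the precise impact of the Dehn twist on spectral invariants was not clear'' in the cone/cobordism framework, and the closing remark of the section notes that the nearest result, \cite[Lemma 5.3.1]{BC21}, is proved only for exact Lagrangians in Liouville manifolds and that it ``would be interesting'' to obtain the inequality by that method -- i.e., it is an open question, not an available lemma. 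Establishing such an inequality would require controlling the filtration shifts coming from the surgery cobordism and showing they remain bounded under iteration $H^{\#k}$ so the homogenization kills them; you dismiss this as ``bookkeeping,'' but it is the entire mathematical content missing from your argument. There is also a secondary imprecision: monotonicity of the surgery cobordism $V$ is not ``automatic'' from monotonicity of $L,L'$; it depends on the surgery parameter and is the content of Mak--Wu's monotone surgery theory, which carries its own hypotheses. Your instinct to relate the Dehn twist to the cone/cobordism picture is sound and is in fact the perspective the paper says it would like to realize, but as a proof it presupposes the very filtered statement the theorem is meant to be a first step toward.
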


\begin{remark}
\begin{enumerate}
\item The function $ \ovl{\ell}_{L} $ is the asymptotic {\lag} {\specinv} associated to a {\lag} $L$. For the precise definition, see \eqref{asymp lag spec inv} in Section \ref{prelim specinv}.

\item Strictly speaking, the Dehn twist $\tau_L$ about a {\lagsph} $L$ is usually referred to a class of {\symplecto}s, i.e. $\tau_L \in \pi_0 (\Symp(X,\omega))$. In {\thm} \ref{Dehn twist spec inv}, we consider any representative of the class and denote it by $\tau_L $ by abuse of notation. Any two representatives of the Dehn twist are {\hamil} isotopic and thus the {\specinv}s corresponding to $\ell_{\tau_{L}(L')}$ might have a shift up to the Hofer norm of the {\hamil} isotopy between them \cite[{\propo} 2.6]{[LZ18]}. Nevertheless, the \textit{{\asympt}} {\specinv} $\ovl{\ell}_{\tau_{L}(L')}$ does not depend on the choice of the representative of the Dehn twist and is well-defined.

\item It is not difficult to find examples (e.g. in $\mathbb{D}_4$) where we have a strict inequality in {\thm} \ref{Dehn twist spec inv}.

\end{enumerate}
\end{remark}

To put {\thm} \ref{Dehn twist spec inv} into perspective, following the success of the barcode theory in {\symp} topology, there has been a lot of work on the filtration beyond the level of Floer homology, namely for Fukaya categories, e.g. \cite{BCZ,Amb}. Cone-attaching is a fundamental algebraic operation in the $A_{\infty}$-category theory, which in the context of Fukaya category has a geometric interpretation, namely the {\lag} cobordism, e.g. the Dehn twist of a {\lagsph}. Biran--Cornea studied the filtration of Seidel's Floer-theoretic long exact sequence involving the Dehn twist \cite{BC21}, but the precise impact of the Dehn twist on {\specinv}s was not clear. Thus, {\thm} \ref{Dehn twist spec inv} could be regarded as the first step in the study of {\specinv}s in the filtered $A_{\infty}$-categorical setting.

\subsection{Structure of the paper}

To prove {\thm} \ref{no DE config}, we reduce it to its {\symp} counterpart, namely {\thm} \ref{no DE config SG}, by some elementary algebro-geometric argument in Section \ref{From SG to AG}. {\thm} \ref{no DE config SG} is proven by the \textit{spectral rigidity} (i.e. {\symp} rigidity in terms of {\specinv}s) of {\lagsph}s. The key two lemmas are Lemma \ref{sphere spec inv relation} and Lemma \ref{lemma spheres idemp}: the former describes some spectral rigidity of a {\lagsph}, and the latter describes a property of {\idem}s corresponding to {\lagsph}s forming an $A_2$-{\config}. Although {\thm}s \ref{Am config many qmor} and \ref{Dehn twist spec inv} stem from different perspectives compared to {\thm}s \ref{no DE config} and \ref{no DE config SG} (where {\thm} \ref{Am config many qmor} has some {\hamil} dynamical flavor and {\thm} \ref{Dehn twist spec inv} comes from the filtered $A_{\infty}$-categorical perspective), their proofs are based on the spectral rigidity of {\lagsph}s that was studied to prove {\thm} \ref{no DE config SG}.

\subsection{Acknowledgements}
My special thanks go to Jonathan Evans and Kazushi Ueda; they have kindly answered many questions, shared with me a lot of their knowledge, and discussions I had with them at different stages of this work helped me a lot to develop intuition and to improve my understanding. The constructive comments given by Kazushi Ueda on the earlier version of the paper improved the exposition. Many other thanks are due: The discussion with Michel Brion on {\thm} \ref{no DE config} was crucial to understand better the mathematical context; Ailsa Keating and Yuhan Sun kindly answered my questions about their excellent works and gave interesting feedback on this work; Paul Biran made me realize the subtlety in translating the symplectic statement {\thm} \ref{no DE config SG} to the algebro-geometric statement {\thm} \ref{no DE config}, and I received a lot of help from Olivier Benoist, Samir Canning, Olivier de Gaay Fortman for this translation; A discussion with Cheuk Yu Mak lead to the generic version of {\thm} \ref{no DE config}; Fumihiko Sanda communicated to the author about the technical constraints; Osamu Fujino, Hiroshi Iritani,  Kaoru Ono, and Rahul Pandharipande gave interesting and useful feedback from the algebro-geometric point of view, especially on the historical remarks. Lastly, I thank the anonymous referee for a careful reading and valuable remarks. This work was supported by the Hermann-Weyl-Instructorship at the Institute for Mathematical Research (FIM) of ETH Z\"urich.

\section{Preliminaries}

\subsection{Spectral invariant theory}\label{prelim specinv}
 
It is well-known that on a closed {\symp} {\mfd} $(X,\omega)$\footnote{Although the results in this section hold for general closed {\symp} {\mfd}s, we will only be using the monotone case due to some Floer-theoretic constraints that will appear later, which is not from the {\specinv} theory.}, for a non-{\degen} {\hamil} $H:=\{H_t:X \to \R \}_{t\in [0,1]}$ and a choice of a nice {\coeff} field $\Lambda^{\downarrow}$, such as the downward Laurent {\coeff}s $\Lambda_{\text{Lau}} ^{\downarrow} $ for the monotone case
$$\Lambda_{\text{Lau}} ^{\downarrow} :=\{\sum_{k\leq k_0 } b_k t^k : k_0\in \mathbb{Z},b_k \in \mathbb{C} \}  ,   $$
or the downward Novikov {\coeff}s $\Lambda_{\text{Nov}} ^{\downarrow}$ for the general case
$$\Lambda_{\text{Nov}} ^{\downarrow}:=\{\sum_{j=1} ^{\infty} a_j T^{\lambda_j} :a_j \in \mathbb{C}, \lambda _j  \in \mathbb{R},\lim_{j\to -\infty} \lambda_j =+\infty \} ,$$
one can construct a filtered Floer homology group $\{HF^\tau(H):=HF^\tau(H;\Lambda^{\downarrow})\}_{\tau \in \R}$. Note that in this paper, we only use Novikov {\coeff}s, i.e. 
$$ \Lambda ^{\downarrow}=\Lambda_{\text{Nov}} ^{\downarrow} .$$
For two numbers $\tau<\tau'$, the groups $HF^\tau(H;\Lambda^{\downarrow})$ and $HF^{\tau'}(H;\Lambda^{\downarrow})$ are related by a map induced by the inclusion map on the chain level:
$$i_{\tau,\tau'}: HF^\tau(H;\Lambda^{\downarrow}) \longrightarrow HF^{\tau'}(H;\Lambda^{\downarrow}) ,$$
and especially we have
$$i_{\tau}: HF^\tau(H;\Lambda^{\downarrow}) \longrightarrow HF^{}(H;\Lambda^{\downarrow}) ,$$
where $HF^{}(H;\Lambda^{\downarrow})$ is the Floer homology group. There is a canonical ring isomorphism called the Piunikhin--Salamon--Schwarz (PSS)-map \cite{[PSS96]}, \cite{[MS04]}
$$PSS_{H; \Lambda_{} } : QH(X,\omega ;\Lambda_{}) \xrightarrow{\sim} HF(H;\Lambda_{} ^{\downarrow}) ,$$
where $QH(X,\omega;\Lambda_{})$ denotes the quantum cohomology ring of $(X,\omega)$ with $\Lambda$-{\coeff}s, i.e.
$$ QH(X,\omega;\Lambda_{}) := H^\ast (X;\C) \otimes \Lambda_{} .$$
Here, $\Lambda_{}$ is the Novikov {\coeff}s (the universal Novikov field) $\Lambda_{\text{Nov}}$
$$\Lambda_{\text{Nov}}:=\{\sum_{j=1} ^{\infty} a_j T^{\lambda_j} :a_j \in \mathbb{C}, \lambda _j  \in \mathbb{R},\lim_{j\to +\infty} \lambda_j =+\infty \} .$$ 
From now on, we will always take the the universal Novikov field to set-up the {\QH}, so we will often abbreviate it by $QH(X,\omega)$, i.e. 
$$QH(X,\omega) := QH(X,\omega ;\Lambda_{\text{Nov}}) .$$

The ring structure of $QH(X,\omega)$ is given by the quantum product, which is a quantum deformation of the intersection product
$$- \ast - :QH(X,\omega) \times QH(X,\omega) \to QH(X,\omega) . $$

The {\specinv}s, which were introduced by Schwarz \cite{[Sch00]} and developed by Oh \cite{[Oh05]} following the idea of Viterbo \cite{[Vit92]}, are real numbers $\{c (H,a ) \in \R\}$ associated to a pair of a {\hamil} $H$ and a class $a \in QH(X,\omega )$ in the following way:
$$c(H,a ) := \inf \{\tau \in \R : PSS_{H; \Lambda_{} } (a) \in \Im (i_{\tau})\} .$$

\begin{remark}
Although the Floer homology is only defined for a non-{\degen} {\hamil} $H$, the {\specinv}s can be defined for any {\hamil} by using the following \textit{Hofer continuity property}:
\begin{equation}\label{Hofer conti abs}
   \int_{0} ^1 \min_{x\in X} \left(H_t(x) - G_t(x) \right) dt \leq  c(H,a )-c(G,a ) \leq \int_{0} ^1 \max_{x\in X} \left(H_t(x) - G_t(x) \right) dt
\end{equation}
for any $a \in QH(X,\omega),\ H$ and $G$.
\end{remark}

Spectral invariants satisfy the \textit{triangle inequality}: for {\hamil}s $H,G$ and $a,b \in QH(X,\omega)$, we have
\begin{equation}\label{triangle ineq}
    c(H,a) + c(G, b) \geq c(H \# G, a \ast b )
\end{equation}
where $H \# G (t,x):=H_t(x) + G_t( \left( \phi^t _{H} \right)^{-1} (x)  )$ and it generates the path $t \mapsto \phi^t _H \circ \phi^t _G$ in $\Ham(X,\omega)$.

When we take the zero function as the {\hamil}, we have the \textit{valuation property}: for any $a\in  QH(X;\Lambda) \backslash \{0\},$
\begin{equation}\label{valuation}
   c (0,a)=\nu(a) 
\end{equation}
where $0$ is the zero-function and $\nu : QH(X;\Lambda) \to \R$ is the natural valuation function
\begin{equation}
    \begin{gathered}
        \nu : QH(X;\Lambda) \to \R \\
        \nu(a):= \nu(\sum_{j=1} ^{\infty} a_j T^{\lambda_j}):=\min\{\lambda_j: a_j \neq 0\}.
    \end{gathered}
\end{equation}

Note that from the triangle inequality \eqref{triangle ineq} and the valuation property \eqref{valuation}, for any $a\in  QH(X;\Lambda) \backslash \{0\},\ \lambda \in \Lambda$ and a {\hamil} $H$, we have
\begin{equation}\label{novikov shift}
    c(H, \lambda \cdot a) =  c(H,   a) + \nu (\lambda) .
\end{equation}

Analogous invariants for {\lag} Floer homology, namely the {\lag} {\specinv}s, were defined in \cite{[Lec08],[LZ18],[FOOO19],[PS]}. We summarize some basic properties of {\lag} {\specinv}s from these references. Once again, given a pair of a (non-{\degen}) {\hamil} $H$ and a class $a \in HF(L)$\footnote{The {\lag} Floer homology for $L$ without a {\hamil} term $HF(L)$ stands for the {\lag} quantum cohomology \cite{[BC08]}, which is also written as $QH(L)$ in the literature.}, we define 
$$\ell (H,\alpha ) := \inf \{\tau \in \R : PSS_{L,H } (\alpha) \in \Im (i_{\tau} ^L)\} $$
where 
$$PSS_{L,H } : HF(L) \rightarrow HF(L,H),$$
$$i_{\tau} ^L: HF^\tau (L,H) \to HF^\tau (L,H)  .$$
In this paper, we pay particular attention to the case where $\alpha= 1_L$. In this case, we simply denote 
$$\ell_L (H): = \ell (H,1_L ) .$$

Analogously to the {\hamil} case (c.f. \eqref{Hofer conti abs}), we have the \textit{{\lag} control property} for $\ell_L$:
\begin{equation}\label{Lag control prop}
   \int_{0} ^1 \min_{x\in L} H_t(x) dt \leq \ell_L (H)  \leq \int_{0} ^1 \max_{x\in L} H_t(x) dt
\end{equation}

Properties analogous to \eqref{triangle ineq}, \eqref{valuation}, \eqref{novikov shift} also hold for {\lag} {\specinv}s.

Note that both {\hamil} and {\lag} {\specinv}s satisfy the homotopy invariance, i.e. if two normalized {\hamil}s $H$ and $G$ generate homotopic {\hamil} paths $t \mapsto \phi_H ^t$ and $t \mapsto \phi_G ^t$ in $\Ham(X,\omega)$, then 
$$c(H, -) =c(G, -) . $$
Thus, one can define {\specinv}s on $\wt{\Ham} (X,\omega)$:
\begin{equation}
\begin{gathered}
 c: \wt{\Ham} (X,\omega) \times QH(X,\omega) \rightarrow \R \\
 c(\wt{\phi},a):= c(H ,a)
\end{gathered}
\end{equation}
where the path $t \mapsto \phi_H ^t$ represents the class of paths $\wt{\phi}$. Similarly, one can define
$$\ell : \wt{\Ham} (X,\omega) \times HF(L) \rightarrow \R . $$

{\hamil} and {\lag} Floer homologies are related by the closed-open and open-closed maps
\begin{equation}
\begin{gathered}
\mathcal{CO}^0 : QH(X,\omega) \to HF(L),\\
\mathcal{OC}^0 :  HF(L) \to QH(X,\omega) ,
\end{gathered}
\end{equation}
 which are defined by counting certain holomorphic curves. The closed-open map $\mathcal{CO}^0$ is a ring homomorphism and the open-closed map $\mathcal{OC}^0$ defines a module action. As they are defined by counting certain holomorphic curves, which have positive $\omega$-energy, they have the following effect on {\specinv}s.
 
 \begin{prop}[{\cite{[BC08],[LZ18],[FOOO19]}}]\label{co oc spec inv}
 Let $H$ be any {\hamil}. 
 \begin{enumerate}
 \item For any $a \in QH(X,\omega)$, we have
 $$c(H,a) \geq \ell (H , \mathcal{CO}^0 (a) ) .$$
\item For any $\alpha \in HF(L)$, we have
 $$\ell (H , \alpha ) \geq  c(H,\mathcal{OC}^0 (\alpha)) .$$
 \end{enumerate}
 \end{prop}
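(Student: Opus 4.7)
The plan is to prove both inequalities at the chain level: I would show that the closed-open and open-closed maps, realized as chain maps between appropriately filtered Floer complexes, are action non-increasing, and then deduce the spectral estimates from the defining infima of $c(H,\cdot)$ and $\ell(H,\cdot)$. Throughout I assume $H$ is non-{\degen}; the general case follows from the Hofer continuity property \eqref{Hofer conti abs} and its {\lag} analogue \eqref{Lag control prop} together with an approximation argument. The filtered complexes $CF^{\tau}(H)$, generated by $1$-periodic orbits of $X_H$ and filtered by the action $\mathcal{A}_H$, and $CF^{\tau}(L,H)$, generated by {\hamil} chords from $L$ to itself and filtered by the {\lag} action $\mathcal{A}_{L,H}$, compute the filtered groups used in the definitions of $c(H,a)$ and $\ell(H,\alpha)$.

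For part (1), I would recall that the chain-level closed-open map $CO_H : CF(H) \to CF(L,H)$ counts rigid Floer solutions on a disk (or half-cylinder) with one interior positive puncture asymptotic to a $1$-periodic orbit $\gamma$ and one boundary puncture asymptotic to a chord $x$, boundary sent into $L$. The standard energy identity $E(u) = \mathcal{A}_H(\gamma) - \mathcal{A}_{L,H}(x) \geq 0$, tracked by the Novikov weighting $T^{\omega(u)}$, forces $CO_H\bigl(CF^{\tau}(H)\bigr) \subseteq CF^{\tau}(L,H)$ for every $\tau \in \R$. A parametrized moduli argument (comparing with the chain-level PSS configurations) shows that $CO_H$ intertwines the PSS chain maps, so on homology it realizes $\mathcal{CO}^0$ after the PSS identifications. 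Now choose any $\tau > c(H,a)$; by definition there exists a cycle $\tilde a \in CF^{\tau}(H)$ representing $PSS_{H;\Lambda}(a)$. Then $CO_H(\tilde a) \in CF^{\tau}(L,H)$ represents $PSS_{L,H}(\mathcal{CO}^0(a))$, hence $\ell(H, \mathcal{CO}^0(a)) \leq \tau$. Letting $\tau \to c(H,a)$ yields inequality (1).

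Part (2) is symmetric. The chain-level open-closed map $OC_H : CF(L,H) \to CF(H)$ counts disks with boundary on $L$ carrying one boundary negative puncture at a chord $x$ and one interior positive puncture at an orbit $\gamma$; the analogous energy identity $E(u) = \mathcal{A}_{L,H}(x) - \mathcal{A}_H(\gamma) \geq 0$ makes $OC_H$ filtration preserving. Since $OC_H$ intertwines PSS and induces $\mathcal{OC}^0$ on homology, the argument of the previous paragraph with the roles of $c$ and $\ell$ exchanged gives $c(H, \mathcal{OC}^0(\alpha)) \leq \ell(H,\alpha)$.

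The main obstacle is the foundational package rather than the formal step: one must construct $CO_H$ and $OC_H$ at the chain level with a coherent choice of Floer data, establish {\transv} for the relevant moduli spaces of disks with boundary on $L$, verify compactness in the monotone setting (ruling out sphere and disk bubbling in the codimension-one strata), and check that both maps intertwine the chain-level PSS. These steps are carried out in \cite{[BC08],[LZ18],[FOOO19]} and comprise the analytical core of the result. Once they are in place, the spectral inequalities are formal consequences of positivity of {\symp} area combined with the definitions of $c$ and $\ell$ as infima of filtration values realizing a given PSS image.
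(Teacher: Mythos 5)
Your argument is correct and is exactly the one the paper relies on: the paper states this proposition as a cited result from \cite{[BC08],[LZ18],[FOOO19]}, justifying it only by the remark that $\mathcal{CO}^0$ and $\mathcal{OC}^0$ count holomorphic curves of positive $\omega$-energy and hence respect the action filtration, which is precisely the chain-level energy-positivity argument you spell out. The analytical foundations you defer to the references are indeed where the paper defers them as well.
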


It is a basic property of the open-closed map (c.f. \cite[Section 2.5.2]{BC12}) that 
\begin{equation}
    \mathcal{OC}^0 (1_L) = [L],
\end{equation}
and thus, by {\propo} \ref{co oc spec inv}, we have
\begin{equation}\label{L spec inv relation}
    \ell (H, 1_L ) \geq  c(H,[L]) .
\end{equation}

\subsection{{\EP} {\qmor}s and (super)heaviness}\label{EP qmors and superheaviness}

Based on {\specinv}s, {\EP} built two theories, namely the theory of (Calabi) {\qmor}s and the theory of (super)heaviness, which we briefly review in this section. 

\textit{Quasimorphisms.} {\EP} constructed a special map on $\wt{\Ham}(X,\omega)$ called the {\qmor} for under some assumptions. Recall that a {\qmor} $\mu$ on a group $G$ is a map to the real line $\R$ that satisfies the following two properties:
\begin{enumerate}
    \item There exists a constant $C>0$ such that 
    $$|\mu(f \cdot g) -\mu(f)-\mu(g)|<C $$
    for any $f,g \in G$.
    
    \item For any $k \in \Z$ and $f \in G$, we have
    $$\mu(f^k)=k \cdot \mu(f) .$$
\end{enumerate} 

The following is {\EP}'s construction of {\qmor}s on $\wt{\Ham}(X,\omega)$.

\begin{theo}[{\cite{[EP03]}}]\label{EP qmor}
Suppose $QH(X,\omega ;\Lambda)$ has a field factor, i.e. 
$$ QH(X,\omega) = Q \oplus A $$
where $Q$ is a field and $A$ is some algebra. Decompose the unit $1_{X}$ of $QH(X,\omega )$ {\wrt} this split, i.e. 
$$1_{X}=e + a .$$
Then, the {\asympt} {\specinv} of $\wt{\phi}$ {\wrt} $e$ defines a {\qmor}, i.e.
\begin{equation}
    \begin{gathered}
        \zeta_{e}:\wt{\Ham}(X,\omega) \longrightarrow \R \\
         \zeta_{e} ( \wt{\phi}) := \lim_{k \to +\infty} \frac{c (\wt{\phi} ^{ k},e )}{k} =\lim_{k \to +\infty} \frac{c (H^{\# k},e )}{k}
    \end{gathered}
\end{equation}
where $H$ is any mean-normalized {\hamil} such that the path $t \mapsto \phi_H ^t $ represents the class $\wt{\phi}$ in $\wt{\Ham}(X,\omega)$.
\end{theo}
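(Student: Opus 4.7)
The plan is to extract the quasimorphism property from two algebraic facts: the element $e$ is an idempotent (because it is the unit of the field factor $Q$, so $e\ast e = e$), and the triangle inequality \eqref{triangle ineq} for spectral invariants. First I would verify the decomposition. Since $Q$ is a field with unit $e_Q$, and $1_X$ is the unit of the direct sum $Q\oplus A$, one has $1_X = e_Q + 1_A$; setting $e:=e_Q$, the identities $e\ast e = e$, $a\ast a = a$, and $e\ast a = 0$ are straightforward consequences of the direct sum decomposition of rings.

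Next, I would show the limit defining $\zeta_e(\wt{\phi})$ exists. Applying the triangle inequality with $a = b = e$, and using $e\ast e = e$, we get
\begin{equation*}
c(\wt{\phi}\cdot\wt{\psi},e) = c(\wt{\phi}\cdot\wt{\psi}, e\ast e) \leq c(\wt{\phi},e) + c(\wt{\psi},e).
\end{equation*}
Specializing to $\wt{\psi}=\wt{\phi}^k$ yields that the sequence $k\mapsto c(\wt{\phi}^k,e)$ is subadditive, so by Fekete's lemma
\begin{equation*}
\zeta_e(\wt{\phi}) = \lim_{k\to\infty}\frac{c(\wt{\phi}^k,e)}{k} = \inf_{k\geq 1}\frac{c(\wt{\phi}^k,e)}{k}
\end{equation*}
exists in $\R\cup\{-\infty\}$; mean-normalization together with the Hofer continuity property \eqref{Hofer conti abs} and the fact that $c(\mathrm{id},e) = \nu(e)$ is finite rule out the value $-\infty$. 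Homogeneity $\zeta_e(\wt{\phi}^n) = n\zeta_e(\wt{\phi})$ is immediate from the definition by passing to a subsequence.

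For the defect bound, the same subadditivity already gives $\zeta_e(\wt{\phi}\cdot\wt{\psi})\leq \zeta_e(\wt{\phi}) + \zeta_e(\wt{\psi})$. For the reverse direction, I would apply the triangle inequality to the factorizations $\wt{\phi} = (\wt{\phi}\wt{\psi})\cdot\wt{\psi}^{-1}$ and $\wt{\psi} = \wt{\phi}^{-1}\cdot(\wt{\phi}\wt{\psi})$, obtaining
\begin{equation*}
c(\wt{\phi}\wt{\psi},e)\geq c(\wt{\phi},e) - c(\wt{\psi}^{-1},e),\qquad c(\wt{\phi}\wt{\psi},e)\geq c(\wt{\psi},e) - c(\wt{\phi}^{-1},e).
\end{equation*}
Thus the quasimorphism property reduces to showing that the symmetrized quantity $\gamma(\wt{\phi},e):=c(\wt{\phi},e)+c(\wt{\phi}^{-1},e)$ is uniformly bounded above on $\wt{\Ham}(X,\omega)$. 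This is where the field hypothesis enters: by Poincaré duality of spectral invariants, $c(\wt{\phi},a)+c(\wt{\phi}^{-1},b)$ controls the Novikov valuation of any nontrivial pairing of $a$ and $b$ via the quantum product. Since $Q$ is a field, $e\ast e = e\neq 0$, so the pairing $\Pi(a,b):= \text{coefficient of } e \text{ in } a\ast b$ is nontrivial on $(e,e)$, and one obtains a uniform upper bound on $\gamma(\wt{\phi},e)$ independent of $\wt{\phi}$. Dividing the two reverse inequalities above by $k$ after replacing $\wt{\phi},\wt{\psi}$ with $\wt{\phi}^k,\wt{\psi}^k$, the boundedness of $\gamma$ forces the error terms $c(\wt{\psi}^{\mp k},e)/k$ to remain bounded, yielding $\zeta_e(\wt{\phi}\cdot\wt{\psi})\geq \zeta_e(\wt{\phi}) + \zeta_e(\wt{\psi}) - C$ for a uniform constant $C$.

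The main obstacle is the uniform upper bound on $\gamma(\wt{\phi},e)$: the triangle inequality alone only gives a lower bound $\gamma(\wt{\phi},e)\geq\nu(e)$, so one genuinely needs the Poincaré duality pairing of spectral invariants combined with the invertibility of $e\ast e$ in the field factor $Q$. Once that bound is established, the rest of the argument is a routine application of the triangle inequality and Fekete's lemma.
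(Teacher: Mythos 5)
The paper does not prove this theorem; it cites it directly from Entov--Polterovich \cite{[EP03]} and uses it as a black box. So your proposal is to be compared not against a proof in the text but against the argument in \cite{[EP03]}, and your reconstruction correctly identifies its skeleton: idempotence $e\ast e=e$ turns the triangle inequality \eqref{triangle ineq} into subadditivity of $k\mapsto c(\wt{\phi}^k,e)$, Fekete's lemma gives existence and homogeneity of the limit, and the only substantive point is the bounded defect, which reduces (as you say, via $\wt{\phi}=(\wt{\phi}\wt{\psi})\wt{\psi}^{-1}$) to a uniform upper bound on $\gamma(\wt{\phi},e):=c(\wt{\phi},e)+c(\wt{\phi}^{-1},e)$.

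The step you leave vague is exactly the one where all the content lives, and the way you set it up contains a slip. Poincar\'e duality of spectral invariants relates $c(\wt{\phi},a)$ to $-\inf\{c(\wt{\phi}^{-1},b): \Pi(a,b)\neq 0\}$ where $\Pi$ is the Frobenius pairing $\Pi(a,b)=\langle a\ast b,[X]\rangle$, \emph{not} the ``coefficient of $e$ in $a\ast b$'' pairing you introduce (and indeed $\Pi(e,e)$ need not be nonzero, since $e$ is typically a sum of classes of various degrees whose top-degree component can vanish). Combined with the triangle inequality this duality only gives the \emph{lower} bound $\gamma(\wt{\phi},e)\geq \nu(e)$ directly. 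Obtaining the \emph{upper} bound requires comparing $c(\wt{\phi},e)$ with $c(\wt{\phi},b)$ for a near-optimal $b$ in the duality formula, which is where the field structure of $Q$ is genuinely used: if $\Pi(e,b)\neq 0$, the $Q$-component $e\ast b$ is a nonzero (hence invertible) element of the field $Q$, one writes $e=(e\ast b)\ast(e\ast b)^{-1}$ and applies the triangle inequality twice, and the nondegeneracy of the Frobenius form on the summand $Q$ is what controls the resulting valuation terms uniformly. You flag this as ``where the field hypothesis enters'' but do not carry it out, and the pairing you substitute would not make the duality formula true. So the proposal is a faithful outline of the Entov--Polterovich route rather than a complete argument.
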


\begin{remark}
By slight abuse of notation, we will also see $\zeta_{e}$ as a function on the set of time-independent {\hamil}s:
\begin{equation}
    \begin{gathered}
        \zeta_{e}:C^{\infty}(X) \longrightarrow \R \\
         \zeta_{e} ( H) := \lim_{k \to +\infty} \frac{c (H^{\# k},e )}{k}.
    \end{gathered}
\end{equation}
\end{remark}

\begin{remark}
  The {\lag} {\specinv}s do not appear in the result of {\EP}, but we define the \textit{asymptotic {\lag} {\specinv}s}, as we will use them later on in the proofs. 

\begin{equation}\label{asymp lag spec inv}
    \begin{gathered}
        \ovl{\ell}_L :\wt{\Ham}(X,\omega) \longrightarrow \R \\
         \ovl{\ell}_L := \lim_{k \to +\infty} \frac{ \ell ( \wt{\phi} ^{ k},1_L )}{k}
    \end{gathered}
\end{equation}  
\end{remark}

As mentioned in the introduction, {\EP} {\qmor}s are useful to study the Hofer geometry. For example, in some cases {\EP} {\qmor}s on $\wt{\Ham}(X,\omega)$ descend to $\Ham(X,\omega)$, e.g. when $X=S^2,\C P^2, S^2 \times S^2$. Denote one by $\zeta_{e}: \Ham(X,\omega) \to \R$. Then, by using the homogeneity of $\zeta_{e}$ and the Hofer Lipschitz continuity, we can prove the Hofer diameter conjecture by

\begin{equation}
\begin{aligned}
 \lim_{k \to +\infty} d_{\text{Hof}}(\id, \phi^k) & \geq \lim_{k \to +\infty} |\zeta_{e}(\phi^k)| =\lim_{k \to +\infty} k \cdot |\zeta_{e}(\phi)| =+\infty.
\end{aligned}
\end{equation}

\textit{Superheaviness.} {\EP} introduced a notion of {\symp} rigidity for subsets in $(X,\omega)$ called (super)heaviness.

\begin{definition}[{\cite{[EP09]},\cite{[EP06]}}]\label{def of heavy}
Take an idempotent $e \in QH(X,\omega )$ and denote the {\asympt} {\specinv} {\wrt} $e$ by $\zeta_{e}$. A subset $S$ of $(X,\omega)$ is called
\begin{enumerate}
    \item $e$-heavy if for any time-independent {\hamil} $H:X \to \R$, we have
$$  \inf_{x\in S} H(x)  \leq \zeta_{e} ( H) , $$

\item $e$-{\suphv} if for any time-independent {\hamil} $H:X \to \R$, we have
$$ \zeta_{e} ( H) \leq \sup_{x\in S} H(x) . $$

\end{enumerate}
\end{definition} 

\begin{remark}
    Note that if a set $S$ is $e$-{\suphv}, then it is also $e$-heavy. 
\end{remark}

The following is an easy corollary of the definition of {\suphvness} which is useful.

\begin{prop}[{\cite{[EP09]}}]\label{suphv constant}
Assume the same condition on $QH(X,\omega )$ as in Theorem \ref{EP qmor}. Let $S$ be a subset of $X $ that is $e$-{\suphv}. For a time-independent {\hamil} $H:X \to \R$ whose restriction to $S$ is constant, i.e. $H|_{S}\equiv r,\ r\in \R$, we have
$$\zeta_e (H)=r .$$
In particular, two disjoint subsets of $(X,\omega)$ cannot be both $e$-{\suphv}.
\end{prop}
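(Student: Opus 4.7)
The plan is to read off both inequalities directly from the (super)heaviness definitions. First, by the $e$-{\suphvness} of $S$, Definition \ref{def of heavy}(2) applied to $H$ yields
$$\zeta_e(H) \leq \sup_{x\in S} H(x) = r.$$
For the reverse inequality, I would invoke the fact recorded in the remark immediately after Definition \ref{def of heavy}, namely that every $e$-{\suphv} set is also $e$-heavy. Then Definition \ref{def of heavy}(1) applied to $H$ gives
$$r = \inf_{x\in S} H(x) \leq \zeta_e(H),$$
and combining the two bounds yields $\zeta_e(H) = r$, as required.

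For the ``in particular'' clause I would argue by contradiction. Suppose $S_1, S_2 \subset X$ are disjoint and both $e$-{\suphv}. Without loss of generality one may assume $S_1$ and $S_2$ are closed: the {\suphvness} inequality only involves the continuous {\hamil} $H$, so $\sup_{S_i} H = \sup_{\overline{S_i}} H$, and $e$-{\suphvness} automatically passes to the closure; if the closures still fail to be disjoint, we may shrink $S_1, S_2$ to $e$-{\suphv} subsets with disjoint closures by replacing each by a small closed neighborhood inside a set where $H$ is constant (this is where the first part of the proposition is used). Once $\overline{S_1}$ and $\overline{S_2}$ are disjoint closed subsets of the closed {\symp} {\mfd} $X$, a standard smooth-Urysohn / partition-of-unity construction produces a time-independent {\hamil} $H : X \to \R$ with $H|_{S_1}\equiv 0$ and $H|_{S_2}\equiv 1$. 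Applying the first part of the proposition to $S_1$ and to $S_2$ separately then forces $\zeta_e(H)=0$ and simultaneously $\zeta_e(H)=1$, the desired contradiction.

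The argument is essentially an unpacking of Definition \ref{def of heavy} together with the remark that {\suphvness} implies heaviness. There is no serious obstacle; the only minor subtlety is constructing a smooth separating function, which is routine on a smooth closed {\mfd} with disjoint closed subsets.
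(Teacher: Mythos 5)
Your argument is correct and is essentially the paper's: the first part combines the {\suphvness} upper bound with the heaviness lower bound (the paper dismisses this as immediate from the definitions), and the second part is exactly the paper's separating-Hamiltonian contradiction, with $H|_{S_1}\equiv 0$, $H|_{S_2}\equiv 1$ forcing $\zeta_e(H)$ to equal both $0$ and $1$.

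The one place you go beyond the paper --- the reduction to disjoint closures --- is the weak spot of the write-up. Passing to closures is fine (for continuous $H$ one has $\sup_{\overline{S}}H=\sup_{S}H$ and $\inf_{\overline{S}}H=\inf_{S}H$, so $\overline{S}$ inherits $e$-{\suphvness} from $S$), but your fallback when the closures still intersect does not work: shrinking a {\suphv} set does not preserve {\suphvness} (only enlarging does), and ``a small closed neighborhood inside a set where $H$ is constant'' is circular, since $H$ is precisely the function you have not yet built. In fact, for genuinely arbitrary subsets the ``in particular'' clause is literally false: a closed $e$-{\suphv} {\lag} $L$ and its dense complement $X\setminus L$ are disjoint, and both are $e$-{\suphv} (density gives $\sup_{X\setminus L}H=\max_X H\geq \zeta_e(H)$). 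The correct resolution is not to patch arbitrary sets but to invoke the standing convention (as in Entov--Polterovich, and as in every application in this paper, where the sets are {\lagsph}s) that (super)heavy sets are closed; then smooth Urysohn produces the separating {\hamil} directly and the paper's two-line contradiction goes through.
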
 

\begin{proof}
    The first part is an immediate consequence of the definition of (super)heaviness. As for the second part, suppose we have two disjoint sets $A,B$ in $(X,\omega)$ that are both $e$-{\suphv}. Consider a {\hamil} $H$ that is
    $$H|_A=0,\ H|_B=1.$$
    Then, by {\suphvness}, we have
    $$1= \inf_{x\in B} H(x) \leq \zeta_{e}(H) \leq \sup_{x \in A} H(x) =0 ,$$
    which is a contradiction. 
\end{proof}

We end this section by giving a criterion for heaviness, proved by {\FOOO} (there are earlier results with less generality, c.f. \cite{[Alb05]}) using the closed-open map
$$\mathcal{CO}^0 : QH (X,\omega) \to HF  (L)  . $$

\begin{theo}[{\cite[Theorem 1.6]{[FOOO19]}}]\label{CO map heavy}

Assume $HF ( L ) \neq 0$. If 
$$\mathcal{CO}^0 (e)\neq 0$$
for an idempotent $e\in QH (X,\omega)$, then $L$ is $e$-heavy.
\end{theo}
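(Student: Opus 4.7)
The plan is to combine Proposition \ref{co oc spec inv}(1) with a Lagrangian analogue of the Hofer continuity (control) property, and then pass to the asymptotic limit defining $\zeta_e$. Since $\mathcal{CO}^0$ is a unital ring homomorphism and $e^2 = e$, the hypothesis $\mathcal{CO}^0(e) \neq 0$ guarantees that $\alpha := \mathcal{CO}^0(e) \in HF(L)$ is a nonzero idempotent; this is the class to which the Hamiltonian invariants will be compared.

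First I would exploit autonomy: for a time-independent $H$ the flow $\phi_H^t$ preserves $H$, so the telescoping formula $H \# G (t,x) = H_t(x) + G_t((\phi_H^t)^{-1}x)$ iterates to $H^{\#k}(t,x) = kH(x)$. In particular $c(H^{\#k}, e) = c(kH, e)$, and Proposition \ref{co oc spec inv}(1) applied at the autonomous Hamiltonian $kH$ yields
$$c(kH, e) \;\geq\; \ell(kH, \alpha).$$

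Next I would establish, for any Hamiltonian $F$ and any nonzero class $\alpha \in HF(L)$, the Lagrangian control estimate
$$\ell(F, \alpha) \;\geq\; \int_0^1 \min_{x \in L} F_t(x)\, dt + \nu(\alpha).$$
This follows by combining the Lagrangian Hofer continuity inequality for $\ell(\cdot, \alpha)$ at a fixed class (the analogue of \eqref{Hofer conti abs}) with the valuation property $\ell(0, \alpha) = \nu(\alpha)$, in exact parallel with how \eqref{Lag control prop} is obtained for the case $\alpha = 1_L$. Specialising to $F = kH$ gives $\ell(kH, \alpha) \geq k \min_{x\in L} H + \nu(\alpha)$. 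Chaining these inequalities, dividing by $k$ and letting $k \to \infty$, the fixed additive constant $\nu(\alpha)$ drops out and one obtains
$$\zeta_e(H) \;=\; \lim_{k \to \infty} \frac{c(H^{\#k}, e)}{k} \;\geq\; \min_{x \in L} H,$$
which is precisely the definition of $e$-heaviness of $L$.

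\textbf{Main obstacle.} The technically nontrivial ingredient is upgrading the Lagrangian control property \eqref{Lag control prop} from the unit class $1_L$ to the non-unit idempotent $\alpha = \mathcal{CO}^0(e)$. The extension follows the formal pattern ``Hofer continuity plus valuation'', but it rests on the full filtered Lagrangian Floer machinery developed in \cite{[LZ18], [FOOO19]}, and one must check that $\alpha$ has finite Novikov valuation, which is ensured by $\alpha \neq 0$. Once this estimate is in hand, the autonomy identity $H^{\#k} = kH$, the closed--open comparison of Proposition \ref{co oc spec inv}(1), and the limit argument are entirely formal.
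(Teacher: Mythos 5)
The paper does not prove this statement: it is quoted as \cite[Theorem 1.6]{[FOOO19]} and used as a black box, so there is no internal proof to compare against. Your sketch is nevertheless essentially the standard argument for such heaviness criteria, and its skeleton is sound: compare $c(H^{\#k},e)$ with $\ell(H^{\#k},\mathcal{CO}^0(e))$ via Proposition \ref{co oc spec inv}(1), bound the Lagrangian spectral invariant from below using a control estimate at the class $\alpha=\mathcal{CO}^0(e)$, exploit $H^{\#k}=kH$ for autonomous $H$, and let $k\to\infty$ so that the additive term $\nu(\alpha)$ disappears.

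One imprecision is worth flagging. You present the bound $\ell(F,\alpha)\geq\int_0^1\min_{x\in L}F_t\,dt+\nu(\alpha)$ as a consequence of ``Hofer continuity plus the valuation property''. That understates what is needed: the Hofer continuity \eqref{Hofer conti abs} (and its Lagrangian analogue) is phrased with $\min_X$ and $\max_X$ over the whole manifold, and if one invoked only that, the argument would yield $\zeta_e(H)\geq\inf_X H$ --- true for any $e$, but vacuous, and not heaviness of $L$. The genuinely Lagrangian input is the control property for $\ell$: the filtration shift of the Lagrangian PSS comparison is governed by the oscillation of $H$ \emph{along} $L$, not over $X$, and one needs this for arbitrary nonzero classes $\alpha\in HF(L)$, not only for $1_L$ as stated in \eqref{Lag control prop}. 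That extension is the substantive input supplied by \cite{[LZ18]} and \cite{[FOOO19]}; your pointer to those references is the right one, but calling it Hofer continuity misattributes the mechanism. With that correction, the proposal is a faithful reconstruction of the standard proof.
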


\begin{remark}
When $\zeta_e$ is {\homo}, e.g. when $e$ is a unit of a field factor of $QH (X,\omega)$ and $\zeta_e$ is an {\EP} {\qmor}, then heaviness and {\suphvness} are equivalent so Theorem \ref{CO map heavy} will be good enough to obtain the {\suphvness} of $L$.
\end{remark}

\subsection{Semi-simplicity of the {\QH}}\label{Semi-simplicity of QH}

In this section, we review the notion of semi-simplicity of the {\QH}, both in the context of algebraic and {\symp} geometry. Indeed, the semi-simplicity of the {\QH} is an algebraic structure that has been studied and used widely across algebraic geometry \cite{[BM04],[Dub96],[Man99]} and {\symp} topology \cite{[BC],[EP03],[FOOO10]}, even though the definitions are slightly different. See the final paragraph for the conclusion. Recall that the (small) {\QH}\footnote{We emphasize that in this paper, {\QH} always refers to the small one unless mentioned otherwise.} of a {\symp} {\mfd} $(X,\omega)$ is semi-simple when it splits into a direct sum of finite fields:
$$QH(X,\omega) = \bigoplus_{1 \leq j \leq k} Q_j$$
where each $Q_j$ is a field. For examples of {\symp} {\mfd}s whose {\QH}s are semi-simple, we refer to Section \ref{context}.

 \begin{remark}
     \begin{enumerate}
         \item By the definition of semi-simplicity, it follows immediately that if $QH(X,\omega)$ is semi-simple, then there is no nilpotent in $QH(X,\omega)$.

         \item  Also, note that {\symp} {\mfd}s with semi-simple {\QH} provide examples to which one can apply {\EP}'s construction of {\qmor}s ({\thm} \ref{EP qmor}).

         \item In this paper, we take the universal Novikov field $\Lambda$ to define the {\QH}, but the {\QH} can defined by other {\coeff} fields, e.g. the field of Laurent series. The choice of a {\coeff} field does not impact the semi-simplicity \cite[{\propo} 2.1]{[EP08]}.
     \end{enumerate}
 \end{remark}

In fact, Usher proves that the existence of one {\symp} form for which the (small) {\QH} is semi-simple implies semi-simplicity for a generic choice of a {\symp} form \cite[{\propo} 7.11]{[Ush11]} (However, having semi-simple {\QH} for a generic {\symp} form does not imply that for all {\symp} forms we have semi-simple {\QH} \cite{[OT09]}). Usher also proves that this generic semi-simplicity for the (small) {\QH} implies the generic semi-simplicity for the big {\QH} \cite[{\propo} 7.5]{[Ush11]}.\footnote{For the definition of the generic semi-simplicity for the big (resp. small) {\QH}, see \cite[Section 7.2]{[Ush11]} (resp. \cite[Definition 7.4]{[Ush11]}).} In practice, the equivalent properties in \cite[{\thm} 7.8, {\propo} 7.11]{[Ush11]} are easier to understand and are more useful.

Moreover, he proves that the notion of semi-simplicity of the big {\QH} used in the algebraic geometry community, e.g. \cite{[BM04],[Dub96],[Man99]}, which is stated in terms of the Frobenius {\mfd}, is equivalent to the {\symp} definition of the generic semi-simplicity of the big {\QH} \cite[Section 7.3.3]{[Ush11]}.

In summary, the monotone semi-simplicity, namely the assumption of {\thm} \ref{no DE config}, implies the semi-simplicity of the big {\QH} used in the algebraic geometry community. Thus, strictly speaking, the assumption of {\thm} \ref{no DE config} is stronger than the notion of semi-simplicity commonly used in algebraic geometry.

\subsection{Degeneration}\label{Degeneration}
In this section, we review some basics of degeneration. A recommended reference for the topic of this section is \cite{[Eva]}.

\begin{definition}\label{def of degen}
    Let $X$ be a smooth variety. A {\degtion} of $X$ is a flat family $\pi: \mathcal{X} \to \C$ such that 
    \begin{itemize}

        \item The only singular fiber is $X_0:= \pi^{-1}(0)$.

\item Some regular fiber is isomorphic to $X$.

        \item The variety $\mathcal{X}$ is smooth away from the {\sing} locus of $X_0$.
  
   \end{itemize}
\end{definition}

If there is a {\degtion} of a variety $X$ such that the central fiber is $X_0:= \pi^{-1}(0)$, we say that $X$ {\degen}s to $X_0$. Note that up to here, the notion of {\degtion} is completely in the realm of {\AG}, but if we are in a situation where the following is valid, then we can start seeing the variety $X$ as a {\symp} {\mfd}.

\begin{itemize}
    \item There is a relatively ample line bundle $\mathcal{L} \to \mathcal{X}$ {\wrt} $\pi: \mathcal{X} \to \C$ (also said $\pi$-relatively ample line bundle).
\end{itemize}

\begin{remark}\label{gorenstein line bdl}
In the proof of {\thm} \ref{no DE config}, we will consider the anti-canonical bundle $- \Omega_{\mathcal{X}}$ (sometimes also written $- K_{\mathcal{X}}$) on $\mathcal{X}$. We will make this precise, as the variety $\mathcal{X}$ is {\sing}. Recall that if a complex $n$-dimensional variety $Y$ is smooth, then the canonical line bundle is $\Omega_Y:= \bigwedge^{n} T^\ast Y $. When $Y$ is {\sing}, the same object cannot be defined. In order to circumvent this problem, Grothendieck introduced the dualizing complex $\omega_Y ^{\bullet}$, which turns out to be a dualizing sheaf $\omega_Y =\omega_Y ^{\bullet}$ if $Y$ is Cohen--Macaulay. Furthermore, if $Y$ is Gorenstein, then $\omega_Y$ is an invertible sheaf, i.e. a line bundle. Note that if $Y$ is smooth, we have $\omega_Y = K_Y$, thus the dualizing sheaves can be considered as the canonical bundles for {\sing} varieties. 

In this paper, we will only be concerned with {\sing} varieties $\mathcal{X}, X_0 $ that have at most isolated {\hypsurf} {\sing}ities, which are Gorenstein, so we can define the line bundles $ \omega_{\mathcal{X}}, \omega_{X_0}$ as well as their inverse line bundles $- \omega_{\mathcal{X}},- \omega_{X_0}$. For a Gorenstein variety $Y$, we say that it is Fano if the line bundle $- \omega_Y $ is ample. 
\end{remark}

The $\pi$-relative ample line bundle $\mathcal{L} \to \mathcal{X}$ defines the following map:
\begin{equation}
\begin{tikzcd}
X_t \arrow[hookrightarrow]{d}{i_t} \arrow{r}{f_t} &  \{ t \} \times \C P^n \arrow[hookrightarrow]{d}{}  &\\
\mathcal{X} \arrow[r,"f"]  \arrow[d,"\pi"] & \mathbb{P}(\pi_\ast \mathcal{L})= \C \times \C P^n \arrow[dl,"\text{pr}_1"] \\
 \C &  &.
\end{tikzcd}
\end{equation}

The restriction of the form $f^\ast \omega_{\FS}$ on each fiber $X_t,\ t \neq 0$ gives a {\symp} form on it. Moreover, by using the form $f^\ast \omega_{\FS}$ on $\mathcal{X}$, one can define a {\symp} parallel transport for $\pi: \mathcal{X} \to \C$, and from a standard argument, it follows that all the smooth fibers are {\symplectotic} {\wrt} the {\symp} form $\omega_t:=(f|_{X_t})^\ast \omega_{\FS} $ on $X_t$ (c.f. \cite[Lemma 1.1]{[Eva]}).

One point we need to be careful about is that if we are interested in the monotone {\symp} form on $X=X_1$, then we need to find a $\pi$-relatively ample line bundle $\mathcal{L} \to \mathcal{X}$ that restricts to the anti-canonical bundle on $X_t:=\pi^{-1}(t),\ t\neq 0$. When the anti-canonical bundle $-K_X$ of a variety $X$ is ample, i.e. $X$ is Fano, the sections of some power $(-K_X)^{\otimes m},\ m \in \N$ give rise to an embedding
$$f: X \hookrightarrow \mathbb{P} (V), $$
where $V$ is the dual of the space of sections, and the pull-back $f^\ast \omega_{\text{FS}}$ is monotone. To see this,
\begin{equation}\label{Fano is monotone}
    \begin{aligned}
        f^\ast [\omega_{\text{FS}}] &=  f^\ast c_1( \mathcal{O}(1)) \\
        & = c_1( ( (\bigwedge^n T^\ast X)^{-1} ) ^{\otimes m}) \\
        & = m \cdot c_1((\bigwedge^n T^\ast X)^{-1}) \\
        & =  -m \cdot c_1(\bigwedge^n T^\ast X) \\
         & =  m \cdot c_1(TX) .
    \end{aligned}
\end{equation}
This is precisely the point where one needs to be careful about when reducing {\thm} \ref{no DE config} to {\thm} \ref{no DE config SG}. See the proof of {\thm} \ref{no DE config} on this.

\begin{remark}
    In this paper, we only study Fano varieties/monotone {\symp} {\mfd}s, but for general type varieties/negative monotone {\symp} {\mfd}s, there are examples of {\degtion}s in \cite{[ES20],[EU21]} for which the relative canonical bundle is not relatively ample. Thus, to study these {\degtion}s one needs to use a {\symp } form that is not negative monotone.
\end{remark}

\subsection{Isolated hypersurface {\sing}ities}\label{hypsing}
In this section, we gather some facts about the isolated hypersurface {\sing}ities. A recommended reference with more details is \cite[Section 1,2]{[Kea15]}.

As we mentioned in the introduction (Section \ref{context}), isolated hypersurface {\sing}ities have been used to produce {\lagsph}s. The modality is a non-negative integer associated to an isolated hypersurface {\sing}ity, which could be thought of as the number of complex parameters of the miniversal deformation space (we refer to \cite{[Kea15]} for more details). Arnold classified isolated hypersurface {\sing}ities up to modality two. The modality zero case is precisely the simple {\sing}ities, which have three types $A_m$, $D_m\ (m\geq 4)$, and $E_m\ (m=6,7,8)$ and they are locally expressed as
\begin{equation}\label{simple sing poly}
    x_1 ^2 + x_2 ^2 + \cdots + x_{n-1} ^2+ p(y,z)=0
\end{equation}
where $p(y,z)$ is 
$$y^2 + z^{m+1},\ 
     y z ^2 +     z ^{m-1} ,\
 y ^3  +z ^{4} ,\
y ^3 + y z ^{3} ,\
 y^3 + z ^{5} ,$$
for $A_m$, $D_m$, $E_6$, $E_7$, and $E_8$, respectively. Now, suppose the variety $X_0$ has an $A_m$ {\sing}ity. The vanishing cycle of the $A_m$ {\sing}ity in its smoothing $(X,\omega)$ forms a collection of {\lagsph}s 
$$\mathscr{S}_{A_m}:= \{S_j\}_{1 \leq j \leq m}$$
satisfying the following intersection property:
    \begin{equation}\label{intersection Am}
 \# \left( S_i \cap S_j \right) =
    \begin{cases}
        1 \text{ if $|i-j|=1$} \\
        0 \text{ otherwise}.
    \end{cases}
\end{equation}

One can see the intersection pattern of \eqref{intersection Am} in the Dynkin diagram of the type $A_m$ (Figure \ref{ADE dynkin diagram}), where each dot in the diagram corresponds to a {\lagsph} and a segment between two dots implies that the {\lagsph}s corresponding to the dots intersect transversally at one point. We call such a collection of {\lagsph}s an $A_m$ {\config}. Similarly, we define $D_m\ (m\geq 4)$, and $E_m\ (m=6,7,8)$ {\config}s of {\lagsph}s to be the collections of {\lagsph}s that satisfy the intersection patterns of the $D_m$ and $E_m$ type Dynkin diagrams, respectively (Figure \ref{ADE dynkin diagram}). These {\config}s appear as the vanishing cycles of {\sing}ities of type $D_m\ (m\geq 4)$, and $E_m\ (m=6,7,8)$, respectively.

As for the isolated hypersurface {\sing}ities of modality one, which were also classified by Arnold, there are the following three types:

    \begin{enumerate}
        \item (parabolic or simple elliptic {\sing}ities) $\wt{E}_{6},\wt{E}_{7},\wt{E}_{8}$.
\item (hyperbolic {\sing}ities) $T_{p,q,r},$ where
\begin{equation}\label{Tpqr poly}
    \begin{gathered}
        T_{p,q,r} = x_1 ^2 + x_2 ^2 + \cdots + x_{n-2} ^2+ h(x, y,z) ,\\
h(x, y,z) = x^p + y^q +z^r + a xyz, \ a\neq 0 
    \end{gathered}
\end{equation}
with integers $p,q,r$ such that 
$$\frac{1}{p}+\frac{1}{q}+\frac{1}{r} <1 .$$
Note that the three pairs $(p,q,r)=(3,3,3),(2,4,4),(2,3,6)$, which are the solutions to $\frac{1}{p}+\frac{1}{q}+\frac{1}{r} =1 $, are precisely the three parabolic {\sing}ities; 
$$\wt{E}_{6}=T_{3,3,3} ,\ \wt{E}_{7}=T_{2,4,4},\ \wt{E}_{8}=T_{2,3,6}.$$

\item 14 exceptional {\sing}ities.
    \end{enumerate}

Other than the simple {\sing}ities and the three types of modality one {\sing}ities, all the other isolated hypersurface {\sing}ities have modality greater than or equal to two. 

\begin{example}\label{Brieskorn Pham}
    There are other famous classes of {\sing}ities such as the \textit{Brieskorn--Pham {\sing}ities}. The Brieskorn--Pham {\sing}ities are isolated hypersurface {\sing}ities which include a part of simple singularities, parabolic (or simple elliptic) singularities, the 14 exceptional unimodal singularities. Thus, Brieskorn--Pham {\sing}ities are covered in {\thm}s \ref{no DE config}, \ref{no DE config SG}. See \cite{[FU11],[Kea21]} for some {\symp} results related to Brieskorn--Pham {\sing}ities.
\end{example}

In \cite{[Kea15]}, Keating executed a detailed study of the vanishing cycles and the Milnor fibers for these {\sing}ities. We recall the definition of the \textit{Milnor fiber}.

\begin{definition}\label{def Milnor fiber}
    The Milnor fiber of a {\hypsurf} {\sing}ity $h=0$, where $h$ is the polynomial expressing the {\sing}ity (e.g. \eqref{simple sing poly}, \eqref{Tpqr poly}), is the intersection of the affine {\hypsurf} $h^{-1}(t) \subset \C ^{n+1}$ for a small $|t|$ and a small ball $B(0;\varepsilon)$ (the ball of radius $\eps>0$ around the origin), i.e. $h^{-1}(t) \cap B(0;\varepsilon)$.\footnote{The definition depends on the choices of $t$ and $\varepsilon$, but it is unique up to {\diffeo}. From a {\symp} viewpoint, different choices of $t$ and $\varepsilon$ will give non-symplectomorphic Milnor fibers, but they both have completions that are symplectomorphic.}
\end{definition} 

Note that Milnor fibers are Liouville domains. We collect some of Keating's results that will be used in the proof of {\thm} \ref{no DE config}.

\begin{prop}\label{parab dynkin}
     The vanishing cycles of the parabolic {\sing}ities form a {\config} of {\lagsph}s with the intersection property as in the Dynkin diagram of Gabrielov \cite[Figure 4]{[Kea15]}. 
\end{prop}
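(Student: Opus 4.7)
The statement asserts that for each of the three parabolic singularities $\widetilde{E}_6 = T_{3,3,3}$, $\widetilde{E}_7 = T_{2,4,4}$, $\widetilde{E}_8 = T_{2,3,6}$ one can produce a configuration of Lagrangian spheres in the Milnor fiber whose intersection graph matches the Dynkin diagram of Gabrielov reproduced in \cite[Figure 4]{[Kea15]}. The plan is the classical Morsification-plus-distinguished-paths strategy, carried out in the symplectic category.

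First, I would fix one of the three parabolic singularities, write its defining polynomial $h(x_1,\ldots,x_{n+1})$ (with an isolated singularity at the origin), and consider a small Morsification $\tilde{h}_{\varepsilon} := h + \varepsilon \cdot g$ with $g$ a generic polynomial of low degree chosen so that $\tilde{h}_{\varepsilon}$ has precisely $\mu$ non-degenerate critical points $p_1,\ldots,p_{\mu}$ inside a fixed Milnor ball $B(0;\eps_0)$, with pairwise distinct critical values $c_1,\ldots,c_{\mu}$. Here $\mu = p + q + r - 1$ is the Milnor number of $T_{p,q,r}$, so $\mu = 8, 9, 10$ for $\widetilde{E}_6, \widetilde{E}_7, \widetilde{E}_8$ respectively. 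A regular fiber $\tilde{h}_{\varepsilon}^{-1}(\ast) \cap B(0;\eps_0)$ over a nearby regular value $\ast$ is diffeomorphic to the Milnor fiber of $h$ and inherits a Liouville structure from the restriction of the standard Kähler form on $\mathbb{C}^{n+1}$.

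Next, I would fix a distinguished system of non-intersecting paths $\gamma_1,\ldots,\gamma_{\mu}$ in $\mathbb{C}$ from $\ast$ to $c_1,\ldots,c_{\mu}$, ordered cyclically around $\ast$. Each path $\gamma_i$ determines a Lefschetz thimble ending at $p_i$, and the boundary of this thimble is a vanishing Lagrangian sphere $V_i \subset \tilde{h}_{\varepsilon}^{-1}(\ast)$; the collection $\{V_i\}_{i=1}^{\mu}$ is a distinguished basis of the middle-dimensional vanishing homology. The geometric intersection numbers $V_i \cdot V_j$ may then be computed via Picard--Lefschetz theory, or via A'Campo's real morsification method when the Morsification can be chosen so that all critical points are real and the real-analytic graph of $\tilde{h}_{\varepsilon}$ is transparent.

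To finish one would show that some choice of Morsification and distinguished paths realizes precisely the intersection pattern drawn in \cite[Figure 4]{[Kea15]}. The computation itself was carried out by Gabrielov and reviewed in \cite{[Kea15]}; the bulk of the proof is therefore this explicit case-by-case combinatorial check for $T_{3,3,3}, T_{2,4,4}, T_{2,3,6}$. The main obstacle is that a distinguished basis is not canonical: different systems of paths are related by braid-group (Hurwitz) moves, which alter the intersection graph, so one must exhibit the particular choice that yields Gabrielov's diagram rather than one of its Hurwitz translates. Once such a choice is fixed, the Lagrangian/sphere nature of the $V_i$ and the stated transverse single-point intersections follow from the standard symplectic theory of vanishing cycles in Lefschetz fibrations.
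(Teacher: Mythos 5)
The paper does not actually prove this proposition; it is stated as a cited fact, deferring to Keating \cite[Figure~4]{[Kea15]}, who in turn reviews Gabrielov's original computation of the Dynkin diagrams of the unimodal singularities. Your sketch is a correct reconstruction of the classical argument lying behind that citation: Morsify the singularity, fix a distinguished system of non-intersecting paths from the base point to the critical values, take the vanishing cycles bounding the resulting Lefschetz thimbles --- which are Lagrangian spheres in the Milnor fiber, intersecting transversally in one point for each geometric intersection, by the standard symplectic theory of Lefschetz fibrations --- and compute the intersection pattern via Picard--Lefschetz or A'Campo's real morsification. You also correctly identify the genuine subtlety, namely that distinguished bases are only well-defined up to Hurwitz (braid) moves, so a specific choice of Morsification and paths must be exhibited to land on Gabrielov's particular diagram rather than one of its Hurwitz translates; this case-by-case combinatorial work is exactly what is outsourced to Gabrielov and Keating. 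Two minor remarks: your Milnor-number count $\mu = p+q+r-1$, yielding $8, 9, 10$ for $T_{3,3,3}, T_{2,4,4}, T_{2,3,6}$, is correct and matches the vertex counts in \cite[Figure~4]{[Kea15]}; and for the role this proposition plays in the proof of Theorem~\ref{no DE config} (the positive-modality case), the only feature actually used is that each of Gabrielov's diagrams contains a vertex adjacent to three pairwise non-adjacent vertices, i.e.\ a $D_4$-subconfiguration, so even a partial verification restricted to that subdiagram would suffice there.
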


\begin{prop}[{\cite[Lemma 2.12 (resp. 2.13), {\cor} 2.17]{[Kea15]}}]\label{reduction to parab}
     Take any isolated hypersurface {\sing}ity with positive modality. Then, the Milnor fiber (resp. vanishing cycles) of one of the three parabolic {\sing}ities $\wt{E}_{6},\wt{E}_{7},\wt{E}_{8}$ can be {\symp}ally embedded to the Milnor fiber (resp. the vanishing cycles) of the taken isolated hypersurface {\sing}ity with positive modality.
\end{prop}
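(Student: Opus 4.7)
The plan is to base the argument on Arnold's adjacency hierarchy for isolated hypersurface singularities, combined with the standard principle that adjacency $S' \leftarrow S$ (i.e.\ $S'$ appears as a specialization of $S$ in its miniversal deformation) produces a symplectic embedding of the Milnor fiber of $S'$ into that of $S$, together with an embedding of the corresponding distinguished collections of vanishing cycles. This last principle is a classical consequence of the local conical structure of Milnor fibrations: one picks a Morsification of $S$ that displays a critical value of type $S'$, then concentrates the remaining critical values near the boundary of a Milnor ball so that a sub-Milnor ball around the $S'$-critical value produces the required symplectic embedding. Granted this, the proposition reduces to the purely combinatorial claim that every isolated hypersurface singularity of modality $\ge 1$ other than $\wt E_6,\wt E_7,\wt E_8$ themselves is adjacent to at least one of the three parabolic singularities.

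For the case analysis I would invoke Arnold's tables directly. For the hyperbolic class $T_{p,q,r}$ with $1/p+1/q+1/r<1$, ordering $p\le q\le r$, one of the inequalities $(p,q,r)\ge(3,3,3)$, $(p,q,r)\ge(2,4,4)$, or $(p,q,r)\ge(2,3,6)$ componentwise must hold, and the obvious deformation $x^p\rightsquigarrow x^{p'}+\text{l.o.t.}$ in the defining polynomial \eqref{Tpqr poly} realises an adjacency $T_{p,q,r}\to T_{p',q',r'}$ to the corresponding parabolic $\wt E_6=T_{3,3,3}$, $\wt E_7=T_{2,4,4}$, or $\wt E_8=T_{2,3,6}$. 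For the $14$ exceptional unimodal singularities, Arnold's adjacency diagram (reproduced in \cite{[AGLV93]}) exhibits each of them as adjacent to at least one parabolic singularity, and this can be read off from a direct polynomial deformation as well. For singularities of modality $\ge 2$, one uses the semi-continuity of modality: any deformation of such a singularity necessarily passes through some modality $1$ stratum, and after possibly iterating the argument one lands in the parabolic family. Combined with the transitivity of adjacency and of symplectic embedding of Milnor fibers, this produces the desired symplectic embedding of one of the parabolic Milnor fibers.

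The main obstacle, and the reason Keating devotes significant effort to this, is not the adjacency claim at the level of algebraic geometry, which is part of Arnold's classification, but the upgrade to a symplectic embedding that also carries the distinguished basis of vanishing cycles to a sub-collection of vanishing cycles of the ambient Milnor fiber. Concretely, one must choose the Morsification so that the vanishing paths defining the $\wt E_k$ distinguished basis can be extended to vanishing paths in the ambient Morsification in a way compatible with the symplectic parallel transport. This is the technical heart: one needs to check that a distinguished basis for the parabolic singularity can be realised as part of a distinguished basis for the ambient singularity, which follows from Gabrielov-type manipulations of the Dynkin diagram (braid group action on distinguished bases) and the existence of the nested Milnor ball described above. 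Once this compatibility is established, the Dynkin diagram in Proposition~\ref{parab dynkin} for the parabolic configuration embeds into the intersection graph of a distinguished basis of the ambient singularity, which is exactly the statement for vanishing cycles.
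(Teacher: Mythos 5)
The paper does not prove this proposition but imports it verbatim from Keating \cite{[Kea15]}, and your reconstruction---adjacency of every positive-modality isolated hypersurface singularity to one of $\wt E_6,\wt E_7,\wt E_8$ (Arnold), upgraded to a symplectic embedding of Milnor fibres carrying a distinguished collection of vanishing cycles via a Morsification and a nested Milnor ball---is exactly the argument behind the cited Lemmas 2.12/2.13 and Corollary 2.17. The only loose step is the modality $\geq 2$ case: semicontinuity of modality alone does not force a deformation to pass through a unimodal stratum, and what is actually needed there is Arnold's theorem that the simple singularities are precisely those not adjacent to a parabolic one, which is the fact Keating invokes.
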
 

{\propo} \ref{reduction to parab} will be used in the second part of the proof of {\thm} \ref{no DE config}.

\begin{remark}
    Note that {\propo} \ref{reduction to parab} was used by Keating in the first line of the proof of \cite[{\thm} 5.7]{[Kea15]}, which might be instructive for the reader.
\end{remark}

\section{Proofs}\label{Proofs}

\subsection{Spectral rigidity of {\lagsph}s}
In this section, we study properties of {\specinv}s for {\lagsph}s, which will be relevant in the later sections. Note that in this section, we do not assume semi-simplicity but we assume that $(X,\omega)$ is a real $2n$-dimensional closed monotone {\symp} {\mfd} with even $n$ (which means that the results in this section are not directly relevant for the odd $n$ case). The monotonicity is assumed for technical reasons.

In \cite[{\thm} 3.3]{[BM16]}, Biran--Membrez proved that for an even dimensional monotone {\lag} sphere $L$ in $(X,\omega)$ (which forces the real dimension of $(X,\omega)$ to be $2n$ with even $n$) satisfies the following property: the cohomology class $[L] \in QH(X,\omega)$, which is the Poincar\'e dual of the homology class represented by $L$, satisfies the cubic equation 
\begin{equation}\label{cubic equation}
    [L]^3 = 4 \beta_{L} [L]
\end{equation}
for some $\beta_L \in \Lambda$. When $\beta_L \neq 0$, then the cubic equation \eqref{cubic equation} implies that 
\begin{equation}\label{sphere idempotents}
        e^{L} _{\pm}= \pm \frac{1}{4\sqrt{\beta_{L}}} [L] + \frac{1}{8\beta_{L}} [L]^2
    \end{equation} 
gives two orthogonal {\idem}s of $QH(X,\omega)$. In fact, the {\idem}s $e^{L} _{\pm}$ are not only {\idem}s of $QH(X,\omega)$, but are units of field factors of $QH(X,\omega)$.

\begin{claim}[{\cite[{\propo} 5.8]{[San21]}}]\label{sphere unit field factor}
    The {\idem}s $e^{L} _{\pm}$ are units of field factors of $QH(X,\omega)$, i.e. 
    $$e^{L} _{\pm} \cdot QH(X,\omega) =  \Lambda \cdot e^{L} _{\pm}  .$$
\end{claim}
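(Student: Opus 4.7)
The plan is to combine the eigenvalue structure coming from the Biran--Membrez cubic relation with the closed-open/open-closed module compatibility and the structure of $HF(L)$ for a Lagrangian sphere. As a preliminary computation, using $[L]^3 = 4\beta_L [L]$ together with the explicit formula \eqref{sphere idempotents}, one gets
$$ e^L_\pm \ast [L] = \pm 2\sqrt{\beta_L}\, e^L_\pm , $$
so quantum multiplication by $[L]$ acts as the scalar $\pm 2\sqrt{\beta_L}$ on each ideal $e^L_\pm \cdot QH(X,\omega)$.

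Next I would analyze the Lagrangian quantum cohomology $HF(L)$. For a monotone Lagrangian sphere $L \cong S^n$ with $n$ even, the Biran--Cornea pearl complex computes $HF(L)$ from $H^*(S^n;\Lambda)$, so it has $\Lambda$-rank at most $2$, generated by $1_L$ and the point class $[\mathrm{pt}]_L$; moreover the closed-open map satisfies $\mathcal{CO}^0([L]) = 2\,[\mathrm{pt}]_L$ up to a sign convention. Transporting the cubic relation through the ring homomorphism $\mathcal{CO}^0$ forces $[\mathrm{pt}]_L \ast [\mathrm{pt}]_L = \beta_L \cdot 1_L$ in $HF(L)$, whence
$$ HF(L) \cong \Lambda[y]/(y^2 - \beta_L) \cong \Lambda \oplus \Lambda $$
when $\beta_L \neq 0$, with the two orthogonal field-factor idempotents given precisely by $\mathcal{CO}^0(e^L_\pm)$.

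Now, given $a \in e^L_+ \cdot QH(X,\omega)$, the preceding step provides a unique $c \in \Lambda$ with $\mathcal{CO}^0(a) = c \cdot \mathcal{CO}^0(e^L_+)$. Using $\mathcal{OC}^0(1_L) = [L]$ together with the $QH(X,\omega)$-module structure on $HF(L)$, i.e.\ $\mathcal{OC}^0\bigl(\mathcal{CO}^0(b) \cdot \alpha\bigr) = b \ast \mathcal{OC}^0(\alpha)$, one computes
$$ a \ast [L] \;=\; \mathcal{OC}^0 \bigl( \mathcal{CO}^0(a) \cdot 1_L \bigr) \;=\; c \cdot \bigl( e^L_+ \ast [L] \bigr) \;=\; 2 c \sqrt{\beta_L}\, e^L_+ , $$
while the eigenvalue identity above independently gives $a \ast [L] = 2\sqrt{\beta_L}\, a$. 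Since $\beta_L \neq 0$, we conclude $a = c\, e^L_+$, proving $e^L_+ \cdot QH(X,\omega) = \Lambda \cdot e^L_+$; the argument for $e^L_-$ is identical. The main obstacle is the intermediate step: correctly identifying $HF(L)$ as $\Lambda[y]/(y^2-\beta_L)$ and matching the internal relation $[\mathrm{pt}]_L^2 = \beta_L\, 1_L$ with the cubic equation in $QH(X,\omega)$. This compatibility between the Biran--Membrez relation and the Biran--Cornea pearl-complex product is what \cite[Proposition 5.8]{[San21]} encapsulates; once it is in hand, the open-closed map upgrades the rank-one information on the $HF(L)$-side to the rank-one conclusion on the $QH(X,\omega)$-side.
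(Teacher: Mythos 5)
Your argument is correct and reaches the conclusion, but it takes a genuinely different route from the paper's. The paper first observes that the two Sanda identities $\mathcal{CO}^0\circ\mathcal{OC}^0(1_L)=2p_L$ and $\mathcal{CO}^0\circ\mathcal{OC}^0(p_L)=\beta_L 1_L$ make $\mathcal{CO}^0\circ\mathcal{OC}^0$ a $\Lambda$-linear isomorphism of the rank-two space $HF(L)$, hence $\mathcal{OC}^0$ embeds $HF(L)$ isomorphically onto its image; it then identifies $\Im(\mathcal{OC}^0)=(e^L_++e^L_-)\cdot QH(X,\omega)$ via the module property, counts $\dim_\Lambda(e^L_++e^L_-)\cdot QH(X,\omega)=2$, and invokes semi-simplicity (so that the orthogonal nonzero idempotents $e^L_\pm$ cannot refine further) to conclude that each ideal $e^L_\pm\cdot QH(X,\omega)$ has $\Lambda$-dimension one. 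You instead exploit the eigenvalue identity $e^L_\pm\ast[L]=\pm 2\sqrt{\beta_L}\,e^L_\pm$, which follows purely from the cubic relation, and the fact that $\mathcal{CO}^0(e^L_\pm)$ are the two orthogonal field-factor idempotents of $HF(L)\cong\Lambda[y]/(y^2-\beta_L)$; running $a\in e^L_+\cdot QH(X,\omega)$ through the module identity $a\ast[L]=\mathcal{OC}^0(\mathcal{CO}^0(a))$ and matching against $a\ast[L]=2\sqrt{\beta_L}\,a$ then produces $a=c\,e^L_+$ explicitly. Your approach is more direct (no dimension count), and notably it does not invoke semi-simplicity of $QH(X,\omega)$ at all — only $\beta_L\neq 0$ and the Sanda compatibilities — so it is closer in spirit to Sanda's original proof of the claim in full generality, whereas the paper deliberately opts for a shortcut that is only available under the semi-simplicity assumption that holds throughout the rest of the paper.
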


 Although this is already proven in \cite[{\propo} 5.8]{[San21]} (for the monotone case for technical reasons), we explain Claim \ref{sphere unit field factor} with an additional assumption that $QH(X,\omega)$ is semi-simple, which is the situation we consider in the rest of the paper, as in this case the argument is elementary. 
 
\begin{proof}
    We know that $e^{L} _{\pm}$ are {\idem}s of $ QH(X,\omega)$ which we assume to be semi-simple, so what we want to check is that $e^{L} _{\pm}$ are not sums of finer {\idem}s, i.e. $e^{L} _{\pm}=e_1 +e_2 +\cdots $ where $e_j$ are {\idem}s. In Sanda's proof of Biran--Membrez's {\lag} cubic equation (\cite[Proof of {\propo} 5.7]{[San21]}, which works for the monotone case), it is shown that for any {\lagsph} $L$, we have
\begin{equation}\label{PD for sphere HF}
    \begin{gathered}
        \mathcal{CO}^0 \circ  \mathcal{OC}^0 (1_{L}) = 2p_L,\\
        \mathcal{CO}^0 \circ  \mathcal{OC}^0 (p_L) =  \beta_L 1_L,
    \end{gathered}
\end{equation}
where $p_L $ is the Poincar\'e dual of the point class of $HF(L)$, and $\beta_L$ is defined by the equation $p_L ^2 =\beta_L 1_L $. The equations \eqref{PD for sphere HF} imply that the map $\mathcal{CO}^0 \circ  \mathcal{OC}^0 $ is an isomorphism (between $\Lambda$-vector spaces);
\begin{equation}
    \begin{tikzcd}
 & QH(X,\omega) \arrow{dr}{\mathcal{CO}^0}& & \\
 HF(L) \arrow{ru}{\mathcal{OC}^0} \arrow{rr}{\sim} & & HF(L) .
    \end{tikzcd}
\end{equation}
Thus, 
$$\mathcal{OC}^0 : HF(L) \to \Im (\mathcal{OC}^0) $$
is also an isomorphism (between $\Lambda$-vector spaces). We have $\Im (\mathcal{OC}^0) = (e^{L} _{+}+e^{L} _{-}) \cdot QH(X,\omega)$, which follows from the following two facts:
\begin{enumerate}
    \item  $\mathcal{OC}^0(1_L)= 2\sqrt{\beta_{L}} ( e^{L} _{+}-e^{L} _{-}),\  \mathcal{OC}^0(p_L)= 2 \beta_{L}  ( e^{L} _{+} + e^{L} _{-}) .$

      \item $\mathcal{OC}^0$ is a $QH(X,\omega)$-module map, i.e. $\alpha \cdot \mathcal{OC}^0(a) =  \mathcal{OC}^0 ( \mathcal{CO}^0 (\alpha) \cdot a)$ for any $\alpha \in QH(X,\omega)$, $a \in HF(L)$ (combined with $e^{L} _{\pm}\in \Im (\mathcal{OC}^0) $ from the first item).
\end{enumerate}

    \begin{remark}
        Note that in the first item, we used that $[L] = 2\sqrt{\beta_{L}} ( e^{L} _{+}-e^{L} _{-}) $ and $$\mathcal{OC}^0(p_L)= \frac{1}{2} \mathcal{OC}^0(\mathcal{CO}^0 \circ  \mathcal{OC}^0 (1_{L}) ) =\frac{1}{2} \mathcal{OC}^0( 1_L ) \cdot \mathcal{OC}^0 (1_{L})  =\frac{1}{2}[L]^2 = 2 \beta_{L}  ( e^{L} _{+} + e^{L} _{-}) .$$
    \end{remark}

We know that $HF(L) \simeq \Lambda \oplus \Lambda $ as a $\Lambda$-vector space, i.e. $\dim_{\Lambda} HF(L)=2$, and thus, by the isomorphism $\mathcal{OC}^0 : HF(L) \xrightarrow[]{\sim} \Im (\mathcal{OC}^0) $ as $\Lambda$-vector spaces, we have $\dim_{\Lambda} (e^{L} _{+}+e^{L} _{-}) \cdot QH(X,\omega) =2$. This implies that $e^{L} _{\pm}$ cannot further split to finer {\idem}s, i.e. they satisfy $e^{L} _{\pm} \cdot QH(X,\omega) =  \Lambda \cdot e^{L} _{\pm}  .$
\end{proof}

\begin{remark}
    It is useful to keep in mind that, when $\beta_L \neq 0$, the class $[L]$ can be expressed by the two {\idem}s in \eqref{sphere idempotents} as follows:
    \begin{equation}\label{L with idempotents}
        [L] = 2\sqrt{\beta_{L}} ( e^{L} _{+}-e^{L} _{-}) .
    \end{equation}
\end{remark}

\begin{lemma}\label{sphere spec inv relation}
    Let $L$ an even-dimensional monotone {\lag} sphere in a closed monotone {\symp} {\mfd} $(X,\omega)$. Assume $\beta_{L} \neq 0$. The {\specinv}s for $1_{L} \in HF(L)$ and $e^{L} _{\pm} \in QH(X,\omega)$ are related as follows:
    \begin{equation}\label{max of two idem}
        \ovl{\ell}_{L} (H) = \max \{ \zeta_{e^{L} _{+}} (H) , \ \zeta_{e^{L} _{-}} (H) \}
    \end{equation}
    for any {\hamil} $H$. In particular, $L$ is $e^{L} _{\pm}$-{\suphv}, i.e. {\suphv} {\wrt} both $e^{L} _{\pm}$.
\end{lemma}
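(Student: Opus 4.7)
The plan is to transfer the field-factor decomposition of $QH(X,\omega)$ across the open--closed and closed--open maps, so that the asymptotic {\lag} {\specinv} $\ovl{\ell}_{L}$ gets identified with the maximum of the two {\EP} {\qmor}s $\zeta_{e^{L}_{\pm}}$. The starting point is to compute $\mathcal{CO}^{0}(e^{L}_{\pm})$ explicitly. Since $\mathcal{CO}^{0}$ is a ring homomorphism and $\mathcal{CO}^{0}([L]) = \mathcal{CO}^{0}\circ\mathcal{OC}^{0}(1_{L}) = 2p_{L}$ by the formulas \eqref{PD for sphere HF} already used in the proof of Claim \ref{sphere unit field factor}, plugging into \eqref{sphere idempotents} gives
\[
\mathcal{CO}^{0}(e^{L}_{\pm}) \;=\; \tfrac{1}{2}\Bigl(1_{L} \pm \tfrac{p_{L}}{\sqrt{\beta_{L}}}\Bigr),
\]
which is a pair of orthogonal {\idem}s in $HF(L)$ summing to $1_{L}$.

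Next, I would show that the asymptotic {\lag} {\specinv}s of $\mathcal{CO}^{0}(e^{L}_{\pm})$ agree with $\zeta_{e^{L}_{\pm}}$. The open--closed module identity used in the proof of Claim \ref{sphere unit field factor} yields
\[
\mathcal{OC}^{0}\bigl(\mathcal{CO}^{0}(e^{L}_{\pm})\bigr) \;=\; e^{L}_{\pm}\cdot\mathcal{OC}^{0}(1_{L}) \;=\; e^{L}_{\pm}\cdot[L] \;=\; \pm\,2\sqrt{\beta_{L}}\,e^{L}_{\pm}.
\]
Applying Proposition \ref{co oc spec inv}(1) to $e^{L}_{\pm}$ and Proposition \ref{co oc spec inv}(2) to $\mathcal{CO}^{0}(e^{L}_{\pm})$ sandwiches $\ell(H,\mathcal{CO}^{0}(e^{L}_{\pm}))$ between $c(H,e^{L}_{\pm})$ and $c(H,e^{L}_{\pm})+\nu(2\sqrt{\beta_{L}})$; after iterating $H\mapsto H^{\#k}$ and dividing by $k$, the additive constant drops, giving
\[
\lim_{k\to\infty}\tfrac{1}{k}\,\ell(H^{\#k},\mathcal{CO}^{0}(e^{L}_{\pm})) \;=\; \zeta_{e^{L}_{\pm}}(H).
\]

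For the upper bound in \eqref{max of two idem}, I would apply filtration subadditivity to the decomposition $1_{L}=\mathcal{CO}^{0}(e^{L}_{+})+\mathcal{CO}^{0}(e^{L}_{-})$:
\[
\ell(H^{\#k},1_{L}) \;\leq\; \max\bigl\{\ell(H^{\#k},\mathcal{CO}^{0}(e^{L}_{+})),\,\ell(H^{\#k},\mathcal{CO}^{0}(e^{L}_{-}))\bigr\},
\]
then divide by $k$ and pass to the limit using the previous step. For the reverse inequality I would invoke the standard module triangle inequality for {\lag} {\specinv}s (part of the package cited for Proposition \ref{co oc spec inv}), namely
\[
\ell(H,\mathcal{CO}^{0}(\alpha)\cdot a) \;\leq\; \nu(\alpha)+\ell(H,a),
\]
with $\alpha=e^{L}_{\pm}$ and $a=1_{L}$; dividing the iterated version by $k$ kills $\nu(e^{L}_{\pm})$ and yields $\zeta_{e^{L}_{\pm}}(H)\leq\ovl{\ell}_{L}(H)$, which combined with the upper bound gives \eqref{max of two idem}.

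The $e^{L}_{\pm}$-{\suphvness} of $L$ follows immediately: iterating the {\lag} control \eqref{Lag control prop} for a time-independent {\hamil} $H$ gives $\ovl{\ell}_{L}(H)\leq \sup_{L}H$, and combining with $\zeta_{e^{L}_{\pm}}(H)\leq\ovl{\ell}_{L}(H)$ already established verifies Definition \ref{def of heavy}(2). The main step requiring care beyond Proposition \ref{co oc spec inv} and the ring-theoretic content of the Biran--Membrez cubic is the module triangle inequality used for the lower bound; it is standard in the {\lag} {\specinv} literature (e.g.\ \cite{[LZ18],[FOOO19],[PS]}) but was not restated in Section \ref{prelim specinv}, so one must cite it explicitly.
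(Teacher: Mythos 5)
Your proof is correct, and the strategy is the same at its core as the paper's — the ingredients are the identities \eqref{PD for sphere HF}, the idempotent formula \eqref{sphere idempotents}, the open--closed/closed--open inequalities of Proposition~\ref{co oc spec inv}, and the non-archimedean ``max'' inequality, all followed by homogenization. The difference is organizational. You split $1_L=\mathcal{CO}^0(e^L_+)+\mathcal{CO}^0(e^L_-)$ and track the asymptotic \lag{} \specinv{} of each piece separately, identifying $\lim_k\tfrac1k\ell(H^{\#k},\mathcal{CO}^0(e^L_\pm))$ with $\zeta_{e^L_\pm}(H)$ via a two-sided squeeze and then combining by subadditivity (for $\leq$) and the module triangle inequality (for $\geq$). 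The paper instead sandwiches $\ell(H,1_L)$ directly between the closed-string quantities $c(H,e^L_++e^L_-)$ (using $\mathcal{CO}^0(e^L_++e^L_-)=1_L$) and $c(H,e^L_+-e^L_-)+\nu(2\sqrt{\beta_L})$ (using $[L]=2\sqrt{\beta_L}(e^L_+-e^L_-)$ and $\mathcal{OC}^0(1_L)=[L]$), then shows in Claim~\ref{spec inv sum equal} that both extremes have the same asymptotic value $\max\{\zeta_{e^L_+},\zeta_{e^L_-}\}$, using only the Hamiltonian triangle inequality \eqref{triangle ineq} and the characteristic exponent property. What your route buys is a conceptually clean identity ``asymptotic level of $\mathcal{CO}^0(e^L_\pm)$ equals $\zeta_{e^L_\pm}$'' as an intermediate statement; what it costs is the explicit use of the Lagrangian module triangle inequality, which as you correctly flag is standard in \cite{[LZ18],[FOOO19],[PS]} but is not stated in the paper's preliminaries — the paper's argument is arranged precisely so as to avoid needing it. (Minor aside: the inequality signs in the paper's displayed equation \eqref{sph spec inv ineq 3} appear to be reversed relative to what \eqref{sph spec inv ineq 1}--\eqref{sph spec inv ineq 2} give; this is harmless because Claim~\ref{spec inv sum equal} collapses both sides to an equality, but it is worth noting that your version of the sandwich does not carry the same typo.)
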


\begin{proof}[Proof of Lemma \ref{sphere spec inv relation}]
From \eqref{L spec inv relation}, we have
$$ \ell( H , 1_L ) \geq  c (H, [L]) $$
for any {\hamil} $H$. By using \eqref{L with idempotents}, we further get
\begin{equation}\label{sph spec inv ineq 1}
    \begin{aligned}
        \ell( H , 1_L ) & \geq  c (H, [L]) \\
        & = c(H , 2\sqrt{\beta_{L}} ( e^{L} _{+}-e^{L} _{-}) ) \\
        & = c(H ,   e^{L} _{+}-e^{L} _{-} ) + \nu (2\sqrt{\beta_{L}}) .
    \end{aligned}
\end{equation}
Note that the last equality uses \eqref{novikov shift}. By using \eqref{PD for sphere HF}, we can see that the closed-open map 
$$\mathcal{CO}^0 : QH(X,\omega) \longrightarrow HF(L) $$
satisfies 
 \begin{equation}\label{closed open map sphere idem}
        \begin{aligned}
          \mathcal{CO}^0 (e^{L} _{\pm}) &= \pm \frac{1}{4\sqrt{\beta_L}}  \mathcal{CO}^0 \circ  \mathcal{OC}^0 (1_{L}) + \frac{1}{8\beta_L} (\mathcal{CO}^0 \circ  \mathcal{OC}^0 (1_{L}))^2 \\
          & = \pm \frac{1}{4\sqrt{\beta_L}}  2p_{L} + \frac{1}{8\beta_L} (2p_{L})^2 \\
          & = \pm \frac{1}{2\sqrt{\beta_L}}  p_{L} + \frac{1}{8\beta_L} 4 \beta_{L}\cdot  1_{L} \\
          & = \pm \frac{1}{2\sqrt{\beta_L}}  p_{L} + \frac{1}{2}   1_{L} .
        \end{aligned}
    \end{equation}

Thus, we have
$$\mathcal{CO}^0 (e^{L} _{+} +e^{L} _{-}) = 1_L .$$
This implies 
\begin{equation}\label{sph spec inv ineq 2}
    c(H , e^{L} _{+} +e^{L} _{-}) \geq \ell( H , 1_L) .
\end{equation}
Inequalities \eqref{sph spec inv ineq 1} and \eqref{sph spec inv ineq 2} imply 
$$ c(H , e^{L} _{+} +e^{L} _{-}) \geq \ell( H , 1_L) \geq c(H ,   e^{L} _{+}-e^{L} _{-} ) + \nu (2\sqrt{\beta_{L}}) .$$
By homogenizing this, we get  
\begin{equation}\label{sph spec inv ineq 3}
    \zeta_{e^{L} _{+} +e^{L} _{-}} (H ) \leq \ovl{\ell}_L ( H ) \leq \zeta_{e^{L} _{+}-e^{L} _{-} } (H    ) .
\end{equation} 
We claim the following.
\begin{claim}\label{spec inv sum equal}
    We have
    $$\zeta_{e^{L} _{+} +e^{L} _{-}} (H) = \zeta_{e^{L} _{+} -e^{L} _{-}} (H) = \max \{ \zeta_{e^{L} _{+}} (H) , \ \zeta_{e^{L} _{-}} (H) \} $$
    for every {\hamil} $H$.
\end{claim}
  Claim \ref{spec inv sum equal} will be proved shortly after the proof. The inequality \eqref{sph spec inv ineq 3} and Claim \ref{spec inv sum equal} imply 
   $$\ovl{\ell}_L ( H ) = \max \{ \zeta_{e^{L} _{+}} (H) , \ \zeta_{e^{L} _{-}} (H) \} $$
   for any $H$, which completes the proof of \eqref{max of two idem}. Now, by combining \eqref{Lag control prop} and \eqref{max of two idem}, we obtain
   \begin{equation*}
     \max \{ \zeta_{e^{L} _{+}} (H) , \ \zeta_{e^{L} _{-}} (H) \} = \ovl{\ell}_L ( H ) \leq \ell_L ( H )   \leq \int_{0} ^1 \max_{x\in L} H_t(x) dt
   \end{equation*}
   for every {\hamil} $H$, which implies that $L$ is {\suphv} {\wrt} both $e^{L} _{+}$ and $e^{L} _{-}$. This completes the proof of Lemma \ref{sphere spec inv relation}.
\end{proof}

\begin{proof}[Proof of Claim \ref{spec inv sum equal}]
We first prove $  \zeta_{e^{L} _{+} +e^{L} _{-}} (H)  \geq \max \{ \zeta_{e^{L} _{+}} (H) , \ \zeta_{e^{L} _{-}} (H) \} $ for any $H$. By the triangle inequality \eqref{triangle ineq}, we get
\begin{equation*}
    \begin{aligned}
        c(H , e^{L} _{+} +e^{L} _{-} ) + \nu (e^{L} _{\pm}) & =  c(H , e^{L} _{+} +e^{L} _{-} ) + c (0, e^{L} _{\pm}) \\
        & \geq c (H , e^{L} _{\pm}) 
    \end{aligned}
\end{equation*}
for any {\hamil} $H$, and thus
$$ \zeta_{e^{L} _{+} +e^{L} _{-} } (H) \geq   \zeta_{e^{L} _{\pm}} (H) $$ 
for any {\hamil} $H$. Therefore, 
\begin{equation}\label{claim ineq 1}
     \zeta_{e^{L} _{+} +e^{L} _{-} } (H )  \geq   \max \{ \zeta_{e^{L} _{+}} (H) , \ \zeta_{e^{L} _{-}} (H) \}
\end{equation}
for any {\hamil} $H$. 

Next, we prove $  \zeta_{e^{L} _{+} +e^{L} _{-}} (H)  \leq \max \{ \zeta_{e^{L} _{+}} (H) , \ \zeta_{e^{L} _{-}} (H) \}$ for any $H$. The characteristic exponent property of spectral invariants (c.f. \cite[Section 2.6.4]{[EP03]}) implies 
$$ c (H , e^{L} _{+} +e^{L} _{-} )  \leq \max \{ c (H ,e^{L} _{+}) ,\ c (H ,e^{L} _{-}) \},$$
and thus by homogenizing, we obtain
\begin{equation}\label{claim ineq 2}
    \zeta_{e^{L} _{+} +e^{L} _{-}} (H )  \leq \max \{ \zeta_{e^{L} _{+}} (H) , \ \zeta_{e^{L} _{-}} (H) \}
\end{equation}
for any {\hamil} $H$. From \eqref{claim ineq 1} and \eqref{claim ineq 2}, we have
\begin{equation}
    \zeta_{e^{L} _{+} +e^{L} _{-}} (H ) = \max \{ \zeta_{e^{L} _{+}} (H) , \ \zeta_{e^{L} _{-}} (H) \}
\end{equation}
for any {\hamil} $H$. By an analogous argument, one can also prove 
\begin{equation}
    \zeta_{e^{L} _{+}  - e^{L} _{-}} (H ) = \max \{ \zeta_{e^{L} _{+}} (H) , \ \zeta_{e^{L} _{-}} (H) \}
\end{equation}
for any {\hamil} $H$. This completes the proof of Claim \ref{spec inv sum equal}.
\end{proof}

Now, we consider the situation where there are several {\lagsph}s, especially when they are (homologically) intersecting. Biran--Membrez (as well as Sanda) proved that if two {\lagsph}s $L$ and $L'$ are (co)homologically intersecting, i.e. 
$$[L] \cdot [L'] \neq 0,$$
then we have 

\begin{equation}\label{intersection property}
\beta_{L} =\beta_{L'} .
\end{equation}

This implies that if there is an ADE {\config} 
$$\mathscr{S}_{}:=\{S_1,\cdots,S_m\},$$
then all the $\beta_{S_j}$ coincide. In such a case, we simply denote
\begin{equation}\label{beta all coincide}
\beta=\beta_{\mathscr{S}_{}} :=\beta_{S_j}  .
\end{equation}

\subsection{Proof of {\thm} \ref{no DE config SG}}
In this section, we prove {\thm} \ref{no DE config SG}. We separate the proof depending on the parity of $n$.

\begin{proof}[Proof of {\thm} \ref{no DE config SG}: case of even $n$]
We start with an important lemma, which will also be used in the proof of {\thm} \ref{Am config many qmor}.

  \begin{lemma}\label{lemma spheres idemp}
    Assume $\beta_{L},\beta_{L'}\neq 0 $. If $[L] \cdot [L'] \neq 0$, then 
    $$\{e^{L} _{+},\ e^{L} _{-} \} \cap \{e^{L'} _{+},\ e^{L'} _{-} \} \neq \emptyset .$$
    \end{lemma}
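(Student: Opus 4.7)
The plan is a proof by contradiction, leveraging semi-simplicity together with the explicit expression of $[L]$ and $[L']$ in terms of the idempotents $e^L_\pm, e^{L'}_\pm$.

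First, I would use the intersection formula \eqref{intersection property}: since $[L] \cdot [L'] \neq 0$, we have $\beta_L = \beta_{L'} =: \beta$, and by assumption $\beta \neq 0$, so both sets of idempotents $\{e^L_\pm\}$ and $\{e^{L'}_\pm\}$ are well-defined via \eqref{sphere idempotents} and satisfy the expressions \eqref{L with idempotents}
\[
[L] = 2\sqrt{\beta}\,(e^L_+ - e^L_-), \qquad [L'] = 2\sqrt{\beta}\,(e^{L'}_+ - e^{L'}_-).
\]

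Next, I would invoke Claim \ref{sphere unit field factor}: each of the four idempotents $e^L_\pm, e^{L'}_\pm$ is a unit of a field factor of the semi-simple algebra $QH(X,\omega)$. Since a field has a unique multiplicative identity, two such idempotents are equal if and only if they are the units of the same field factor in the decomposition $QH(X,\omega) = \bigoplus_j Q_j$; otherwise they sit in distinct factors and are therefore orthogonal (this is the only nontrivial algebraic input, and it is immediate from the direct sum structure).

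Now suppose for contradiction that $\{e^L_+, e^L_-\} \cap \{e^{L'}_+, e^{L'}_-\} = \emptyset$. Combined with the known orthogonality $e^L_+\cdot e^L_- = 0 = e^{L'}_+\cdot e^{L'}_-$ (each pair already lies in distinct field factors), all four idempotents are pairwise distinct units of field factors, hence pairwise orthogonal. Consequently $e^L_\epsilon \cdot e^{L'}_{\epsilon'} = 0$ for every choice of signs $\epsilon,\epsilon' \in \{+,-\}$. Expanding
\[
[L] \cdot [L'] = 4\beta\,(e^L_+ - e^L_-)(e^{L'}_+ - e^{L'}_-) = 4\beta \sum_{\epsilon,\epsilon'} (\epsilon\epsilon')\, e^L_\epsilon \cdot e^{L'}_{\epsilon'} = 0,
\]
which contradicts the hypothesis $[L]\cdot[L']\neq 0$. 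The contradiction forces the overlap $\{e^L_+, e^L_-\}\cap \{e^{L'}_+, e^{L'}_-\}\neq \emptyset$.

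The only potential subtlety is the passage from "distinct idempotents that are units of field factors" to "orthogonal", but this is automatic from Claim \ref{sphere unit field factor} once the semi-simple decomposition is in place; the rest is a short computation with \eqref{L with idempotents}. I do not anticipate a serious obstacle.
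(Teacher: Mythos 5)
Your proof is correct and is essentially the paper's argument stated in contrapositive form: the paper shows directly that $e^L_+ \ast e^{L'}_+ \neq 0$ forces $e^L_+ = e^{L'}_+$ (using $e^L_+ \cdot QH(X,\omega) = \Lambda \cdot e^L_+$ from Claim~\ref{sphere unit field factor} together with the fact that a product of commuting idempotents is an idempotent), while you invoke the equivalent observation that distinct units of field factors in the semi-simple decomposition are orthogonal, so that disjointness of $\{e^L_\pm\}$ and $\{e^{L'}_\pm\}$ would force $[L]\ast[L']=0$. Note that both your argument and the paper's silently pass from the homological hypothesis $[L]\cdot[L']\neq 0$ to non-vanishing of the quantum product $[L]\ast[L']$, which is justified because the top-degree component of the quantum product agrees with the cup product.
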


    \begin{remark}\label{rmk shared idem}
        In Lemma \ref{lemma spheres idemp}, the {\lagsph}s $L,L'$ do not necessarily have to form an $A_2$ {\config}. However, if they do form an $A_2$ {\config}, then we can say furthermore that only one of the two {\idem}s is shared by $L$ and $L'$, i.e. 
        $$\{e^{L} _{+},\ e^{L} _{-} \} \neq \{e^{L'} _{+},\ e^{L'} _{-} \} .$$
        For this, see the proof of {\propo} \ref{intersection and idempotent}. Also note that if $[L] \cdot [L'] = 0$, then $$\{e^{L} _{+},\ e^{L} _{-} \} \cap \{e^{L'} _{+},\ e^{L'} _{-} \}= \emptyset , $$
    as they are all orthogonal.
    \end{remark}
    
    We postpone the proof of Lemma \ref{lemma spheres idemp} until the end of the proof of {\thm} \ref{no DE config SG}. We argue by contradiction; assume that $(X,\omega)$ contains a $D_4$ {\config} of {\lagsph}s. As in Figure \ref{sphere with three hands}, there is a {\lagsph} $S$ that intersects three other {\lagsph}s $S_1,S_2, S_3$,
    $$|S \cap S_j| =1 ,\ 1 \leq j \leq 3 .$$

   \begin{figure}[h]
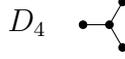

        \tble{D/{4}}
 \caption{$D_4$-{\config}: The sphere $S$ corresponds to the sphere in the middle.}
       \label{sphere with three hands}
   \end{figure}

     Since $n$ is even and $L$ is a {\lagsph}, we have $[L] \cdot [L]=-2$, and thus $[L]\neq 0$. Therefore, by the semi-simplicity of $QH(X,\omega)$, $[L]$ is not a nilpotent, and since $[L]^3 = 4\beta_L [L]$ by the cubic equation \eqref{cubic equation}, we deduce that $\beta_L \neq 0$. By the intersection property \eqref{intersection property}, all the {\lagsph}s involved in the $D_m,E_m$ {\config} have the same $\beta_L$, which is non-zero, i.e. 
    $$\beta:=  \beta_{S}= \beta_{S_j} \neq 0 , \ \forall j. $$
    Thus, by Claim \ref{sphere unit field factor}, each {\lagsph} produces two {\idem}s that are units of field factors of $QH(X,\omega)$ as \eqref{sphere idempotents}:
     
    \begin{equation}
    \begin{gathered}
    e^{S} _{\pm}= \pm \frac{1}{4\sqrt{\beta}} [S] + \frac{1}{8\beta} [S]^2,\\
    e^{S_j} _{\pm}= \pm \frac{1}{4\sqrt{\beta}} [S_j] + \frac{1}{8\beta} [S_j]^2 .
    \end{gathered}
    \end{equation}

From Lemma \ref{lemma spheres idemp}, $S$ and $S_1$ share at least one {\idem}, i.e. 

 \begin{equation}
   \{e^{S} _{+},\ e^{S} _{-}\} \cap \{e^{S_1} _{+},\ e^{S_1} _{-}\} \neq \emptyset .
    \end{equation}
    
    We assume 
     \begin{equation}\label{idem S and S1}
   e^{S} _{-} =  e^{S_1} _{-}
    \end{equation}
    
    {\wlg}. Applying Lemma \ref{lemma spheres idemp} to $S$ and $S_2$, we get 
    
      \begin{equation}
    \begin{gathered}
    e^{S} _{-} \notin  \{e^{S_2} _{+},\ e^{S_2} _{-} \} , \\
    e^{S} _{+} \in  \{e^{S_2} _{+},\ e^{S_2} _{-} \} ,
    \end{gathered}
    \end{equation}
    as 
    \begin{itemize}
        \item $S$ and $S_2$ share at least one {\idem},

        \item $S_1$ and $S_2$ are disjoint, so they cannot share any {\idem} (if they did, then this will contradict the {\suphvness} property {\propo} \ref{suphv constant}),

        \item and we have assumed \eqref{idem S and S1}.
    \end{itemize}

    Once again, {\wlg} we can assume 
    \begin{equation}\label{idem S and S2}
   e^{S} _{+} =  e^{S_2} _{+} .
    \end{equation}
    
   Finally, we apply Lemma \ref{lemma spheres idemp} to $S$ and $S_3$:
   \begin{equation}\label{idem S and S3}
   \{e^{S} _{+},\ e^{S} _{-} \} \cap \{e^{S_3} _{+},\ e^{S_3} _{-} \} \neq \emptyset .
    \end{equation}
    
     Property \eqref{idem S and S3}, combined with \eqref{idem S and S1} and \eqref{idem S and S2}, implies
    \begin{equation}\label{2 neq 3}
        \{e^{S_1} _{-},\ e^{S_2} _{+} \} \cap \{e^{S_3} _{+},\ e^{S_3} _{-} \} \neq \emptyset .
    \end{equation}
 
    In view of {\propo} \ref{suphv constant}, property \eqref{2 neq 3} contradicts the {\suphvness} properties (by Lemma \ref{sphere spec inv relation}, $S_1$ is $e^{S_1} _{-}$-{\suphv}, $S_2$ is $e^{S_2} _{+}$-{\suphv}, and $S_3$ is $e^{S_3} _{\pm}$-{\suphv}), as $S_1$ and $S_2$ are both disjoint from $S_3$, i.e. 
    $$\left( S_1 \cup S_2 \right) \cap S_3 = \emptyset .$$ 
    This completes the proof of {\thm} \ref{no DE config SG} for even $n$.
\end{proof}

We prove Lemma \ref{lemma spheres idemp}.

    \begin{proof}[Proof of Lemma \ref{lemma spheres idemp}]
     First of all, the assumption $[L] \cdot [L'] \neq 0$ implies $\beta_L = \beta_{L'} \neq 0$. We call this $\beta$, i.e.
   $$\beta:= \beta_L = \beta_{L'} \neq 0 .$$

    We have 
    \begin{equation}
        \begin{gathered}
           \frac{1}{2\beta ^{1/2}}[L]= e^{L} _{+}-e^{L} _{-},\\
            \frac{1}{2\beta ^{1/2}}[L']= e^{L'} _{+}-e^{L'} _{-},
        \end{gathered}
    \end{equation}
    and $[L] \ast [L'] \neq 0$ implies
    
    \begin{equation}
        (e^{L} _{+}-e^{L} _{-}) \ast (e^{L'} _{+}-e^{L'} _{-}) \neq 0 .
    \end{equation}
    By developing the left hand side, we get 
    \begin{equation}\label{sum of four terms}
        e^{L} _{+} \ast e^{L'} _{+} -e^{L} _{-} \ast e^{L'} _{+} -e^{L} _{-} \ast e^{L'} _{-} +e^{L} _{-} \ast e^{L'} _{-} \neq 0 .
    \end{equation}
    This implies that at least one of the four terms is non-zero. We can assume 
    $ e^{L} _{+} \ast e^{L'} _{+}\neq 0$ {\wlg}. We prove $e^{L} _{+}= e^{L'} _{+}=e^{L} _{+} \ast e^{L'} _{+} $. As $e^{L} _{+}$ is a unit of a field factor, i.e. $e^{L} _{+} \cdot QH(X,\omega) = \Lambda \cdot e^{L} _{+}$ (Claim \ref{sphere unit field factor}), we have 
    $$e^{L} _{+} \ast e^{L'} _{+} = \alpha \cdot e^{L} _{+}$$
    for some $\alpha \in \Lambda \backslash \{0\}$. The left hand side $e^{L} _{+} \ast e^{L'} _{+}$ is an idempotent, so we have
    $$(\alpha \cdot e^{L} _{+})^2 =  (e^{L} _{+} \ast e^{L'} _{+})^2 = e^{L} _{+} \ast e^{L'} _{+}= \alpha \cdot e^{L} _{+},$$
    which is
    $$\alpha^2 = \alpha,$$
    and thus, $\alpha = 1$. Therefore, 
    $$e^{L} _{+} \ast e^{L'} _{+} = e^{L} _{+}. $$
    By applying the same argument to $e^{L'} _{+}$, we get 
     $$e^{L} _{+} \ast e^{L'} _{+} = e^{L'} _{+}, $$
     which is 
     $$ e^{L} _{+}= e^{L'} _{+}(=e^{L} _{+} \ast e^{L'} _{+} ) .$$
     Thus,
     $$\{e^{L} _{+},\ e^{L} _{-}\} \cap \{e^{L'} _{+},\ e^{L'} _{-}\} \neq \emptyset .$$
    This finishes the proof of the lemma.
    \end{proof}

\begin{proof}[Proof of {\thm} \ref{no DE config SG}: case of odd $n$]
Assume $n$ is odd and $(X,\omega)$ satisfies $\frac{n +1}{2 N_X} \notin \Z$. Then, any {\lagsph} $L$ in $(X,\omega)$ satisfies a $\dim_{\Lambda } HF(L) =2  $ as a vector space (\cite{[BM16]}) and $pt_L ^2 =0$, both for degree reasons. Thus, $1_L$ is the only {\idem} of $HF(L)$. By using the semi-simplicity, we decompose the unit $1_X$ into a sum of units of field factors:
\begin{equation}
    1_X =\sum_{1 \leq j \leq l} e_j .
\end{equation}
As $\mathcal{CO}^0(1_X)=1_L$, $\mathcal{CO}^0 \colon QH(X,\omega) \to HF(L)$ is a ring homomorphism, and $1_L$ is the only {\idem} of $HF(L)$, there exists a unique unit $e_{j_0}$ of a field factor such that
$$\mathcal{CO}^0(e_j)=\delta_{j,j_0} \cdot 1_L , $$
    where $\delta_{j,j_0}=1$ if $j=j_0$ and $\delta_{j,j_0}=0$ otherwise. We denote this distinguished {\idem} corresponding to $L$ by $e^L:=e_{j_0}$.

Now, we consider an $A_2$-{\config} $L,L'$. As they intersect at one point, we have $HF(L,L') \neq 0 (=  \Lambda \cdot \langle L \cap L' \rangle$). Together with $\mathcal{CO}^0(e^L)=1_L$ and $\mathcal{CO}^0(e^{L'})=1_{L'}$, we obtain $e^{L} \ast e^{L'} \neq 0$ (see, for example, \cite[Lemma 4.7]{[San21]}). As $e^{L} , e^{L'} $ are units of fields factors of $QH(X,\omega)$, we conclude that $e^{L} = e^{L'}$ (by the same argument that starts right after the equation \eqref{sum of four terms}). This implies that $L$ and $L'$ are both $e^{L} = e^{L'}$-{\suphv}.

Now, we suppose there is an $A_3$-{\config} $L_1,L_2,L_3$ and deduce a contradiction. The pairs $L_1,L_2$ and $L_2,L_3$ both form $A_2$-{\config}s. Thus, from the previous argument, we have
$$e^{L_1} = e^{L_2}=e^{L_3} .$$
Thus, the three {\lagsph}s $L_1,L_2,L_3$ are all {\suphv} {\wrt} the same {\idem} $e^{L_1} = e^{L_2}=e^{L_3}$. This contradicts that $L_1 \cap L_3 = \emptyset$ ({\propo} \ref{suphv constant}). Thus, there cannot be any $A_3$-{\config} when $n$ is odd and $(X,\omega)$ satisfies $\frac{n +1}{2 N_X} \notin \Z$.
\end{proof}

\begin{remark}
    We can also prove the even $n$ case of {\thm} \ref{no DE config SG} by an argument closer to the above argument for the odd $n$ case.
\end{remark}

\subsection{Proof of {\thm} \ref{no DE config}: From SG to AG}\label{From SG to AG}

In this section, we prove {\thm} \ref{no DE config} by reducing it to its {\symp} counterpart, namely {\thm} \ref{no DE config SG}. Before we start the proof, we mention the following expected statement that extends {\thm} \ref{no DE config}, which will hold as soon as \cite{[AFOOO]} is established.

\begin{conj}[{Algebraic geometry version: expected}]\label{ideal statement}
    Let $X$ be a complex $n$ dimensional smooth Fano variety. Assume either one of the following two:
    \begin{itemize}
        \item $QH(X,\omega)$ is semi-simple, where $\omega$ is the anti-canonical form.

        \item $n>2$ and $QH(X,\omega)$ is semi-simple for a generic choice of a {\symp} form $\omega$.
    \end{itemize}
    If $X$ degenerates to a Fano variety with an isolated {\hypsurf} {\sing}ity, then the {\sing}ity has to be
    \begin{itemize}
        \item  an $A_m$-{\sing}ity with $m\geq 1$, if $n$ is even.

        \item  an $A_m$-{\sing}ity with $m= 1,2$, if $n$ is odd and $\frac{\dim_\C X +1}{2r_X} \notin \Z$ where $r_X$ is the Fano index.
    \end{itemize}
    \end{conj}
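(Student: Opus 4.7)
The plan is to reduce Conjecture \ref{ideal statement} to its symplectic counterpart Theorem \ref{no DE config SG}, exploiting the fact that an isolated hypersurface singularity in the central fiber of a degeneration produces a Lagrangian sphere configuration (its vanishing cycles) inside any smooth nearby fiber, equipped with the monotone anti-canonical form.

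First I would make precise the symplectic setup of the degeneration $\pi \colon \mathcal{X} \to \C$. Since an isolated hypersurface singularity is Gorenstein (Remark \ref{gorenstein line bdl}), the inverse dualizing sheaf $-\omega_{\mathcal{X}}$ is a genuine line bundle; as $X$ is Fano, some power of $-\omega_{\mathcal{X}}$ is $\pi$-relatively ample and induces an embedding $f \colon \mathcal{X} \hookrightarrow \C \times \C P^N$ as in Section \ref{Degeneration}. The cohomology computation \eqref{Fano is monotone} shows that the restriction of $f^{*}\omega_{\FS}$ to each smooth fiber is proportional to $c_1(TX)$, hence is the monotone anti-canonical form; symplectic parallel transport along $\pi$ then identifies every smooth fiber with $(X,\omega)$. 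Running the vanishing cycle construction along a path from $t=1$ to the critical value $0$ places, inside $(X,\omega)$, the Lagrangian sphere configuration associated to the singularity type of $X_0$.

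Second, I would feed the resulting configuration into Theorem \ref{no DE config SG}. For a simple $A_m$-singularity one recovers an $A_m$-chain of Lagrangian spheres, which is permitted. For $D_m$ ($m\ge 4$) or $E_m$ ($m=6,7,8$), the vanishing cycles contain a $D_4$-subconfiguration (the three terminal spheres meeting the central trivalent vertex), contradicting Theorem \ref{no DE config SG} when $n$ is even. In the odd-$n$ case with $\tfrac{n+1}{2 r_X}\notin \Z$ --- where the Fano index $r_X$ matches the minimal Chern number $N_X$ of the monotone anti-canonical form --- an $A_m$-singularity with $m\ge 3$ contains an $A_3$-subconfiguration and is again excluded. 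For a singularity of positive modality, Proposition \ref{reduction to parab} embeds the vanishing cycles of one of the parabolic singularities $\widetilde{E}_6,\widetilde{E}_7,\widetilde{E}_8$ inside the given configuration, and Proposition \ref{parab dynkin} confirms that these already contain a $D_4$-subconfiguration; so Theorem \ref{no DE config SG} rules them out as well. These steps dispose of the first clause of the conjecture, which is precisely Theorem \ref{no DE config}.

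The second clause --- assuming only generic semi-simplicity --- is the delicate one. The proof of Theorem \ref{no DE config SG} rests on the monotone Lagrangian cubic equation \eqref{cubic equation}, the closed--open/open--closed computations \eqref{PD for sphere HF}, and the superheaviness criterion Theorem \ref{CO map heavy}, all currently set up only in the monotone category. By Usher's \cite[{\propo} 7.11]{[Ush11]} the generic semi-simplicity propagates to an open set of symplectic forms, but the vanishing cycles are Lagrangian for the anti-canonical form specifically, so one cannot just perturb the form to a monotone one. The expected machinery of \cite{[AFOOO]} should supply spectral invariants together with a closed--open/open--closed formalism valid for arbitrary closed symplectic manifolds; once available, Lemmas \ref{sphere spec inv relation} and \ref{lemma spheres idemp} --- and hence the shared-idempotent / pigeonhole argument that forbids the $D_4$- and $A_3$-configurations --- can be rerun verbatim. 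I expect the main obstacle to lie entirely on the symplectic side, namely in extending the spectral-invariant formalism outside the monotone regime, rather than in the algebro-geometric translation, which is a clean unpacking of vanishing cycles and the relative ampleness of $-\omega_{\mathcal{X}}$.
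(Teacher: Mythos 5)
Your proposal follows essentially the same route as the paper: the first bullet is exactly Theorem \ref{no DE config}, proved by establishing relative ampleness of the (inverse) dualizing sheaf near the origin (the paper's Claim \ref{relative ampleness near origin}, via the Gorenstein property and EGA) to get the monotone form on smooth fibers, then feeding the vanishing-cycle configurations --- including the $D_4$-subconfigurations extracted from Propositions \ref{parab dynkin} and \ref{reduction to parab} in the positive-modality case --- into Theorem \ref{no DE config SG}. You also correctly identify that the second bullet (generic semi-simplicity) is precisely the part the paper leaves conditional on \cite{[AFOOO]}, so the proposal matches the paper's treatment of this statement, which is stated as a conjecture rather than proved.
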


\begin{remark}\label{relation two assumptions}

\begin{enumerate}
    \item As we have pointed out in Section \ref{Semi-simplicity of QH}, the two versions of the semi-simplicity that we pose in {\conjec} \ref{ideal statement}, namely the monotone one and the generic one, will imply the semi-simplicity that is commonly used in the community of {\AG}.

\item {\conjec} \ref{ideal statement} does not hold with the the generic semi-simplicity (i.e. the second) assumption when $n=2$. In fact, the del Pezzo surface $\mathbb{D}_5= \C P^2 \# 5 \cdot (\overline{\C P^2})$ is known to have semi-simple {\QH} {\wrt} a generic {\symp} form, but it can {\degen} to a {\sing} del Pezzo surface with a $D_5$ {\sing}ity.
    
\end{enumerate}
\end{remark}

\begin{proof}[Proof of {\thm} \ref{no DE config}]
     As we have pointed out in Section \ref{Degeneration}, in order to reduce {\thm} \ref{no DE config} to {\thm} \ref{no DE config SG}, we need to prove that there is a $\pi$-relative ample line bundle that gives us a monotone {\symp} form on the general fibers (in the neighborhood of the origin).
  
     Suppose $X$ {\degen}s to a Fano variety $X_0$ with {\hypsurf} {\sing}ities, that is, we have a {\degtion} $\pi: \mathcal{X} \to \C$ of $X$ whose central fiber $X_0$ is Fano (in the sense of Remark \ref{gorenstein line bdl}) and has {\hypsurf} {\sing}ities. We claim the following in this situation.

     \begin{claim}\label{relative ampleness near origin}
      The variety $\mathcal{X}/ \C(=\mathcal{X})$ is Gorenstein. Thus, the dualizing sheaf $\Omega _{\mathcal{X}/ \C}=\Omega_{\mathcal{X}}$ is a line bundle over $\mathcal{X}$ (Remark \ref{gorenstein line bdl}). There exists a Zariski-open neighborhood $U \subset \C$ of the origin such that the restriction of $\Omega_{\mathcal{X}}$ to $\mathcal{X}|_{U}=\pi^{-1}(U)$ is $\pi$-relative ample line bundle. 
     \end{claim}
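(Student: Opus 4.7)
The plan is to break the claim into three stages: (1) $\mathcal{X}$ is Gorenstein, (2) the absolute and relative dualizing sheaves agree and restrict correctly to fibres, and (3) ampleness on the central fibre propagates to a Zariski-open neighbourhood. Throughout I read the conclusion as asserting that the \emph{anti}-canonical bundle $-\Omega_{\mathcal{X}}$ is $\pi$-relatively ample, since this is what is needed to produce a monotone symplectic form on the smooth fibres via \eqref{Fano is monotone}.

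\textbf{Step 1.} The variety $\mathcal{X}$ is smooth away from $\mathrm{Sing}(X_0)$ by Definition \ref{def of degen}, so Gorensteinness is automatic there. At a singular point $p \in X_0$, the hypothesis that $X_0$ has an isolated hypersurface singularity provides a smooth analytic (or \'etale-local) neighbourhood $V \ni p$ of dimension $n+1$ and a regular function $h$ with $X_0 \cap V = \{h=0\}$. Since $\pi$ is flat with central fibre $X_0$, a standard flat lifting argument produces a regular function $\tilde{H}$ on (a shrinking of) $V \times \C$ with $\tilde{H}|_{t=0}=h$ such that $\mathcal{X}$ is locally cut out by $\{\tilde{H}=0\}$. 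Hence $\mathcal{X}$ is locally a hypersurface in a smooth $(n+2)$-dimensional scheme, so a local complete intersection, and in particular Gorenstein. The dualizing sheaf $\Omega_{\mathcal{X}}$ is therefore invertible (Remark \ref{gorenstein line bdl}).

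\textbf{Step 2.} Because $\C$ is smooth with trivial canonical bundle, one has $\Omega_{\mathcal{X}/\C}\cong \Omega_{\mathcal{X}}\otimes\pi^{*}\Omega_{\C}^{-1}\cong\Omega_{\mathcal{X}}$. For a flat morphism with Gorenstein total space, the relative dualizing sheaf commutes with base change, so $\Omega_{\mathcal{X}/\C}|_{X_t}\cong\Omega_{X_t}$ on every fibre; in particular $-\Omega_{\mathcal{X}}|_{X_0}\cong-\Omega_{X_0}$ is ample by the Fano hypothesis on $X_0$.

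\textbf{Step 3.} After possibly shrinking to a Zariski-open neighbourhood of $0\in\C$, the morphism $\pi$ is proper (both the central fibre and the generic fibre are projective). Grothendieck's openness theorem for ampleness in proper flat families (EGA III, Th\'eor\`eme 4.7.1) implies that the locus $\{t\in\C : -\Omega_{\mathcal{X}}|_{X_t}\text{ is ample}\}$ is Zariski-open; since it contains $t=0$, we obtain a Zariski-open $U\ni 0$ on which $-\Omega_{\mathcal{X}}$ is $\pi$-relatively ample, as required. The main obstacle I anticipate is Step 1, namely the flat lifting of the defining equation $h$ at a singular point of $X_0$: this is where the \emph{hypersurface} nature of the singularity (as opposed to a more general isolated singularity) is indispensable. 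Once the local complete intersection structure of $\mathcal{X}$ is in hand, Steps 2 and 3 are routine applications of standard algebro-geometric machinery.
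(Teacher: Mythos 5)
Your argument is correct and reaches the same conclusion, but you take a genuinely different route to the Gorenstein property of $\mathcal{X}$. The paper factors through a general commutative-algebra statement (its Proposition \ref{gorenstein morphism}: a flat morphism with Gorenstein base and Gorenstein fibres has Gorenstein total space), applied with $Y=\C$ (smooth, hence Gorenstein), $X_t$ smooth for $t\neq 0$, and $X_0$ Gorenstein because isolated hypersurface singularities are local complete intersections. You instead argue directly that $\mathcal{X}$ itself is locally a hypersurface near $\mathrm{Sing}(X_0)$ — essentially the deformation-theoretic fact that flat deformations of a hypersurface singularity remain hypersurfaces — so $\mathcal{X}$ is a local complete intersection and hence Gorenstein. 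Both are sound. Your route is more concrete and avoids citing the base-change proposition, but it front-loads the real content into the "flat lifting" of the defining equation, which deserves a word of justification: the surjection $\mathfrak{m}_{\mathcal{X},p}/\mathfrak{m}^2_{\mathcal{X},p}\twoheadrightarrow \mathfrak{m}_{X_0,p}/\mathfrak{m}^2_{X_0,p}$ has kernel generated by the pullback of the parameter $t$, so the embedding dimension of $\mathcal{X}$ at $p$ is at most $n+2$; since $\dim\mathcal{X}=n+1$ and the ambient regular local ring is a UFD, the codimension-one subvariety $\mathcal{X}$ is locally principal. Steps 2 and 3 agree with the paper: both invoke properness and openness of (relative) ampleness from EGA III, Th\'eor\`eme 4.7.1. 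You are also right to read the claim as concerning $-\Omega_{\mathcal{X}}$ rather than $\Omega_{\mathcal{X}}$; given Remark \ref{gorenstein line bdl}, the paper's convention is that Fano means $-\omega_Y$ ample, so the sign in the claim and in the sentence ``equivalent to the ampleness of $\Omega_{X_0}$'' in the proof is a slip.
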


\begin{remark}
    \begin{enumerate}
        \item In this case, $\C$ satisfies $K_{\C}=0$, so we have
        $$\Omega_{\mathcal{X}/ \C} =\Omega_{\mathcal{X} } .$$

        \item A Zariski-open subset of $\C$ is the complement of finitely many points. 
    \end{enumerate}
\end{remark}

    Before we prove Claim \ref{relative ampleness near origin}, we continue with the proof of {\thm} \ref{no DE config}. Take a Zariski-open subset $U$ of $\C$ as in Claim \ref{relative ampleness near origin}. Then, we have a $\pi$-relative ample line bundle $\Omega_{\mathcal{X}}|_{\mathcal{X}|_{U}}$ on $\mathcal{X}|_{U}$. As explained in Section \ref{Degeneration}, this defines a projective embedding 
    $$ f_t: X_t \hookrightarrow \C P^N$$
    for every $t \in U \backslash \{0\}$ which gives us a {\symp} form $\omega_t:=(f_t)^\ast \omega_{\FS}$, where $\omega_{\FS}$ is the Fubini--Study form on $\C P^N$. Now, the {\symp} form $\omega_t$ is a monotone form on $X_t,\ t \neq 0$, as $\Omega_{\mathcal{X}}|_{\mathcal{X}|_{U}} \to \mathcal{X}|_{U}$ is $\pi$-relatively ample, and $\Omega_{\mathcal{X}}|_{X_t} =\Omega_{X_t}=K_{X_t}$, where $K_{X_t}$ is the canonical line bundle (see also \eqref{Fano is monotone}). Now, from the {\degtion}, we obtain a {\config} of {\lagsph}s in $(X_t,\omega_t)$ as the vanishing cycles of the {\hypsurf} {\sing}ities. We deal with simple {\sing}ities and isolated {\hypsurf} {\sing}ities of positive modality separately.

\textit{Case of simple {\sing}ities, i.e. modality zero.}
      First, assume $n$ is even. We argue by contradiction; assume that $X$ degenerates to a Fano variety $X_0$ with a D or E {\sing}ity. Then, as we have discussed above, $(X,\omega)$ contains a D or E {\config} of {\lagsph}s. In either case, as in Figure \ref{sphere with three hands extra}, there is a {\lagsph} $S$ that intersects three other {\lagsph}s $S_1,S_2, S_3$,
    $$|S \cap S_j| =1 ,\ 1 \leq j \leq 3 .$$
    
   \begin{figure}[h]
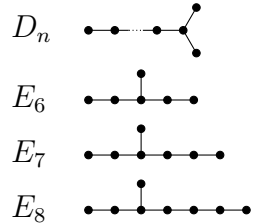

        \tble{D/{},E/{6},E/{7},E/{8}}
 \caption{The sphere $S$ corresponds to, the sphere at the end of the straight line in the $D_n$ diagram, and the third sphere in the $E_6,E_7,E_8$ diagrams, respectively.}
       \label{sphere with three hands extra}
   \end{figure}
By {\thm} \ref{no DE config SG}, these {\config}s do not occur. Thus, DE {\sing}ities cannot occur on $X$. Next, assume $n$ is even and $\frac{\dim_\C X +1}{2r_X} \notin \Z$ where $r_X$ is the Fano index. Similarly to the even $n$ case, by {\thm} \ref{no DE config SG}, an $A_3$-{\config} is prohibited, so $A_1$ and $A_2$ {\sing}ities are the only two simple {\sing}ities that can occur on $X$.

    \textit{Case of positive modality.} 
    The argument goes similarly to the case of modality zero, i.e. simple {\sing}ities. We argue by contradiction; assume that $X$ degenerates to a Fano variety $X_0$ with a positive modality {\sing}ity. Then $(X,\omega)$ contains a {\config} of {\lagsph}s coming from a positive modality {\sing}ity. From {\propo} \ref{reduction to parab}, we know that  the vanishing cycle of an isolated hypersurface {\sing}ity with positive modality includes the vanishing cycle of one of the three parabolic {\sing}ities $\wt{E}_{6},\wt{E}_{7},\wt{E}_{8}$. Keating studied the geometry of the vanishing cycles of the three parabolic {\sing}ities $\wt{E}_{6},\wt{E}_{7},\wt{E}_{8}$ and proved that the {\lagsph}s that appear in the vanishing cycles of the three parabolic {\sing}ities intersect as in the Dynkin diagram of Gabrielov \cite[Figure 4]{[Kea15]}. In the Dynkin diagram of Gabrielov \cite[Figure 4]{[Kea15]}, one can find a collection of four {\lagsph}s $\{S,S_1,S_2,S_3\}$ just as in the case of simple {\sing}ities: there is a {\lagsph} $S$ that intersects three other {\lagsph}s $S_1,S_2, S_3$,
    $$|S \cap S_j| =1 ,\ 1 \leq j \leq 3 .$$

\begin{remark}
    The {\lagsph}s $S,S_1,S_2,S_3$ correspond to the {\lagsph}s 4,1,2,3, respectively, in Keating's numbering in \cite[Figure 4]{[Kea15]}. 
\end{remark}

    The rest of the proof is exactly the same as the modality zero case. We complete the proof of {\thm} \ref{no DE config} by proving Claim \ref{relative ampleness near origin}. As we assume that $X_0$ is Fano, which is equivalent to the ampleness of the line bundle $\Omega_{X_0}=\Omega_{\mathcal{X}/ \C}|_{X_0}$, Claim \ref{relative ampleness near origin} is a direct consequence of \cite[Th\'eor\`eme 4.7.1]{[Gro61]} (see also \cite[{\thm} 1.2.17]{[Laz04]}). Note that \cite[Th\'eor\`eme 4.7.1]{[Gro61]} requires the morphism $\pi: \mathcal{X} \to \C$ to be proper and $\Omega_{\mathcal{X}} $ to be a line bundle. The properness of $\pi$ is satisfied, as all the fibers of $\pi: \mathcal{X} \to \C$ are compact. As for verifying that the dualizing sheaf $\Omega_{\mathcal{X}} $ is indeed a line bundle, it suffices to show that $\mathcal{X}$ is Gorenstein (see Remark \ref{gorenstein line bdl}), and this follows from the following proposition.

    \begin{prop}\label{gorenstein morphism}
        Let $X,Y$ be varieties and $\pi: X \to Y$ a flat morphism. If $Y$ is Gorenstein and all the fibers $\pi^{-1}(y),\ y \in Y $ are Gorenstein, then $X$ is also Gorenstein.
    \end{prop}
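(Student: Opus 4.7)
The plan is to reduce Proposition \ref{gorenstein morphism} to a standard commutative algebra statement, namely that Gorensteinness ascends along flat local homomorphisms whose closed fiber is Gorenstein. Since being Gorenstein is a local property on $X$, it suffices to show that the local ring $\mathcal{O}_{X,x}$ is Gorenstein for every point $x \in X$.

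Given such an $x$, set $y := \pi(x) \in Y$. Flatness of $\pi$ gives a flat local homomorphism $\mathcal{O}_{Y,y} \to \mathcal{O}_{X,x}$. By assumption $\mathcal{O}_{Y,y}$ is Gorenstein (since $Y$ is). The closed fiber of this homomorphism is
$$\mathcal{O}_{X,x}/\mathfrak{m}_{Y,y}\,\mathcal{O}_{X,x} \;\simeq\; \mathcal{O}_{\pi^{-1}(y),\,x},$$
which is Gorenstein because the fiber $\pi^{-1}(y)$ is Gorenstein by hypothesis.

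I would then invoke the following well-known ascent result (see, e.g., Matsumura, \emph{Commutative Ring Theory}, Theorem 23.4, or the \emph{Stacks Project}, Tag 0C03): for any flat local homomorphism $(A,\mathfrak{m}) \to (B,\mathfrak{n})$ of Noetherian local rings, if both $A$ and the closed fiber $B/\mathfrak{m}B$ are Gorenstein, then so is $B$. Applied with $A = \mathcal{O}_{Y,y}$ and $B = \mathcal{O}_{X,x}$, this immediately yields that $\mathcal{O}_{X,x}$ is Gorenstein, completing the proof.

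The only substantial content is the ascent statement itself; its proof typically combines ascent of Cohen--Macaulayness along flat local maps (from the dimension formula $\dim B = \dim A + \dim(B/\mathfrak{m}B)$ and lifting a system of parameters from the fiber) with a comparison of Bass numbers showing that the type of $B$ equals the product of the types of $A$ and $B/\mathfrak{m}B$. This is the only real obstacle; the localization reduction above is otherwise entirely routine.
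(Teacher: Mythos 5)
Your proof is correct. Note that the paper states Proposition~\ref{gorenstein morphism} without supplying any proof at all; it is invoked as a known fact to conclude that $\mathcal{X}$ is Gorenstein (so that $\Omega_{\mathcal{X}}$ is a line bundle), and the argument then moves on. Your reduction to the local statement --- localize at $x \in X$, observe that $\mathcal{O}_{Y,\pi(x)} \to \mathcal{O}_{X,x}$ is flat local with Gorenstein base and Gorenstein closed fiber $\mathcal{O}_{X,x}/\mathfrak{m}_{Y,\pi(x)}\mathcal{O}_{X,x} \simeq \mathcal{O}_{\pi^{-1}(\pi(x)),x}$, then apply the ascent theorem for Gorensteinness along flat local homomorphisms (Matsumura, \emph{Commutative Ring Theory}, Thm.~23.4) --- is exactly the standard argument one would give, and it correctly fills the gap the paper leaves implicit. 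There is nothing to compare against on the paper's side, but your proposal is a complete and correct proof of the stated proposition.
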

In our case
\begin{itemize}
    \item $X_t,\ t\neq 0$ is a smooth variety, and smooth varieties are Gorenstein,
    \item $X_0$ has at most isolated {\hypsurf} {\sing}ities, so it is Gorenstein,
\end{itemize}
and thus all the fibers of the flat morphism of the {\degtion} $\pi: \mathcal{X} \to \C$ are Gorenstein. Thus, $\Omega_{\mathcal{X}} $ is a line bundle from {\propo} \ref{gorenstein morphism}. We have completed the proof of {\thm} \ref{no DE config}.
   
\end{proof}

\begin{remark}\label{Cyclic quotient remark}
    It would be very interesting to study the spectral rigidity of {\sing}ities that are not isolated {\hypsurf} {\sing}ities, for example cyclic quotient {\sing}ities. They are `similar' to the $A_m$-{\sing}ities in the sense that their vanishing cycles are $A_m$-{\config}s attached to a certain {\sing} {\lag} called the {\lag} pinwheel, c.f. \cite[Section 1.2.3]{[Eva]}. In this case, it is interesting to see if a property similar to Lemma \ref{lemma spheres idemp} would hold between a {\lag} pinwheel and a {\lagsph}. Studying the spectral rigidity of {\lag} pinwheels is also an interesting topic. If one could prove results similar to Lemma \ref{sphere spec inv relation} for {\lag} pinwheels, it will bring new applications to Hofer geometry.
\end{remark}

\subsection{Proof of {\thm} \ref{Am config many qmor}}
In this section, we prove {\thm} \ref{Am config many qmor}.

    \begin{proof}[Proof of {\thm} \ref{Am config many qmor}]
    Denote the {\lag} spheres forming the $A_m$-{\config} by 
    $$\mathscr{S}_{A_m}:=\{S_1,\cdots,S_m\}.$$
    As $QH(X,\omega)$ is semi-simple, there is no nilpotent, thus any {\lag} sphere $L$ with $[L] \neq 0 \in QH (X)$ is not a nilpotent. This implies $\beta_L \neq 0$, where $\beta_L$ is the scalar in the cubic equation. Thus, for the $A_m$-{\config} $\mathscr{S}_{A_m}$ we have
    $$\beta= \beta_{\mathscr{S}_{A_m}} = \beta_{S_j} \neq 0 $$
    by \eqref{beta all coincide}. Thus, by \eqref{sphere idempotents}, each {\lagsph} $S_j$ gives rise to two {\idem}s $e^{S_j} _{\pm}$, where
    \begin{equation}
        e^{S_j} _{\pm}= \pm \frac{1}{4\sqrt{\beta}} [S_j] + \frac{1}{8\beta} [S_j]^2.
    \end{equation} 
    
     From Lemma \ref{sphere spec inv relation}, we have for every $j$,
     \begin{equation}
        \ovl{\ell}_{S_j} (H) = \max \{ \zeta_{e^{S_j} _{+}} (H),\ \zeta_{e^{S_j} _{-}} (H)\}
    \end{equation}
    for any {\hamil} $H$, and $S_j$ is $e^{S_j} _{\pm}$-{\suphv} for any $j$. By Lemma \ref{lemma spheres idemp}, the two {\idem}s corresponding to the {\lagsph}s that are next to each other in the $A_m$-{\config}, say $S_j$ and $S_{j+1}$, share one of the two; without loss of generality we can assume
    \begin{equation}
        e^{S_j} _{+}=e^{S_{j+1}} _{-}.
    \end{equation}
    
    To prove Theorem \ref{Am config many qmor}, it is enough to prove the following.
    
    \begin{claim}\label{EP qmor for Am config}
    The {\EP} {\qmor}s 
    $$\{\zeta_{e^{S_j} _{+}}\}_{1\leq j \leq m-1}$$
    are pairwise distinct.
    \end{claim}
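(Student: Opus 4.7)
The plan is to prove Claim \ref{EP qmor for Am config} in two stages: first show the $m-1$ idempotents $e^{S_1}_+, \ldots, e^{S_{m-1}}_+$ are pairwise distinct elements of $QH(X,\omega)$, and then upgrade this to distinctness of the corresponding Entov--Polterovich quasimorphisms by producing, for each pair of indices, a time-independent Hamiltonian on which the two quasimorphisms disagree. The key enabling observation is that Lemma \ref{sphere spec inv relation} makes each $S_\ell$ superheavy with respect to \emph{both} of its idempotents $e^{S_\ell}_\pm$, so every identification $e^{S_j}_+ = e^{S_{j+1}}_-$ yields a second superheavy representative (namely $S_{j+1}$, in addition to $S_j$) for the idempotent $e^{S_j}_+$.

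For the first stage, I fix $1 \leq j < k \leq m-1$ and assume for contradiction that $e^{S_j}_+ = e^{S_k}_+$. If $k = j+1$, the chosen normalization $e^{S_j}_+ = e^{S_{j+1}}_-$ collapses to $e^{S_{j+1}}_- = e^{S_{j+1}}_+$, contradicting that the two idempotents associated to $S_{j+1}$ are orthogonal and hence distinct. If instead $k \geq j+2$, the identifications $e^{S_j}_+ = e^{S_{j+1}}_-$ and $e^{S_k}_+ = e^{S_{k+1}}_-$ (which are available since $k+1 \leq m$) together with Lemma \ref{sphere spec inv relation} make both $S_{j+1}$ and $S_{k+1}$ superheavy with respect to the supposedly common idempotent; but $(k+1)-(j+1) = k-j \geq 2$ forces $S_{j+1} \cap S_{k+1} = \emptyset$, contradicting Proposition \ref{suphv constant}.

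For the second stage, given $j < k$ in $\{1,\ldots,m-1\}$, I exhibit a Hamiltonian $H$ with $\zeta_{e^{S_j}_+}(H) \neq \zeta_{e^{S_k}_+}(H)$. When $k \geq j+2$, the spheres $S_j$ (superheavy for $e^{S_j}_+$) and $S_{k+1}$ (superheavy for $e^{S_{k+1}}_- = e^{S_k}_+$) are disjoint, so a smooth function $H$ equal to $0$ near $S_j$ and $1$ near $S_{k+1}$ satisfies $\zeta_{e^{S_j}_+}(H)=0 \neq 1 = \zeta_{e^{S_k}_+}(H)$ by Proposition \ref{suphv constant}. In the remaining case $k = j+1$, where $j+2 \leq m$ is automatic from $k \leq m-1$, the same bump-function argument succeeds with the disjoint pair $(S_j, S_{j+2})$, using that $S_{j+2}$ is superheavy for $e^{S_{j+2}}_- = e^{S_{j+1}}_+ = e^{S_k}_+$. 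No serious obstacle is anticipated; the argument is essentially a bookkeeping exercise in the chain structure of idempotents dictated by the $A_m$ Dynkin diagram.
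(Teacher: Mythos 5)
Your proposal is correct, and its second stage is essentially identical to the paper's own argument: for $j<k$ the paper pairs the sphere $S_j$ (which is $e^{S_j}_+$-superheavy) with the sphere $S_{k+1}$ (which is $e^{S_k}_+$-superheavy via $e^{S_k}_+=e^{S_{k+1}}_-$), notes they are disjoint, and invokes Proposition \ref{suphv constant}. Your first stage, proving the idempotents themselves are pairwise distinct, is logically redundant -- the bump-function argument of stage two already shows the quasimorphisms take different values, which is all the claim requires -- but it is correct as stated.
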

    
    We prove this claim. We need to prove that for any $i,j$ such that $1\leq i < j\leq m-1$, we have $\zeta_{e^{S_i} _{+}} \neq \zeta_{e^{S_j} _{+}}$. For such $i,j$, we have that 
    \begin{itemize}
        \item $S_i$ is $e^{S_i} _{+}$-{\suphv},
        \item $S_{j+1}$ is $e^{S_j} _{+}$-{\suphv}, as $e^{S_j} _{+} = e^{S_{j+1}} _{-}$.
    \end{itemize}
    As $j-i \geq 1$, we have $S_i \cap S_{j+1} = \emptyset$. Thus, 
    $$\zeta_{e^{S_i} _{+}} \neq \zeta_{e^{S_j} _{+}}.$$
    This proves Claim \ref{EP qmor for Am config} and thus completes the proof of Theorem \ref{Am config many qmor}.

    \end{proof}

We now prove {\thm} \ref{D4 four qmors}. In \cite[{\thm} C(2)]{[Kaw]}, the author proved the existence of three {\EP} {\qmor}s on $\wt{\Ham}(\mathbb{D}_4)$. The proof uses a set of {\lag} {\config}s arising from a {\tordeg} studied by Y. Sun in \cite{[Sun20]} that consists of a {\lag} torus, an $A_1$-{\config}, and an $A_2$-{\config}. Here, we manage to improve \cite[{\thm} C(2)]{[Kaw]} by completing the union of the $A_1$ {\config} and the $A_2$ {\config} to an $A_4$ {\config} by finding an additional {\lagsph} using some four (real) dimensional technique. This allows us to apply {\thm} \ref{Am config many qmor} and complete the proof.

\begin{proof}[Proof of {\thm} \ref{D4 four qmors}]
First, we find four {\linindep} {\EP} {\qmor}s on $\wt{\Ham}(\mathbb{D}_4)$, the universal cover of $\Ham(\mathbb{D}_4)$. By considering the degeneration of the del Pezzo surface $\mathbb{D}_4$ studied in \cite[Appendix B, case of $X_6$]{[Sun20]}, one obtains a monotone {\lag} torus $L$, an $A_2$ {\config} $\mathscr{S}_{A_2}:=\{S_1,S_2\} $, an $A_1$ {\config} $\mathscr{S}_{A_1}:=\{S_3\} $, that are mutually disjoint (i.e. $L \cap S_1 = \emptyset$, $L \cap \left( S_2 \cup S_3 \right) = \emptyset$, $S_1 \cap \left( S_2 \cup S_3 \right) = \emptyset$). By looking at the moment polytope of the degeneration, one can see that the homology classes represented by the $A_2$ {\config} $\mathscr{S}_{A_2}$ and the $A_1$ {\config} $\mathscr{S}_{A_1}$ are as follows \footnote{This was communicated to the author by Yuhan Sun.} :
\begin{equation}
    \begin{gathered}
        [S_1]= E_2 -E_3, \\
        [S_2] = E_3-E_4 ,\\
        [S_3] = H - E_2 -E_3 -E_4 .
    \end{gathered}
\end{equation}
Now, consider the class $E_4-E_1$. By \cite[Lemma 5.3]{[BLW14]}, one can take a {\lagsph} $S$ that represents the class $E_4-E_1$, i.e. 
$$[S] = E_4-E_1 , $$
that is disjoint from the {\lagsph} $S_1$, i.e. $S_1 \cap S =\emptyset$. By computing the intersection number, one can see that the {\lagsph}s $S_1,S_2,S,S_3$ form a \textit{partial $A_4$ {\config}} in the sense that they satisfy the intersection property \eqref{intersection Am} expect for $S$, which instead satisfies the following weaker, homological version of the intersection property
\begin{equation}
 [S] \cdot [S_j]=
    \begin{cases}
        1 \text{   if $j=2,3$} \\
        0 \text{   if $j=1$} .
    \end{cases}
\end{equation}
However, this is enough to obtain three {\EP} {\qmor}s 
$$\zeta_{e^{S_1} _+},\zeta_{e^{S_2} _+},\zeta_{e^{S} _+}=\zeta_{e^{S_3} _-}$$
which come from the {\lagsph}s involved in the partial $A_4$ {\config}, and they are {\linindep}, as the {\lagsph}s $S_1,S_2,S,S_3$ above satisfy $S_1 \cap S = \emptyset,\ S_2 \cap S_3 = \emptyset$. This is enough to conclude $\zeta_{e^{S_1} _+} \neq \zeta_{e^{S_2} _+},\ \zeta_{e^{S_2} _+} \neq \zeta_{e^{S} _+}$. As it was shown in \cite[Proof of {\thm} C]{[Kaw]}, there is an {\EP} {\qmor} $\zeta_{e^L}$ for which the monotone {\lag} torus $L$ is $e^L$-{\suphv}. As the {\lag} torus $L$, the $A_2$ {\config} $\mathscr{S}_{A_2}$, and the $A_1$ {\config} $\mathscr{S}_{A_1}$ are all mutually disjoint, we conclude that 
\begin{equation}\label{D4 EP qmors}
    \{\zeta_j\}_{1 \leq j \leq 4} :=\{ \zeta_{e^L}, \zeta_{e^{S_1} _+},\zeta_{e^{S_2} _+},\zeta_{e^{S} _+}=\zeta_{e^{S_3} _-} \}
\end{equation}
are four {\linindep} {\EP} {\qmor}s on $\wt{\Ham}(\mathbb{D}_4)$. Evans proved that $\Symp_0 (\mathbb{D}_4) = \Ham(\mathbb{D}_4)$ is weakly contractible, i.e. $\pi_k (\Ham(\mathbb{D}_4)) = \{\id\}$ for all $k \in \N$ \cite[{\thm} 1.3]{[Eva11]}. Thus, any {\qmor} on $\wt{\Ham}(\mathbb{D}_4)$ descends to a {\qmor} on $\Ham(\mathbb{D}_4)$ and thus, $\{\zeta_j\}_{1 \leq j \leq 4}$ defines four pairwise distinct {\EP} {\qmor}s on $\Ham(\mathbb{D}_4)$. As each of the four {\EP} {\qmor}s is Hofer Lipschitz continuous, we conclude that $\R ^4$ embeds quasi-isometrically to $\Ham(\mathbb{D}_4)$, which in particular answers the Kapovich--{\pol} question for $X=\mathbb{D}_4$ in the negative. The three {\qmor}s that are $C^0$ and Hofer-Lipschitz continuous can be obtained as in \cite[{\thm} 22]{[Kaw22]}. This completes the proof of {\thm} \ref{D4 four qmors}.
\end{proof}

\begin{remark}\label{six qmor}
    It seems very likely that one can furthermore prove that {\qmor}s $\zeta_{e^{S_1} _-},\ \zeta_{e^{S_3} _+}$ are pairwise distinct to the {\qmor}s in \eqref{D4 EP qmors} and obtain six pairwise distinct {\qmor}s on $\Ham (\mathbb{D}_4)$. We will just briefly outline the argument: given an $A_m$-{\config} $\{L_j\}_{1 \leq j \leq m}$ in a del Pezzo surface, one can construct a new {\lagsph} $L_{m+1}$ that completes the $A_m$ {\config} into a $m+1$-gon by taking a {\lagsph} representing the class $[L_{m+1}]=-(\sum_{j=1} ^m [L_j])$ (Seidel considers this {\config} of {\lagsph}s in \cite[Example 1.10]{Sei08}). It should be able to show that $L_{m+1}$ can be taken so that it is disjoint to $\{L_j\}_{2 \leq j \leq m-1}$ by a similar argument to above, which shows that $\zeta_{e^{S_1} _-},\ \zeta_{e^{S_3} _+}$ are pairwise distinct to the {\qmor}s in \eqref{D4 EP qmors} by {\suphvness}.
\end{remark}

\subsection{Proof of {\thm} \ref{Dehn twist spec inv}}

In this section, we prove {\thm} \ref{Dehn twist spec inv}. For the definition and the basic properties of the Dehn twist, we refer to \cite{[Sei97]}.

\begin{proof}[Proof of {\thm} \ref{Dehn twist spec inv}]

\textit{Case of even $n$:}
First, notice that as $\beta$ is a {\symp} invariant and $\tau_{L}  \in \Symp(X)$, we have
$$\beta_{\tau_{L}(L')} = \beta_L .$$
Note that we have $\beta_L \neq 0$, as $QH(X,\omega)$ is semi-simple. We denote them all by $\beta$, i.e.
$$\beta:=\beta_{\tau_{L}(L')} = \beta_L \neq 0 . $$

From \eqref{sphere idempotents}, the {\idem}s induced by $\tau_{L}(L')$ are as follows.

\begin{equation}
    \begin{aligned}
       e^{\tau_{L}(L')} _{\pm} & = \pm \frac{1}{4\sqrt{\beta}} [\tau_{L}(L')] + \frac{1}{8\beta} [\tau_{L}(L')]^2 \\
       &= \pm \frac{1}{4\sqrt{\beta}} (\tau_{L})_\ast[L'] + \frac{1}{8\beta} \left( (\tau_{L})_\ast[L']\right)^2 .
    \end{aligned}
\end{equation}

The Picard--Lefschetz formula implies 
\begin{equation}\label{Picard Lefschetz}
    (\tau_{L})_\ast[L'] = [L'] - (-1)^{n(n-1)/2} ([L] \cdot [L']) [L],
\end{equation}
and we also have
\begin{equation}\label{homology with idem}
    \begin{gathered}
       [L]= 2 \sqrt{\beta} (e^{L} _+ - e^{L} _-),\\
        [L']= 2 \sqrt{\beta} (e^{L'} _+ - e^{L'} _-).
    \end{gathered}
\end{equation}
As {\lagsph}s $\{L,L'\}$ form an $A_2$ {\config}, i.e. $|L \cap L'|=1$, there are two possibilities for the intersection number, i.e. $[L] \cdot [L'] =\pm 1$. We have the following relation between the homological intersection number $[L] \cdot [L']$ and the {\idem}s corresponding to $L$ and $L'$.

    \begin{prop}\label{intersection and idempotent}
     Let $(X,\omega)$ is a $2n$-dimensional monotone {\symp} {\mfd} with even $n$. Suppose there is an $A_2$ {\config} $\{L,L'\}$. The {\idem}s corresponding to $L$ and $L'$, namely $\{ e^{L} _+,\ e^{L} _- \}$ and $\{ e^{L'} _+,\ e^{L'} _- \}$, and the intersection number $[L] \cdot [L'] $ have the following relationship:
    \begin{enumerate}
        \item  $(-1)^{n(n-1)/2}[L] \cdot [L'] =-1 $ if and only if $e^{L} _+ = e^{L'} _-$ or $e^{L} _- = e^{L'} _+$.
        
        \item  $(-1)^{n(n-1)/2}[L] \cdot [L'] = 1 $ if and only if $e^{L} _+ = e^{L'} _+$ or $e^{L} _- = e^{L'} _-$.
        
    \end{enumerate}
    
    \end{prop}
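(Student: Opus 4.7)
My plan is to compute $[L]\ast[L']$ in two complementary ways and match them via the Poincar\'e pairing $\int_X\colon QH(X,\omega)\to\Lambda$ on top-degree classes.

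First, since $[L]\cdot[L']=\pm 1\neq 0$, the intersection identity \eqref{intersection property} yields $\beta:=\beta_L=\beta_{L'}\neq 0$ (the nonvanishing is what lets the $e^L_\pm$ and $e^{L'}_\pm$ be defined at all). Using \eqref{L with idempotents},
$$[L]\ast[L']=4\beta\,(e^L_+-e^L_-)\ast(e^{L'}_+-e^{L'}_-),$$
and since by Claim \ref{sphere unit field factor} the $e^L_\pm$ and $e^{L'}_\pm$ are units of field factors of $QH(X,\omega)$, each product $e^L_a\ast e^{L'}_b$ is either $0$ or equals the common idempotent exactly when $e^L_a=e^{L'}_b$.

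Second, I show that \emph{exactly one} of the four coincidences $e^L_a=e^{L'}_b$ occurs. Lemma \ref{lemma spheres idemp} gives at least one. If two were to hold, then either $\{e^L_+,e^L_-\}=\{e^{L'}_+,e^{L'}_-\}$ matching the signs (forcing $[L]=[L']$) or reversing them (forcing $[L]=-[L']$); either way $|[L]\cdot[L']|=|[L]\cdot[L]|=2$, contradicting $[L]\cdot[L']=\pm 1$. Here I invoke $[L]\cdot[L]=2(-1)^{n(n-1)/2}$ for a Lagrangian $n$-sphere with even $n$, which one can extract from the Picard--Lefschetz formula \eqref{Picard Lefschetz} applied to $\tau_L$ on $L$ itself together with $\deg(\tau_L|_L)=(-1)^{n+1}$. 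Writing $c:=(-1)^{n(n-1)/2}$, the four mutually exclusive cases give
\begin{equation*}
[L]\ast[L']=\begin{cases}+4\beta\,e^L_+,&\text{if }e^L_+=e^{L'}_+,\\-4\beta\,e^L_+,&\text{if }e^L_+=e^{L'}_-,\\-4\beta\,e^L_-,&\text{if }e^L_-=e^{L'}_+,\\+4\beta\,e^L_-,&\text{if }e^L_-=e^{L'}_-,\end{cases}
\end{equation*}
so the sign is $+$ exactly when the sign subscripts of the shared idempotent agree.

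Third, I distinguish the cases using $\int_X$. The key input is that for $a,b\in QH(X,\omega)$ with $|a|+|b|=2n$ one has $\int_X a\ast b=\int_X a\cup b$: every quantum correction $a\ast_A b$ with $\omega(A)>0$ lies in cohomological degree $|a|+|b|-2\langle c_1,A\rangle<2n$ by monotonicity, hence contributes $0$ to the top-degree component. Applied here,
$$\int_X[L]\ast[L']=[L]\cdot[L'],\qquad \int_X[L]\ast[L]=[L]\cdot[L]=2c,$$
and since $[L]\in H^n(X)$ with $n<2n$, $\int_X[L]=0$, so
$$\int_X e^L_\pm=\pm\frac{1}{4\sqrt{\beta}}\int_X[L]+\frac{1}{8\beta}\int_X[L]\ast[L]=\frac{c}{4\beta}.$$
Substituting into the four cases yields $\int_X[L]\ast[L']=+c$ in the two same-sign cases and $-c$ in the two opposite-sign cases; since $c=\pm 1$ this is exactly the dichotomy $(-1)^{n(n-1)/2}\cdot[L]\cdot[L']=\pm 1$, giving both directions of (1) and (2).

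The main technical point is the Poincar\'e-pairing identity in the third step: $\int_X$ is only $\Lambda$-linear, not a ring homomorphism, so one has to verify explicitly that every nonzero quantum correction to $[L]\ast[L']$ and $[L]\ast[L]$ lands in strictly lower cohomological degree. This is precisely where the monotonicity hypothesis enters, through the degree-shift formula $|a\ast_A b|=|a|+|b|-2\langle c_1,A\rangle$ combined with $\langle c_1,A\rangle>0$ whenever $\omega(A)>0$.
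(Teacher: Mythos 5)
Your proposal is correct and follows essentially the same route as the paper: express $[L]$ and $[L']$ via the idempotents, use Lemma~\ref{lemma spheres idemp} together with the observation that $|[L]\cdot[L']|=1\neq 2=|[L]\cdot[L]|$ to pin down that exactly one idempotent is shared, and then distinguish the four cases by applying $\int_X$ and computing $\int_X e^L_\pm = (-1)^{n(n-1)/2}/4\beta$. The two minor cosmetic differences are that you derive $[L]\cdot[L]=2(-1)^{n(n-1)/2}$ from the Picard--Lefschetz formula whereas the paper invokes it directly as $(-1)^{n(n-1)/2}\chi(L)$, and that you make explicit the monotonicity/degree-shift argument behind the equality $\int_X a\ast b=\int_X a\cup b$ which the paper summarizes as ``degree reasons''; both are harmless and, in the second case, a helpful clarification of a point the paper treats tersely.
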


\begin{remark}
    As pointed out in Remark \ref{rmk shared idem}, only one of $\{ e^{L} _+,\ e^{L} _- \}$ and $\{ e^{L'} _+,\ e^{L'} _- \}$ is shared. See the proof of {\propo} \ref{intersection and idempotent} for this.
\end{remark}

We prove {\propo} \ref{intersection and idempotent} at the end of this section, after the proof of {\thm} \ref{Dehn twist spec inv}. First, we assume $(-1)^{n(n-1)/2}[L] \cdot [L'] =-1$. From {\propo} \ref{intersection and idempotent}, this implies that we have either $e^{L} _+=e^{L'} _-$ or $e^{L} _-= e^{L'} _+$. We assume the former, as the proof for the latter goes identically.

Putting equations \eqref{Picard Lefschetz} and \eqref{homology with idem} together, we get 
\begin{equation}\label{Dehn twisted homology}
\begin{aligned}
     (\tau_{L})_\ast[L'] &=[L'] + [L] \\
     & = 2 \sqrt{\beta} \left( (e^{L'} _+ - e^{L'} _-) + (e^{L} _+ - e^{L} _-)\right) \\
     & =  2 \sqrt{\beta} (e^{L'} _+ - e^{L} _-).
\end{aligned}
\end{equation}

From \eqref{Dehn twisted homology}, we get
\begin{equation}\label{Dehn twisted idem}
    \begin{aligned}
       e^{\tau_{L}(L')} _{\pm} & = \pm \frac{1}{4\sqrt{\beta}} [\tau_{L}(L')] + \frac{1}{8\beta} [\tau_{L}(L')]^2 \\
       & = \pm \frac{1}{4\sqrt{\beta}} 2 \sqrt{\beta} (e^{L'} _+ - e^{L} _-) + \frac{1}{8\beta} \left( 2 \sqrt{\beta} (e^{L'} _+ - e^{L} _-) \right)^2 \\
       & = \pm \frac{1}{2}(e^{L'} _+ - e^{L} _-) + \frac{1}{2}(e^{L'} _+ + e^{L} _-) \\
       & = \begin{cases}
          \text{$e^{L'} _+$\quad if $\pm=+$},\\
            \text{$e^{L} _-$\quad if $\pm=-$}.
       \end{cases}
    \end{aligned}
\end{equation}
The equation \eqref{Dehn twisted idem} and Lemma \ref{sphere spec inv relation} imply 
\begin{equation}
    \ovl{\ell}_{\tau_{L}(L')} = \max \{  \zeta_{e^{L} _-} , \zeta_{e^{L'} _+}\}.
\end{equation}
Comparing it with
\begin{equation}
\begin{gathered}
   \ovl{\ell}_{L} = \max \{ \zeta_{e^{L} _+},  \zeta_{e^{L} _-}\},\\
   \ovl{\ell}_{L'} = \max \{ \zeta_{e^{L'} _+},  \zeta_{e^{L'} _-}\},
\end{gathered}
\end{equation}
we obtain
\begin{equation}\label{dehn reduce even}
    \ovl{\ell}_{\tau_{L}(L')} \leq \max \{   \ovl{\ell}_{L},   \ovl{\ell}_{L'}  \}.
\end{equation}
The other case where we have $(-1)^{n(n-1)/2}[L] \cdot [L'] =+1$ can be dealt exactly in the same way.

\textit{Case of odd $n$:} 
As we have seen in the proof of {\thm} \ref{no DE config SG} for the case of odd $n$, for any {\lagsph} $L$, we can find a distinguished unique unit $e^L$, which satisfies 
$$\ovl{\ell}_{L} =   \zeta_{e^L} . $$
We have also seen that for any {\lagsph}s satisfying $HF(L,L')\neq 0$, we have $e^L=e^{L'}$. Now, for an $A_2$-{\config} $L,L'$, we have $HF(L,L') \neq 0$ and $HF(L,\tau_{L}(L') ) \neq 0$, so we have
$$e^L=e^{L'}=e^{\tau_{L}(L') } .$$
Thus,
$$\ovl{\ell}_{\tau_{L}(L')}  =  \ovl{\ell}_{L} =  \ovl{\ell}_{L'} (= \zeta_{e^L}=\zeta_{e^{L'}}=\zeta_{e^{\tau_{L}(L')}}) ,$$
so we have
\begin{equation}\label{dehn reduce odd}
    \ovl{\ell}_{\tau_{L}(L')} =  \max \{   \ovl{\ell}_{L},   \ovl{\ell}_{L'}  \}.
\end{equation}

We have completed the proof of {\thm} \ref{Dehn twist spec inv}.
\end{proof}

We end this section with a proof of {\propo} \ref{intersection and idempotent} which was used in the proof of {\thm} \ref{Dehn twist spec inv}.

     \begin{proof}[Proof of {\propo} \ref{intersection and idempotent}]
   First of all, the assumption $[L] \cdot [L'] \neq 0$ implies $\beta_L = \beta_{L'} \neq 0$. We define
   $$\beta:= \beta_L = \beta_{L'} \neq 0 .$$

   By using
    \begin{equation}\label{expression used in lemma}
        \begin{gathered}
           [L]= 2\beta ^{1/2} (e^{L} _{+}-e^{L} _{-}),\\
           [L']= 2\beta ^{1/2}( e^{L'} _{+}-e^{L'} _{-}),
        \end{gathered}
    \end{equation}
we have

    \begin{equation}\label{A2 config intersection}
        \begin{aligned}
           [L] \cdot [L'] & = \int_{X} 4 \beta (e^{L} _+ - e^{L} _-) \cdot (e^{L'} _+ - e^{L'} _-) \\
          & = 4 \beta \int_{X}  (
          e^{L} _+ \cdot e^{L'} _+ 
          - e^{L} _- \cdot e^{L'} _+ 
          - e^{L} _+ \cdot e^{L'} _+ 
          + e^{L} _- \cdot e^{L'} _- ) .
        \end{aligned}
    \end{equation}

We will show that only one of the four terms survive. This is because only one of the two {\idem}s between $\{e^{L} _{+},\ e^{L} _{-} \}$ and $\{e^{L'} _{+},\ e^{L'} _{-} \} $ is shared, i.e. $\{e^{L} _{+},\ e^{L} _{-} \} \cap \{e^{L'} _{+},\ e^{L'} _{-} \} \neq \emptyset $ and $\{e^{L} _{+},\ e^{L} _{-} \} \neq \{e^{L'} _{+},\ e^{L'} _{-} \}  $. To see this, first from Lemma \ref{lemma spheres idemp}, we know that $\{e^{L} _{+},\ e^{L} _{-} \} \cap \{e^{L'} _{+},\ e^{L'} _{-} \} \neq \emptyset $. However, we also have $\{e^{L} _{+},\ e^{L} _{-} \} \neq \{e^{L'} _{+},\ e^{L'} _{-} \}  $, as if we had $\{e^{L} _{+},\ e^{L} _{-} \} = \{e^{L'} _{+},\ e^{L'} _{-} \}  $, then \eqref{expression used in lemma} implies 
$$[L] = \pm [L'] .$$
On one hand, $L,L'$ forming an $A_2$ {\config} implies 
$$[L] \cdot [L'] = \pm 1,$$
but on the other hand, $L$ being an even-dimensional {\lagsph} implies 
$$[L] \cdot [L'] = [L] \cdot (\pm [L] )= \pm 2 ,$$
which is a contradiction. Thus, we have proven that only one of the two {\idem}s between $\{e^{L} _{+},\ e^{L} _{-} \} $ and $\{e^{L'} _{+},\ e^{L'} _{-} \} $ is shared. From the proof of Lemma \ref{lemma spheres idemp}, when two {\idem}s from $\{e^{L} _{+},\ e^{L} _{-} \} $ and $\{e^{L'} _{+},\ e^{L'} _{-} \} $ do not coincide, then they are orthogonal to each other, so only one of the four terms in \eqref{A2 config intersection} remains.

We have 
\begin{equation}\label{eL integration}
    \begin{aligned}
        \int_{X} e^{L} _{\pm} & = \int_{X} \left( \pm \frac{1}{4\sqrt{\beta}} [L] + \frac{1}{8\beta} [L]^2 \right) \\
        & = \int_{X}  \frac{1}{8\beta} [L]^2 \\
        & = \frac{1}{8\beta} \int_{X}   [L] \cup [L]  \\
        &=\frac{1}{8\beta} [L] \cdot [L] \\
        & = \frac{1}{8\beta} \cdot (-1)^{n(n-1)/2} \chi (L) \\
        &= (-1)^{n(n-1)/2} \cdot \frac{1}{4\beta}.
    \end{aligned}
\end{equation}
Note that the second and the third equality follow from degree reasons, and the fifth equality uses the fact that $L$ is a {\lagsph}, $\chi (L)$ is the Euler characteristic of $L$, i.e. two. The same property applies for $L'$:
\begin{equation}\label{eL' integration}
    \int_{X} e^{L'} _{\pm}  = (-1)^{n(n-1)/2} \cdot \frac{1}{4\beta} .
\end{equation}
Now, it follows from \eqref{A2 config intersection}, \eqref{eL integration}, and \eqref{eL' integration} that the intersection number $[L] \cdot [L'] $ satisfies the following:
\begin{itemize}
    \item $e^{L} _+ = e^{L'} _-$ or $e^{L} _- = e^{L'} _+$ if and only if $[L] \cdot [L'] = -(-1)^{n(n-1)/2} .$

    \item $e^{L} _+ = e^{L'} _+$ or $e^{L} _- = e^{L'} _-$ if and only if $[L] \cdot [L'] = (-1)^{n(n-1)/2} .$
\end{itemize}
This completes the proof of {\propo} \ref{intersection and idempotent}.
        
    \end{proof}

     \begin{remark}
         A recent work of Biran--Cornea \cite[Lemma 5.3.1]{BC21} contains a result that is based on a spirit similar to {\thm} \ref{Dehn twist spec inv}. Although they work in a different setting, namely with exact {\lag}s in a Liouville {\mfd}, it would be interesting to study if one can obtain a similar inequality to {\thm} \ref{Dehn twist spec inv} by their method.
     \end{remark}

\appendix

\Addresses


\begin{thebibliography}{99}




\bibitem[Abr00]{[Abr00]} Lowell Abrams, The quantum Euler class and the quantum cohomology of the Grassmannians, \emph{Isr. J. Math.}, 117, 335-352 (2000).



\bibitem[AFOOO]{[AFOOO]} Mohammed Abouzaid, Kenji Fukaya, Yong-Geun Oh, Hiroshi Ohta, Kaoru Ono, Quantum cohomology and split generation in Lagrangian Floer theory, \emph{in preparation.}

\bibitem[Alb05]{[Alb05]} Peter Albers, \emph{On the extrinsic topology of Lagrangian submanifolds}, 
IMRN 2005, 38, 2341--2371, Erratum IMRN 2010 7, 1363--1369


\bibitem[Amb]{Amb} Giovanni Ambrosioni, Filtered Fukaya categories, arXiv:2306.13600v1


\bibitem[Arn76]{[Arn76]} Vladimir Igorevich Arnold, Local normal forms of functions. 
\emph{Invent. Math.} 35 (1976), 87--109. 




\bibitem[Arn95]{[Arn95]} Vladimir Igorevich Arnold, Some remarks on symplectic monodromy of Milnor fibrations, \emph{The Floer memorial volume}, 99--103, 
Progr. Math., 133, Birkh\"auser, Basel, 1995. 




\bibitem[AGLV93]{[AGLV93]} Vladimir Igorevich Arnold, Victor Goryunov, O. Lyashko, Victor Vasil'ev, Singularity theory. I. \emph{Springer, Berlin} (1998). Translated from the 1988 Russian original by A. Iacob, Reprint of the original English edition from the series Encyclopaedia of Mathematical Sciences [ıt Dynamical systems. VI, Encyclopaedia Math. Sci., 6, Springer, Berlin, 1993



\bibitem[BM04]{[BM04]} Arend Bayer, Yuri I. Manin, (Semi)simple exercises in quantum cohomology, The Fano Conference, 143--173, Univ. Torino, Turin, 2004. 


\bibitem[Bir02]{[Bir02]} Paul Biran, Geometry of symplectic intersections, \emph{Proceedings of the International Congress of Mathematicians}, Vol. II (Beijing, 2002), 241–255, Higher Ed. Press, Beijing, 2002. 


\bibitem[BC]{[BC]} Paul Biran, Octav Cornea, Quantum Structures for Lagrangian Submanifolds, \emph{preprint} arXiv:0708.4221v1 


\bibitem[BC08]{[BC08]} Paul Biran, Octav Cornea, Rigidity and uniruling for Lagrangian submanifolds, \emph{Geom. Topol.} 13 (2009), no. 5, 2881–2989. 




\bibitem[BC12]{BC12} Paul Biran, Octav Cornea, Lagrangian topology and enumerative geometry, \emph{Geom. Topol.} 16, No. 2, 963-1052 (2012).





\bibitem[BC21]{BC21} Paul Biran, Octav Cornea, Bounds on the Lagrangian spectral metric in cotangent bundles,
\emph{Comment. Math. Helv.} 96, No. 4, 631--691 (2021).





\bibitem[BCZ]{BCZ} Paul Biran, Octav Cornea, Jun Zhang, Triangulation, Persistence, and Fukaya categories, arXiv:2304.01785v1 




\bibitem[BM16]{[BM16]} Paul Biran, Cedric Membrez, The Lagrangian cubic equation, \emph{Int. Math. Res. Not. IMRN} 2016, no. 9, 2569--2631. 





\bibitem[BLW14]{[BLW14]} Matthew Strom Borman, Tian-Jun Li, Weiwei Wu, Spherical Lagrangians via ball packings and symplectic cutting, \emph{Selecta Mathematica} volume 20, pages 261--283 (2014)



\bibitem[CM95]{[CM95]} Bruce Crauder, Rick Miranda, Quantum cohomology of rational surfaces, The moduli space of curves (Texel Island, 1994), 33–80, 
\emph{Progr. Math.}, 129, Birkhäuser Boston, Boston, MA, 1995. 
 


\bibitem[CGHS]{[CGHS]} Daniel Cristofaro-Gardiner, Vincent Humili\`ere, Sobhan Seyfaddini, PFH spectral invariants on the two-sphere and the large scale geometry of Hofer's metric, arXiv:2102.04404v3, To appear in \emph{J. Eur. Math. Soc. (JEMS).}


\bibitem[Cio05]{[Cio05]} Gianni Ciolli, On the quantum cohomology of some Fano threefolds and a conjecture of Dubrovin, \emph{Internat. J. Math.} 16 (2005), no. 8, 823--839. 




\bibitem[CMP10]{[CMP10]} Pierre-Emmanuel Chaput, Laurent Manivel, Nicolas Perrin, Quantum cohomology of minuscule homogeneous spaces III : semi-simplicity and consequences. Canad. J. Math. 62 (2010), no. 6, 1246–1263


 

\bibitem[DK01]{[DK01]} Jean-Pierre Demailly, J\'anos Koll\'ar, Semi-continuity of complex singularity exponents and K\"ahler-Einstein metrics on Fano orbifolds, \emph{Ann. Sci. \'Ecole Norm. Sup.} (4) 34 (2001), no. 4, 525–556. 


\bibitem[Don00]{[Don00]} Simon Donaldson, Polynomials, vanishing cycles and Floer homology, \emph{Mathematics: frontiers and perspectives}, 55–64, Amer. Math. Soc., Providence, RI, 2000. 


\bibitem[Dol96]{[Dol96]} Igor V. Dolgachev, Mirror symmetry for lattice polarized K3 surfaces, Algebraic geometry, 4. \emph{J. Math. Sci.} 81 (1996), no. 3, 2599--2630. 



\bibitem[Dub96]{[Dub96]} Boris Dubrovin, Geometry of 2D topological field theories, \emph{Integrable systems and quantum groups} (Montecatini Terme, 1993), 120–348, 
Lecture Notes in Math., 1620, Fond. CIME/CIME Found. Subser., Springer, Berlin, 1996. 



\bibitem[DV34]{[DV34]} Patrick du Val, On isolated singularities of surfaces which do not affect the conditions of adjunction, Parts I, II, III, \emph{Proc. Cambridge Phil. Soc.} 30 (1934), 453–459; 460–465; 483–491.


\bibitem[EP03]{[EP03]} Michael Entov, Leonid Polterovich, \emph{Calabi quasimorphism and quantum homology},  Int. Math. Res. Not. 2003, no. 30, 1635--1676. 



\bibitem[EP06]{[EP06]} Michael Entov, Leonid Polterovich, \emph{Quasi-states and symplectic intersections}, Comment. Math. Helv. 81 (2006), 75--99




\bibitem[EP08]{[EP08]} Michael Entov, Leonid Polterovich, \emph{Symplectic quasi-states and semi-simplicity of quantum homology}, Toric Topology (eds. M.Harada, Y.Karshon, M.Masuda and T.Panov), 47--70, Contemporary Mathematics 460, AMS, 2008.


\bibitem[EP09]{[EP09]} Michael Entov, Leonid Polterovich, \emph{Rigid subsets of symplectic manifolds}, Compos. Math. 145 (2009), no. 3, 773--826.





\bibitem[EliPol]{[EliPol]} Yakov Eliashberg, Leonid Polterovich, Symplectic quasi-states on the quadric surface and Lagrangian submanifolds, arXiv:1006.2501v1


\bibitem[Eva11]{[Eva11]} Jonathan Evans, Symplectic mapping class groups of some Stein and rational surfaces, \emph{Journal of Symplectic Geometry} (1)9 (2011), 45–82.



\bibitem[Eva]{[Eva]} Jonathan Evans, KIAS Lectures on Symplectic aspects of degenerations, arXiv:2403.03519v1



\bibitem[ES20]{[ES20]} Jonathan Evans, Ivan Smith, Bounds on Wahl singularities from symplectic topology, \emph{Algebr. Geom.} 7 (2020), no. 1, 59–85. 



\bibitem[EU21]{[EU21]} Jonathan Evans, Giancarlo Urz\'a, Antiflips, mutations, and unbounded symplectic embeddings of rational homology balls, \emph{Ann. Inst. Fourier (Grenoble)} 71 (2021), no. 5, 1807–1843. 


\bibitem[FOOO10]{[FOOO10]} Kenji Fukaya, Yong-Geun Oh, Hiroshi Ohta, Kaoru Ono, Lagrangian Floer theory on compact toric manifolds. I., \emph{Duke Math. J.} 151 (2010), no. 1, 23–174. 


\bibitem[FOOO19]{[FOOO19]} Kenji Fukaya, Yong-Geun Oh, Hiroshi Ohta, Kaoru Ono, Spectral invariants with bulk, quasi-morphisms and Lagrangian Floer theory, \emph{Mem. Amer. Math. Soc.} 260 (2019), no.1254


\bibitem[FU11]{[FU11]} Masahiro Futaki, Kazushi Ueda, Homological mirror symmetry for Brieskorn–Pham singularities, \emph{Selecta Mathematica} volume 17, pages 435–452 (2011)




\bibitem[Gr13]{[Gr13]} Mark Gross, Mirror symmetry and the Strominger-Yau-Zaslow conjecture, \emph{Current developments in mathematics} 2012, 133–191, Int. Press, Somerville, MA, 2013.



\bibitem[Gra]{[Gra]} Grassmannian.info, \url{https://www.grassmannian.info}

\bibitem[Gro61]{[Gro61]} Alexandre Grothendieck, \'El\'ements de g\'eom\'etrie alg\'ebrique. III. \'Etude cohomologique des faisceaux coh\'erents. I. 
\emph{Inst. Hautes \'Etudes Sci. Publ. Math.} No. 11 (1961)


\bibitem[HKK$^+$]{[HKK$^+$]} Kentaro Hori, Sheldon Katz, Albrecht Klemm, Rahul Pandharipande, Richard Thomas, Cumrun Vafa, Ravi Vakil, Eric Zaslow, Mirror symmetry. 
With a preface by Vafa. Clay Mathematics Monographs, 1. American Mathematical Society, Providence, RI; Clay Mathematics Institute, Cambridge, MA, 2003. xx+929 pp. ISBN: 0-8218-2955-6 




\bibitem[Hof93]{[Hof93]} Helmut Hofer, Estimates for the energy of a symplectic map, \emph{Comment. Math. Helv.} 68 (1993), no. 1, 48–-72. 


\bibitem[Kaw22]{[Kaw22]} Yusuke Kawamoto, Homogeneous quasimorphisms, $C^0$-topology and Lagrangian intersection, \emph{Comment. Math. Helv.} 97 (2022), no. 2, pp. 209–254




\bibitem[Kaw]{[Kaw]} Yusuke Kawamoto, Hofer geometry via toric degeneration, arXiv:2210.14726v2, https://doi.org/10.1007/s00208-023-02775-8, to appear in \emph{Math. Ann.} 



\bibitem[Kea15]{[Kea15]} Ailsa Keating, Lagrangian tori in four-dimensional Milnor fibres, \emph{Geom. Funct. Anal.} 25 (2015), no. 6, 1822--1901. 



\bibitem[Kea21]{[Kea21]} Ailsa Keating, Families of monotone Lagrangians in Brieskorn-Pham hypersurfaces, \emph{Math. Ann.} 380 (2021), no. 3-4, 975–1035. 



\bibitem[KM98]{[KM98]} J\'anos Koll\'ar, Shigefumi Mori, Birational geometry of algebraic varieties, With the collaboration of C. H. Clemens and A. Corti. Translated from the 1998 Japanese original. Cambridge Tracts in Mathematics, 134. Cambridge University Press, Cambridge, 1998. viii+254 pp.


\bibitem[KM94]{[KM94]} Maxim Kontsevich, Yuri Manin, Gromov--Witten classes, quantum cohomology, and enumerative geometry, \emph{Comm. Math. Phys.} 164 (1994), no. 3, 525–562. 


\bibitem[KMU13]{[KMU13]} Masanori Kobayashi, Makiko Mase, Kazushi Ueda, A note on exceptional unimodal singularities and K3 surfaces, \emph{Int. Math. Res. Not. IMRN} 2013, no. 7, 1665--1690. 




\bibitem[Laz04]{[Laz04]} Robert Lazarsfeld, Positivity in algebraic geometry. I. Classical setting: line bundles and linear series. \emph{Ergebnisse der Mathematik und ihrer Grenzgebiete.} 3. Folge 48. Berlin: Springer (2004).




\bibitem[Lec08]{[Lec08]} R\'emi Leclercq, Spectral invariants in Lagrangian Floer theory, \emph{J. Mod. Dyn.} 2 (2008) 249–286.  




\bibitem[LZ18]{[LZ18]} R\'emi Leclercq, Frol Zapolsky, Spectral invariants for monotone Lagrangians, \emph{J. Topol. Anal.} 10 (2018), no. 3, 627–700. 



\bibitem[Man99]{[Man99]}  Yuri I. Manin, Frobenius manifolds, quantum cohomology, and moduli spaces, American Mathematical Society Colloquium Publications, 47. American Mathematical Society, Providence, RI, 1999. 


\bibitem[Mil68]{[Mil68]} John Milnor, Singular points of complex hypersurfaces, \emph{Annals of Mathematics Studies}, No. 61 Princeton University Press, Princeton, N.J.; University of Tokyo Press, Tokyo 1968

\bibitem[MS04]{[MS04]} Dusa McDuff, Dietmar Salamon, \emph{J-holomorphic Curves and Symplectic Topology: Second Edition}, American Mathematical Society Colloquium Publications, 52. American Mathematical Society, Providence, RI, 2004 

\bibitem[Nik79]{[Nik79]} Vyacheslav V. Nikulin, Integer symmetric bilinear forms and some of their geometric applications (Russian), \emph{Izv. Akad. Nauk SSSR Ser. Mat.} 43 (1979), no. 1, 111--177, 238. 
 

\bibitem[Oh05]{[Oh05]} Yong-Geun Oh, \emph{Construction of spectral invariants of Hamiltonian paths on closed symplectic manifolds}, The breadth of symplectic and Poisson geometry, 525--570, Progr. Math., 232 2005


\bibitem[OT09]{[OT09]} Yaron Ostrover, Ilya Tyomkin, On the quantum homology algebra of toric Fano manifolds, \emph{Selecta Math. (N.S.)} 15 (2009), no. 1, 121--149. 


\bibitem[Per]{[Per]} Nicolas Perrin, Semisimple quantum cohomology of some Fano varieties, arXiv:1405.5914v1


\bibitem[Pin77]{[Pin77]} Henry Pinkham, Singularit\'es exceptionnelles, la dualit\'e \'etrange d'Arnold et les surfaces K−3, \emph{C. R. Acad. Sci. Paris S\'er.} A-B 284 (1977), no. 11, A615–A618. 



\bibitem[Pol01]{[Pol01]} Leonid Polterovich, The geometry of the group of symplectic diffeomorphisms, \emph{Lectures in Mathematics ETH Z\"urich. Birkh\"auser Verlag}, Basel, 2001.

\bibitem[PSS96]{[PSS96]} Sergey Piunikhin, Dietmar Salamon, Matthias Schwarz, Symplectic Floer–Donaldson theory and quantum cohomology. Contact and Symplectic Geometry. Cambridge University Press. (1996) pp. 171–200.


\bibitem[PS]{[PS]} Leonid Polterovich, Egor Shelukhin, Lagrangian configurations and Hamiltonian maps, arXiv:2102.06118v3



\bibitem[RT95]{[RT95]} Yongbin Ruan, Gang Tian, A mathematical theory of quantum cohomology, \emph{J. Differential Geom.} 42 (1995), no. 2, 259–367. 




\bibitem[Rei]{[Rei]} Miles Reid, The Du Val singularities $A_n$, $D_n$, $E_6$, $E_7$, $E_8$

\bibitem[San21]{[San21]} Fumihiko Sanda, Computation of quantum cohomology from Fukaya categories, \emph{Int. Math. Res. Not. IMRN} 2021, no. 1, 769--803. 


\bibitem[Sch00]{[Sch00]} Matthias Schwarz, \emph{On the action spectrum for closed symplectically aspherical manifolds}, Pacific J. Math. 193 (2000), no. 2, 419--461.


\bibitem[Sei97]{[Sei97]} Paul Seidel, Floer homology and the symplectic isotopy problem, Ph.D. thesis, Oxford University, 1997.


\bibitem[Sei99]{[Sei99]} Paul Seidel, Lagrangian two-spheres can be symplectically knotted, \emph{J. Differ. Geom.} 52, No. 1, 145-171 (1999).





\bibitem[Sei00]{[Sei00]} Paul Seidel, Graded Lagrangian submanifolds, \emph{Bull. Soc. Math. Fr.} 128, No. 1, 103-149 (2000).



\bibitem[Sei08]{Sei08} Paul Seidel, Lectures on four-dimensional Dehn twists, Symplectic 4-manifolds and algebraic surfaces, 231–267, Lecture Notes in Math., 1938, Springer-Verlag, Berlin, 2008





\bibitem[Sta21]{[Sta21]} Claudia Stadlmayr, Which simple singularities occur on del Pezzo surfaces?, \emph{\'Epijournal G\'eom. Alg\'ebrique} 5 (2021), Art. 17, 27 pp. 

\bibitem[Sun20]{[Sun20]} Yuhan Sun, $A_n$-type surface singularity and nondisplaceable Lagrangian tori, \emph{Internat. J. Math.} 31 (2020), no. 3



\bibitem[Ush11]{[Ush11]} Michael Usher, Deformed Hamiltonian Floer theory, capacity estimates and Calabi quasimorphisms, \emph{Geom. Topol.} 15 (2011), no. 3, 1313--1417. 



\bibitem[Ush13]{Ush13} Michael Usher, Hofer's metrics and boundary depth, \emph{Ann. Sci. Éc. Norm. Supér.} (4)46(2013), no.1, 57–128.


\bibitem[Vaf91]{[Vaf91]} Cumrun Vafa, Topological Landau-Ginzburg model, \emph{Modern Phys. Lett. A} 6 (1991), 337–346.


\bibitem[Vit92]{[Vit92]} Claude Viterbo, Symplectic topology as the geometry of generating functions, \emph{Math. Ann.} 292 (1992), no. 4, 685--710. 


\bibitem[Wit91]{[Wit91]} Edward Witten, Two-dimensional gravity and intersection theory on moduli space, \emph{Surveys Differential Geom.} 1 (1991), 243–310.


\end{thebibliography}
\end{document}